\newcommand{\Q}{\mathbb{Q}}
\newcommand{\Z}{\mathbb{Z}}
\newcommand{\F}{\mathbb{F}}
\newcommand{\G}{\mathfrak{G}}
\newcommand{\tc}{\textcolor}
\newcommand{\da}{\ar@{-->}}
\newcommand{\dar}{\ar@{.>}}
\newcommand{\lar}{\ar@{-}}
\newcommand{\del}{\partial}
\newcommand{\mf}{\mathfrak}
\newcommand{\bu}{\bullet}
\newtheorem{theorem}{Theorem}[section]
\newtheorem{lemma}[theorem]{Lemma}
\newtheorem{prop}[theorem]{Proposition}
\newtheorem{corollary}[theorem]{Corollary}
\newtheorem{definition}[theorem]{Definition}
\newtheorem{remark}[theorem]{Remark}
\newtheorem{question}[theorem]{Question}
\begin{document}

\author{David Krcatovich}
\title{The reduced knot Floer complex}
\date{\vspace{-3ex}}
\maketitle

\begin{abstract}
We define a \textquotedblleft reduced" version of the knot Floer complex $CFK^-(K)$, and show that it behaves well under connected sums and retains enough information to compute Heegaard Floer $d$-invariants of manifolds arising as surgeries on the knot $K$. As an application to connected sums, we prove that if a knot in the three-sphere admits an $L$-space surgery, it must be a prime knot. As an application to the computation of $d$-invariants, we show that the Alexander polynomial is a concordance invariant within the class of $L$-space knots, and show the four-genus bound given by the $d$-invariant of +1-surgery is independent of the genus bounds given by the Ozsv\'ath-Szab\'o $\tau$ invariant, the knot signature and the Rasmussen $s$ invariant.
\end{abstract}

\section{Introduction }
\label{intro}
In \cite{OSdisks}, Ozsv\'ath and Szab\'o define the Heegaard Floer homology groups (denoted $HF^{\infty}, HF^+, HF^-$ and $\widehat{HF}$) of a three-manifold, which arise as the homology of chain complexes (denoted $CF^{\infty}, CF^+, CF^-$ and $\widehat{CF}$) associated to a pointed Heegaard diagram. We will assume that the reader is familiar with their definitions. Further, given a knot $K$ in $S^3$, Ozsv\'ath and Szab\'o \cite{OSknot} define a $\Z\oplus \Z$-filtered chain complex $CFK^{\infty}(K)$, discovered independently by Rasmussen \cite{Rasmussenknot}, which is freely generated over the ring $\F[U,U^{-1}]$, where $\F$ is the field with two elements (the complex can be defined over $\Z[U,U^{-1}]$, but in this paper we will always assume coefficients in $\F$). In this paper, our primary interest will be in its subcomplex $CFK^-(K)$. Ignoring one of the $\Z$-filtrations on $CFK^-(K)$ gives the complex $CF^-(S^3)$, whose graded homology is isomorphic to $\F[U]$, supported in grading zero. Indeed, the $\Z\oplus \Z$-filtered complex contains all the information necessary to compute the Heegaard Floer homology of not merely $S^3$, but also manifolds arising from Dehn surgery along $K$ \cite{OSinteger,OSrational}.

Our method here will be to ignore the other $\Z$-filtration -- the one which measures the exponent of the variable $U$ -- and to simplify $CFK^-(K)$ to a $\Z$-filtered chain homotopy equivalent complex, which we will call the {\it{reduced}} $CFK^-(K)$, denoted $\underline{CFK}^-(K)$. Being $\Z$-filtered chain homotopy equivalent to $CFK^-(K)$, this complex will still allow us to compute $HFK^-(K)$, the homology of its associated graded object. But we will further require that the reduced complex keep track of the endomorphism given by multiplication by $U$, in a sense made precise in Section \ref{knotfloer}. 

Recall that in \cite[Theorem 7.1]{OSknot}, it was shown that there is a tensor product formula for the knot Floer complexes of connected sums, \[CFK^-(K_1\# K_2)\cong CFK^-(K_1)\otimes_{\F[U]} CFK^-(K_2).\] These tensor product complexes become difficult to work with, however, even for simple sums. The essence of the following theorem is that, while the reduced complex is smaller, is still has a simple tensor product formula under connected sums.

\begin{theorem}
\label{tensorintro}
If $K_1$ and $K_2$ are knots in $S^3$, then \[\underline{CFK}^-(K_1)\otimes_{\F [U]} CFK^-(K_2)\] is a $(\Z,U)$-filtered chain deformation retract of $CFK^-(K_1\# K_2)$.
\end{theorem}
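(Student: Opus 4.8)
The plan is to combine the Ozsv\'ath--Szab\'o tensor product formula with the functoriality of tensoring by a free module over $\F[U]$. By \cite[Theorem 7.1]{OSknot} we have an isomorphism of $(\Z,U)$-filtered complexes
\[CFK^-(K_1\# K_2)\cong CFK^-(K_1)\otimes_{\F[U]}CFK^-(K_2),\]
so it suffices to exhibit $\underline{CFK}^-(K_1)\otimes_{\F[U]}CFK^-(K_2)$ as a $(\Z,U)$-filtered chain deformation retract of $CFK^-(K_1)\otimes_{\F[U]}CFK^-(K_2)$. In other words, I would reduce the theorem to the purely algebraic assertion that applying the functor $-\otimes_{\F[U]}CFK^-(K_2)$ to a $(\Z,U)$-filtered chain deformation retraction still yields a $(\Z,U)$-filtered chain deformation retraction, and then invoke the deformation retraction of $CFK^-(K_1)$ onto $\underline{CFK}^-(K_1)$ produced in Section \ref{knotfloer}.

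Concretely, that construction supplies $(\Z,U)$-filtered chain maps $\iota\colon\underline{CFK}^-(K_1)\to CFK^-(K_1)$ and $\pi\colon CFK^-(K_1)\to\underline{CFK}^-(K_1)$ together with a $(\Z,U)$-filtered chain homotopy $H$ satisfying $\pi\iota=\mathrm{id}$ and $\partial H+H\partial=\mathrm{id}+\iota\pi$ (working over $\F$, where signs vanish). First I would tensor each of these with the identity on $CFK^-(K_2)$, obtaining $\iota\otimes\mathrm{id}$, $\pi\otimes\mathrm{id}$, and $H\otimes\mathrm{id}$. The retraction identity $(\pi\otimes\mathrm{id})(\iota\otimes\mathrm{id})=\mathrm{id}$ is immediate, and the homotopy identity
\[\partial(H\otimes\mathrm{id})+(H\otimes\mathrm{id})\partial=\mathrm{id}+(\iota\pi)\otimes\mathrm{id}\]
follows formally from the Leibniz rule for the tensor-product differential. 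The fact that these formal manipulations are legitimate rests on $CFK^-(K_2)$ being a free, hence flat, $\F[U]$-module, so that $-\otimes_{\F[U]}CFK^-(K_2)$ is additive and exact and carries the three maps above to maps with the asserted relations.

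The remaining point, and the only one requiring genuine care, is that the tensored maps are still $(\Z,U)$-filtered for the bifiltration on the tensor product. Here I would unwind the convention of Section \ref{knotfloer}, under which the Alexander filtration level and the $U$-filtration level of a tensor product are the convolutions (sums) of the levels on the two factors. Since $\iota$, $\pi$, and $H$ are each filtered on the first factor and we tensor with the identity on the second, the induced maps preserve, or for $H$ shift by the permitted amount, these convolved levels; this is a direct bookkeeping check once the tensor-filtration conventions are fixed. I expect this filtration accounting, rather than the algebra of the homotopy identities, to be the main thing to verify, and I anticipate it to be routine, depending only on the freeness of $CFK^-(K_2)$ over $\F[U]$ and on the compatibility of the isomorphism of \cite[Theorem 7.1]{OSknot} with \emph{both} filtrations.
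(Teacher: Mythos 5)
Your overall route coincides with the paper's: combine the Ozsv\'ath--Szab\'o connected sum formula with an algebraic statement that tensoring the retraction of $CFK^-(K_1)$ onto $\underline{CFK}^-(K_1)$ with $CFK^-(K_2)$ over $\F[U]$ preserves it (this is the paper's Lemma \ref{retracttensor}). But there is a genuine gap at precisely the step you treat as formal: the maps $\iota$, $\pi$ and especially the homotopy $H$ are \emph{not} $\F[U]$-module homomorphisms, so the expressions $\iota\otimes\mathrm{id}$, $\pi\otimes\mathrm{id}$ and $H\otimes\mathrm{id}$ are not well defined on the tensor product over $\F[U]$. The $U$-action on $\underline{CFK}^-(K_1)$ is the twisted map $U_1=\pi\circ(U+\del h_1U)\circ\iota$ of \eqref{U_i}; the paper establishes only $g_1U_1\sim Ug_1$ up to filtered homotopy, not equality, and the homotopy itself fails to commute with $U$ on the nose: since $h_1(U^{r_1+n}x_1)=U^ny_1$, one has $h_1(U\cdot U^{r_1-1}x_1)=y_1$ while $Uh_1(U^{r_1-1}x_1)=0$. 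Flatness and exactness of $-\otimes_{\F[U]}CFK^-(K_2)$, which you invoke, are beside the point: a functor can only be applied to morphisms of $\F[U]$-modules, and these maps are merely $\F$-linear.

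The paper's Lemma \ref{retracttensor} repairs exactly this. Freeness of $CFK^-(K_2)$ is used not for flatness but to choose an $\F[U]$-basis and to \emph{define} the induced maps by pushing all $U$-powers onto the first factor, e.g.\ $f(x\otimes U^ny):=f(U^nx)\otimes y$ for $y$ a basis element; the chain-map and homotopy identities are then verified by direct computation (with the characteristic-$2$ cancellations of pairs $h(U^nx)\otimes\del y$) rather than by a formal Leibniz argument, and one must separately check that the resulting equivalence still commutes with $U$ up to filtered homotopy (the map $\phi$ in the paper's proof). Moreover, the filtration bookkeeping you defer to is not free-standing: it requires the hypothesis that $U$ on the reduced complex drops the Alexander filtration by at least as much as $U$ does on the free factor, which is what makes formula \eqref{tensorfiltration} the correct description of the product filtration and the basis-dependent definitions filtered. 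So your instinct that the filtration check is the delicate point is only half right: well-definedness of the maps is the crux, and it is resolved by the same freeness you mention, used in a different way.
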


If we wish to connect sum a third knot, we can now reduce $CFK^-(K_1\# K_2)$ and iterate Theorem \ref{tensorintro}. As a result, this object can greatly simplify computations for sums of knots, and, as we will show, still retains enough information for these computations to be useful.

One application is the following. Recall that in \cite{OSlens}, Ozsv\'ath and Szab\'o define a rational homology three-sphere $Y$ to be an $L$-{\it{space}} if -- like a lens space -- it has the smallest possible Heegaard Floer homology. That is,
\begin{equation}
\widehat{HF}(Y,\mf t) \cong \F \text{ for all spin$^c$ structures } \mf t.
\label{lspacedef}
\end{equation}
A knot $K\subset S^3$ is called an $L$-space knot if $n$-surgery on $K$ is an $L$-space, for some positive integer $n$. Examples include positive torus knots (or any knot with a positive lens space surgery) and the $P(-2,3,2n+1)$ pretzel knots \cite{OSlens}, and more generally, a family of twisted torus knots \cite{Vafaeetwisted}. By combining work of Hedden and Hom \cite{HeddenCableII,HomCable}, the $(p,q)$-cable of a knot $K$ is an $L$-space knot if and only if $K$ is an $L$-space knot and \[\frac{q}{p}\geq 2g(K)-1,\] where $g$ is the Seifert genus. In \cite{Vafaeetwisted}, Vafaee asks if there are any other satellite operations which can produce $L$-space knots. We give a negative answer for the simplest satellite operation, connected sums. After describing the reduced complexes of $L$-space knots, we use Theorem \ref{tensorintro} to prove
\begin{theorem}
\label{primeintro}
A knot in $S^3$ which admits an $L$-space surgery must be a prime knot.
\end{theorem}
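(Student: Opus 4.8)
The plan is to prove the contrapositive via the tensor product formula for reduced complexes. Suppose $K = K_1 \# K_2$ is a nontrivial connected sum, meaning both $K_1$ and $K_2$ have positive Seifert genus. I want to show $K$ cannot admit an $L$-space surgery. The key is that admitting an $L$-space surgery is detected by the structure of $CFK^-(K)$: by Ozsv\'ath--Szab\'o's characterization, $K$ is an $L$-space knot if and only if $CFK^-(K)$ is chain homotopy equivalent to a particular ``staircase'' complex, equivalently $HFK^-(K)$ (or the reduced complex $\underline{CFK}^-(K)$) has a very rigid shape: in each Alexander grading the homology is at most one-dimensional, and the complex is a single staircase with steps determined by the gaps in the Alexander polynomial. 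So the strategy is to compute the reduced complex of the connected sum and show it is incompatible with this staircase shape.

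First I would recall the explicit description of the reduced complex $\underline{CFK}^-(K_i)$ of an $L$-space knot: it is a staircase, and in particular the generator of top Alexander grading, call it $x_0$, has a nontrivial horizontal differential (or is the target of a vertical one) reflecting a genuine ``step'' of positive length whenever $g(K_i) > 0$. If either $K_1$ or $K_2$ were not itself an $L$-space knot the argument would need the fact that being a summand of an $L$-space knot forces each summand to be an $L$-space knot --- this itself should follow from positivity/rigidity of the tensor product, so I would first establish that both summands must be $L$-space knots, then derive a contradiction from the tensor product of two nontrivial staircases.

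The central computation is then to apply Theorem \ref{tensorintro}: since $\underline{CFK}^-(K_1) \otimes_{\F[U]} CFK^-(K_2)$ is a filtered deformation retract of $CFK^-(K_1 \# K_2)$, I can read off the filtered homotopy type of $CFK^-(K)$ from the tensor product of the two staircases. Taking the tensor product of two staircases of positive length produces a complex that is \emph{not} a single staircase: one gets ``square'' subcomplexes, and after reducing one finds at least one Alexander grading in which the associated graded homology has dimension greater than one, or equivalently a generator that supports no nontrivial differential in a position forbidden by the staircase model. This violates the one-dimensionality condition characterizing $L$-space knots, giving the contradiction. Concretely, I would identify the two extremal steps of the two staircases and show their tensor product contributes a $2 \times 2$ box whose reduction leaves an ``extra'' generator, so $\dim HFK^-(K)$ in some grading exceeds what an $L$-space knot allows.

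The main obstacle I expect is bookkeeping the filtration levels and the $U$-action carefully enough to show that the offending box genuinely survives reduction --- that is, that no cancellation in the tensor product can collapse it back down to a staircase. This requires knowing precisely how the reduced complex encodes multiplication by $U$ (the $(\Z,U)$-filtration of Theorem \ref{tensorintro}) and verifying that the relevant differentials in the tensor complex are zero for grading reasons, so that the generators in question are genuine homology classes. I would handle this by choosing the top steps of each staircase, whose lengths are at least one since $g(K_i)>0$, and checking directly that the corresponding tensor generators form a box with trivial induced differential, hence contribute at least two-dimensional homology in a single Alexander grading; combined with the rigidity characterization of $L$-space knots, this completes the proof.
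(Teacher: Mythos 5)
Your plan has a genuine gap at its first step: the reduction to the case where both summands are $L$-space knots. You assert that if $K_1\# K_2$ admits an $L$-space surgery then each summand must itself be an $L$-space knot, saying this ``should follow from positivity/rigidity of the tensor product,'' but you give no argument, and no such easy argument is available -- that claim is essentially as strong as the theorem you are trying to prove. Without it, what your sketch actually establishes is only that the sum of two nontrivial \emph{$L$-space knots} is not an $L$-space knot, which the paper explicitly flags (in the introduction) as the easy, already-apparent observation: the staircase characterization of \cite[Theorem 1.2]{OSlens} is visibly not preserved under tensor products. The content of Theorem \ref{primeintro} is precisely that the argument must handle \emph{arbitrary} nontrivial summands, and the paper's proof does so without assuming anything about their complexes beyond nontriviality: by \cite[Proposition 11.52]{LOT}, any nontrivial knot has a horizontally simplified basis containing at least one horizontal arrow $\del_H(y_1)=U^{r_1}x_1$ with $r_1>0$, and that single arrow per summand is all that is used.

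The paper's mechanism is also different from your ``surviving $2\times 2$ box'' criterion, in a way that matters. Choosing notation so that $r_1\leq R_1$ (where $\del_H(Y_1)=U^{R_1}X_1$ in $CFK^-(K_2)$), one first reduces $K_1$; by \eqref{bentU} the induced $U$ on $\underline{CFK}^-(K_1)$ drops the Alexander filtration by at least two at $U^{r_1-1}x_1$, so $F_1(U^{R_1}x_1)<F_1(x_1)-R_1$. In $\underline{CFK}^-(K_1)\otimes_{\F[U]}CFK^-(K_2)$ this bends the would-be horizontal arrow from $x_1\otimes Y_1$ downward, so $\del_H(x_1\otimes Y_1)=0$ and $x_1\otimes Y_1$ survives every further reduction, as does $x_1\otimes X_1$; since $M(x_1\otimes Y_1)=M(x_1\otimes X_1)+1-2R_1$, the reduced complex of the sum has two generators of \emph{opposite Maslov parity}, contradicting Corollary \ref{lspacered} (an $L$-space knot's reduced complex has exactly one generator in each even grading $-2i$, $i\geq 0$, and nothing else). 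This parity criterion is immediate once one generator of odd grading survives, whereas your proposed criterion -- rank at least two of $HFK^-$ in a single Alexander grading -- would force you to rule out all possible cancellations of the box, exactly the ``main obstacle'' you name but do not resolve; note also that your choice of ``top steps of each staircase'' omits the comparison $r_1\leq R_1$ that determines which factor must be reduced for the bending argument to go through.
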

\noindent We should remark here that it is easy to see that the sum of two non-trivial $L$-{\it{space knots}} is not an $L$-space knot; for example, by observing that the characterization of knot Floer complexes of $L$-space knots given in \cite[Theorem 1.2]{OSlens} is not preserved under tensor products. However, our reduced complex will make this statement just as apparent for the sum of {\it{any}} non-trivial knots.

Our remaining applications will pertain to the {\it{correction terms}}, or $d$-{\it{invariants}}. In \cite[Definition 4.1]{OSabsgr}, Ozsv\'ath and Szab\'o define the $d$-invariant of a spin$^c$ rational homology three-sphere $(Y,\mf t)$ as
\begin{equation}
d(Y,\mf t) = \min \{ \widetilde{gr}(x) |\ x \text{ in the image of } \pi_*:HF^{\infty}(Y,\mf t)\to HF^+(Y,\mf t) \},
\label{ddef}
\end{equation}
where $\widetilde{gr}$ is the absolute lift of the homological $\Z$-grading to $\Q$. We will work with the equivalent definition
\begin{equation}
d(Y,\mf t)=\max \{\widetilde{gr}(x) |\ x \in HF^-(Y,\mf t), x \text{ is not } U\text{-torsion}. \}
\label{ddef2}
\end{equation}
\begin{remark}
Our convention which makes these definitions agree is slightly different than that of Ozsv\'ath and Szab\'o - we assume {\it{both}} $CF^+$ and $CF^-$ to contain the element 1 in $\F[U,U^{-1}]$. This will be convenient for computing correction terms, but has the drawback that $CF^+$ is not quite the quotient complex corresponding to the subcomplex $CF^-$. 
\label{convention}
\end{remark}
These invariants have been used to answer questions related to Dehn surgery \cite{DoigFinite,DoigObstructing,NiWuCosmetic}, the smooth knot concordance group \cite{ChaRuberman,HLR,JabukaNaik,ManolescuOwens} and various notions of genera of knots \cite{Batson,GilmerLivingston,NiWu}.

The property of \textquotedblleft keeping track of multiplication by $U$" which we ascribed to the reduced complex above is essential, since it will allow us to compute $d$-invariants.

Given a knot $K\subset S^3$, one can consider $S^3_1(K)$, the rational homology sphere arising from Dehn surgery along $K$ with slope 1. This has only one spin$^c$ structure, so we can define
\begin{equation}
d_1(K) = d(S^3_1(K)).
\label{d1def}
\end{equation}
This invariant of $K$ was studied by Peters in \cite{Peters}, where he showed, in particular, that $d_1(K)$ is a concordance invariant of $K$, and that it gives a lower bound on the smooth four-dimensional genus of $K$, \[0\leq -d_1(K) \leq 2g_4(K).\] He also showed how this invariant can be computed from $CFK^{\infty}(K)$. There is, of course, another four-genus bound which is defined in terms of knot Floer homology, namely the Ozsv\'ath-Szab\'o $\tau$ invariant. In comparing the computation of these two invariants, Peters poses the question:

\begin{question}
What is the relation between $d_1$ and $\tau$? Is it necessarily true that \[|d_1(K)| \leq 2|\tau(K)| ?\]
\label{petersquestion}
\end{question}
Of course, if this were true, the four-genus bound provided by $d_1$ would be rather ineffective. After all, $\tau$ can be computed just as easily from the knot Floer complex, and has the further advantage of being additive under connected sums. It is fortunate, in this sense, that we can give a negative answer\footnote{The first knot in a family of  knots described in Section \ref{applications} which provides the negative answer was actually alluded to by Peters later in his paper.} to Question \ref{petersquestion}, and show that, in fact, $d_1$ can be quite useful as a four-genus bound. Denoting the mirror image of a knot $K$ by \textquotedblleft $-K$", we first show that

\begin{theorem}
\label{alexanderconcordanceintro}
Suppose that $K_1$ and $K_2$ are two knots in $S^3$ which admit $L$-space surgeries. If $$d_1(K_1\#-K_2)=d_1(-K_1\#K_2)=0,$$ then $$\Delta_{K_1}(T)=\Delta_{K_2}(T).$$ In particular, the Alexander polynomial is a concordance invariant of $L$-space knots.
\end{theorem}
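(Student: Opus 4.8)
The plan is to use Theorem \ref{tensorintro} to replace the unwieldy complex $CFK^-(K_1\#-K_2)$ by the much smaller $\underline{CFK}^-(K_1)\otimes_{\F[U]}CFK^-(-K_2)$, reduce the second factor as well, and then read off $d_1$ from the resulting tensor product of two staircases. First I would recall the structure theorem for $L$-space knots: by \cite[Theorem 1.2]{OSlens} the complex of such a knot is a \emph{staircase}, and hence so is its reduction $\underline{CFK}^-(K)$; this staircase is determined by, and determines, the Alexander polynomial $\Delta_K$. It is convenient to encode this data through the torsion coefficients $t_j(K)=\sum_{i\geq 1} i\,a_{j+i}(K)$, where $a_k(K)$ is the coefficient of $T^k$ in $\Delta_K$, since one recovers $\Delta_K$ from them by the second-difference identity $a_j=t_{j-1}-2t_j+t_{j+1}$, and since for an $L$-space knot $t_j(K)$ agrees with the surgery invariant $V_j(K)$.

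Because the complex of the mirror $-K_2$ is the dual staircase, the reduced complex of $K_1\#-K_2$ is the tensor product of the staircase of $K_1$ with the dual staircase of $K_2$. The next step is to compute $d_1$ from this complex. Recall from \cite{Peters} that $d_1(K)=-2V_0(K)$ with $V_0\geq 0$, and that $V_0$ is extracted from the $U$-module structure of the complex --- precisely the data that $\underline{CFK}^-$ retains. The heart of the argument is the lower bound
\[V_0(K_1\#-K_2)\;\geq\; t_j(K_2)-t_j(K_1)\qquad\text{for every } j\geq 0,\]
which I would prove by exhibiting, for each $j$, an explicit non-$U$-torsion class in the relevant surgery complex built from the tensor of the two staircases, whose grading realizes the asserted bound. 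Matching the $j$-th step of the $K_2$-staircase against the $j$-th step of the dualized $K_1$-staircase is what produces such a class, and keeping track of the resulting power of $U$ is exactly where the reduced complex is needed.

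Granting the bound, the conclusion is a short squeeze. The hypothesis $d_1(K_1\#-K_2)=0$ gives $V_0(K_1\#-K_2)=0$, so the bound forces $t_j(K_1)\geq t_j(K_2)$ for all $j$. Applying the same reasoning to $-K_1\#K_2=K_2\#-K_1$ together with $d_1(-K_1\#K_2)=0$ yields $t_j(K_2)\geq t_j(K_1)$ for all $j$. Hence $t_j(K_1)=t_j(K_2)$ for every $j$, and the second-difference identity gives $\Delta_{K_1}=\Delta_{K_2}$. For the final assertion, if $K_1$ and $K_2$ are concordant $L$-space knots then $K_1\#-K_2$ and $-K_1\#K_2$ are both slice, hence concordant to the unknot; since $d_1$ is a concordance invariant with $d_1=0$ on the unknot \cite{Peters}, both hypotheses hold automatically and the Alexander polynomials agree.

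The step I expect to be the main obstacle is the lower bound on $V_0(K_1\#-K_2)$. One must control $V_0$ (equivalently $d_1$) of a tensor product of two staircases of possibly different lengths and step sizes, a setting in which the naive additivity of $V_0$ fails. The reduced complex keeps the computation finite and explicit, but identifying the correct non-torsion generator that realizes $t_j(K_2)-t_j(K_1)$, and verifying that it survives to homology, is the delicate part; everything after that bound is formal.
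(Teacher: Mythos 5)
Your overall strategy --- reduce $CFK^-(K_1)$, tensor with the dual staircase of $K_2$, and read $d_1$ off the resulting complex --- is the same as the paper's, and your endgame (squeezing from the two mirror hypotheses, then recovering $\Delta_K$ from the torsion coefficients via $a_j = t_{j-1}-2t_j+t_{j+1}$) is sound. But the entire weight of the argument rests on the displayed inequality $V_0(K_1\#-K_2)\geq t_j(K_2)-t_j(K_1)$, and this is where there is a genuine gap, in two respects. First, as stated the inequality is false: take $K_1$ the unknot and $K_2=T(2,3)$. Then $K_1\#-K_2=-T(2,3)$ has $d_1=0$, i.e.\ $V_0=0$, while $t_0(T(2,3))-t_0(\text{unknot})=1-0=1$, violating your bound at $j=0$. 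The inequality you need has the opposite direction, $V_0(K_1\#-K_2)\geq t_j(K_1)-t_j(K_2)$; your squeeze still goes through after swapping which hypothesis produces which family of inequalities, so the sign is repairable --- but it signals that the bound was never actually tested.

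Second, and more seriously, you offer no proof of the (corrected) bound: you defer exactly the step where all the work lies, namely exhibiting the non-torsion class of the right grading. The paper's proof shows why this is delicate. The relevant Maslov-grading-zero generators are $U^{\beta_1+\cdots+\beta_{k+1}}x_n\otimes y_{-m+2k+2}$, and their Alexander gradings only collapse to a clean difference of staircase data after one knows $U^{\beta_1+\cdots+\beta_{k+1}}x_n=U^{\beta_{k+1}}x_{n-2k}$, i.e.\ after $\alpha_i=\beta_i$ for $i\leq k$ has already been established; for this reason the paper interleaves the two hypotheses $d_1(K_1\#-K_2)=0$ and $d_1(-K_1\#K_2)=0$ at \emph{every} inductive step. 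Your plan instead asserts the strictly stronger claim that a single hypothesis forces the $t_j$-inequalities for all $j$ simultaneously. That statement is plausibly true (it is in the spirit of the Borodzik--Livingston gap-function computations which the paper cites only as an alternative viewpoint), but it does not follow from anything you wrote, and proving it would amount to redoing, in torsion-coefficient language, the grading bookkeeping coming from Proposition \ref{lspacechar}, Corollary \ref{lspacered} and Remark \ref{upsidedown} that constitutes the paper's actual proof. As it stands, the proposal is a correct frame around a missing core.
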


Following from this, we have the corollary

\begin{corollary}
If $K_1$ and $K_2$ are two $L$-space knots whose Alexander polynomials are distinct but have the same degree, then $$\tau(K_1\# -K_2) = \tau (-K_1\# K_2) =0,$$ but either $$d_1(K_1\# -K_2)\neq 0 \ \ \ \textrm{ or } \ \ \ d_1(-K_1\# K_2)\neq 0.$$ In particular, $d_1$ gives a stronger four-genus bound than $\tau$ for $K_1\# -K_2$ and its mirror.
\end{corollary}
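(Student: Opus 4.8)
The plan is to deduce both halves of the statement from additivity properties of $\tau$ together with the contrapositive of Theorem~\ref{alexanderconcordanceintro}, and then to compare the four-genus bounds they produce. Almost everything here is formal once the right facts about $L$-space knots are in place; the only ingredient needing genuine care is the identification of the degree of the Alexander polynomial with the genus, which is what upgrades the hypothesis "same degree" to the conclusion "same $\tau$."

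First I would handle the $\tau$ statement. I would use two standard properties of the Ozsv\'ath--Szab\'o concordance invariant, namely additivity under connected sum and $\tau(-K)=-\tau(K)$ under mirroring, so that $\tau(K_1\#-K_2)=\tau(K_1)-\tau(K_2)$ and $\tau(-K_1\#K_2)=\tau(K_2)-\tau(K_1)$. The key input is that for an $L$-space knot one has $\tau(K)=g(K)$ and that the top degree of the Alexander polynomial equals the Seifert genus, $\deg\Delta_K=g(K)$; both follow from the staircase description of $CFK^{\infty}$ of an $L$-space knot (equivalently, from the fact that such knots are fibered). Since $K_1$ and $K_2$ are assumed to have Alexander polynomials of the same degree, this forces $g(K_1)=g(K_2)$, hence $\tau(K_1)=\tau(K_2)$, and therefore $\tau(K_1\#-K_2)=\tau(-K_1\#K_2)=0$.

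Next, for the $d_1$ statement I would invoke Theorem~\ref{alexanderconcordanceintro} in contrapositive form. Both $K_1$ and $K_2$ admit $L$-space surgeries by hypothesis, so the theorem applies and tells us that if $d_1(K_1\#-K_2)=d_1(-K_1\#K_2)=0$ then $\Delta_{K_1}=\Delta_{K_2}$. Since we assume $\Delta_{K_1}\neq\Delta_{K_2}$, at least one of $d_1(K_1\#-K_2)$ and $d_1(-K_1\#K_2)$ must be nonzero, which is exactly the claimed alternative.

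Finally I would compare the bounds. Writing $J=K_1\#-K_2$, whose mirror is $-K_1\#K_2$, Peters' inequality $0\le -d_1\le 2g_4$ shows that $d_1\le 0$ always, so a nonzero value is strictly negative and yields a strictly positive lower bound $g_4\ge -d_1/2>0$. By the previous step this positive bound holds for $J$ or for its mirror, and since the smooth four-genus is preserved under mirroring, $g_4(J)=g_4(-K_1\#K_2)$; so in either case we obtain a nontrivial lower bound on this common four-genus. The $\tau$ bound $|\tau(J)|\le g_4(J)$, on the other hand, is vacuous here because $\tau(J)=\tau(-J)=0$. Hence $d_1$ gives a strictly stronger four-genus bound than $\tau$ for $J$ and its mirror. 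As noted, I expect no real obstacle beyond correctly citing the genus--degree and $\tau=g$ identities for $L$-space knots, on which the whole argument rests.
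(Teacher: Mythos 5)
Your proposal is correct and follows essentially the same route as the paper, which states the corollary as an immediate consequence of Theorem \ref{alexanderconcordance} after observing (via Proposition \ref{lspacechar}) that $\tau(K)=\deg\Delta_K(T)$ for $L$-space knots, so that $\tau(K_1\#-K_2)=0$ is precisely the statement that the Alexander polynomials have equal degree, and then taking the contrapositive of the theorem together with Peters' bound $0\le -d_1\le 2g_4$. Your detour through $\tau(K)=g(K)$ and $\deg\Delta_K=g(K)$ is an equivalent packaging of the same staircase fact, so there is nothing substantive to flag.
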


In addition to $\tau$, two other concordance invariants which have proven to give useful four-genus bounds are the knot signature $\sigma$ and the Rasmussen $s$ invariant which comes from Khovanov homology \cite{RasmussenSlice}. To strengthen the result of this Corollary, and show the effectiveness of $d_1$ as a smooth four-genus bound, we give examples of knots for which $\tau(K)=\sigma(K)=s(K)=0$, but $|d_1(K)|$ is arbitrarily large. \\

\noindent{\bf{Organization.}} In Section \ref{prelim}, we begin by introducing the algebraic framework which will be necessary. In Section \ref{knotfloer}, we review the definition and properties of the knot Floer complex, and define its reduced form. Subsection \ref{sums} explains how the tensor product formula extends to the reduced complex. In Section \ref{applications}, we apply the theory to $L$-space knots, prove Theorems \ref{primeintro} and \ref{alexanderconcordanceintro}, and provide examples.\\

\noindent {\bf{Acknowledgements.}} The author would like to thank his advisor, Matthew Hedden, for his insight and patience. Also, Chuck Livingston, Maciej Borodzik and Margaret Doig helped clarify some of these ideas through their discussions, and Faramarz Vafaee provided helpful comments on an earlier draft. The author was partially supported by National Science Foundation RTG Grant DMS 0739208.

\section{Algebraic preliminaries}
\label{prelim}

Throughout this paper, we will be working with coefficients in the field with two elements, which we will denote $\F$. Given a chain complex $(C,\del)$, and a partially ordered set $S$, a {\it{(decreasing) $S$-filtration}} on $C$ is a function $F:C\to S$ such that, for all $x$ and $y$ in $C$, 
\begin{equation}
F(x+y)\leq F(x)  \ \ \ \text{or}\ \ \  F(x+y)\leq F(y),
\label{max}
\end{equation} and $$F (\del x) \leq F(x).$$ To satisfy \eqref{max}, we will further require that $S$ contains an element \textquotedblleft $-\infty$", satisfying \( -\infty \leq x\) for all \(x\in S\), and that \( F^{-1}(-\infty)=0\).

These properties ensure that the sets $$F_i:=\{ x\in C | F(x)\leq i\}$$ are subcomplexes, with $$\cdots F_{i-1}\subseteq F_i\subseteq F_{i+1} \cdots$$  We call $(C,\del,F)$ a {\it{filtered complex}}. To simplify notation, we will omit $\del$ or $F$ when it does not cause confusion to do so. The filtration is said to be {\it{ bounded (above)}} if $F_i=C$ for sufficiently large $i$. 

The complexes dealt with here will be filtered by $\Z$ or by $\Z \oplus \Z$  (each including \(-\infty\)), with partial ordering $$(i,j)\leq (i',j') \ \ \ \text{iff}\ \ \ i\leq i' \ \ \text{and} \ \ j\leq j',$$ and all filtered complexes will be bounded. 

If $(C,\del,F)$ and $(C',\del ',F')$ are filtered complexes (filtered by the same partially ordered set), then a map $f:C\to C'$ is a {\it{filtered map}} if, for all $x\in C$, $$F'(f(x)) \leq F(x).$$

Suppose $(C,\del )$ and $(C',\del ')$ are filtered chain complexes. We will say that $C$ and $C'$ are {\it{ filtered chain homotopy equivalent}} if there exist filtered chain maps $f:C\to C'$ and $g:C'\to C$, and filtered chain homotopies $h:C\to C$ and $h':C'\to C'$ such that $$f\circ g = I_{C'} + \del'h' + h'\del' \ \ \ \text{and}\ \ \ g\circ f =I_C + \del h+h\del .$$  We will further say that $C'$ is a {\it{filtered chain deformation retract of $C$}} if the chain homotopy $h'$ is trivial; i.e., if $$f\circ g=I_{C'}.$$

We present here a prototypical example of what will follow. Figure \ref{example} represents a $\Z$-filtered complex $C$ generated over $\F$, where the vertical height of each generator corresponds to its filtration level. We will denote by \( \del (x,y)\) the coefficient of $y$ in $\del x$. If, for example, \( \del (a,c)=1\), we draw an arrow from $a$ to $c$. Intuitively, we can \textquotedblleft cancel" an arrow which is horizontal, while preserving the filtered chain homotopy type of $C$. For example, canceling the arrow from $b$ to $c$  gives a complex $C'$ in the following way. The generators are obtained by deleting the generators $b$ and $c$, and the differential on $C'$ is given by \[ \del' (x,y)=\del(x,y) +\del(x,c)\del(b,y).\] In other words, if an arrow went from $x$ to $c$, and another arrow went from $b$ to $y$, we add an arrow going from $x$ to $y$. To put it precisely, we make the filtered change of basis $c \mapsto \del b$, then take the quotient of $C$ by the acyclic subcomplex which is generated by $b$ and $\del b$. If we define a homomorphism $h:C\to C$ by setting $h(c)=b$ and $h(x)=0$ for all other generators (i.e., $h$ is the inverse of the horizontal arrow we are canceling), then $C'$ is seen to be a filtered chain deformation retract of $C$, via the maps 
\[ f=\pi \circ  (I+\del h), \ \ \ g=(I+h\del)\circ \iota \]
and the chain homotopy $h$. Details are explained well in \cite[Lemma 4.1]{HeddenNi} and \cite[Section 5.1]{Rasmussenknot}, and will be worked out in Section \ref{knotfloer}.

\begin{figure}
$$\xymatrix{
&&& \underline{F} &&&&& \underline{F'} \\
&& a \ar[dll] \ar[dl] & 1 &&&& a \ar[dl] \ar[dd] & 1 \\
d & c & b \ar[l] \ar[d] & 0 & \ar@{~>}[r] & & d && 0 \\
&& e & -1 &&&& e & -1\\
& C &&&&&& C'
}$$

\caption{Canceling the horizontal arrow from $b$ to $c$ yields the filtered chain deformation retract $C'$. The additional arrow from $a$ to $e$ is obtained by \textquotedblleft traveling backward" through the canceled arrow.}
\label{example}
\end{figure}
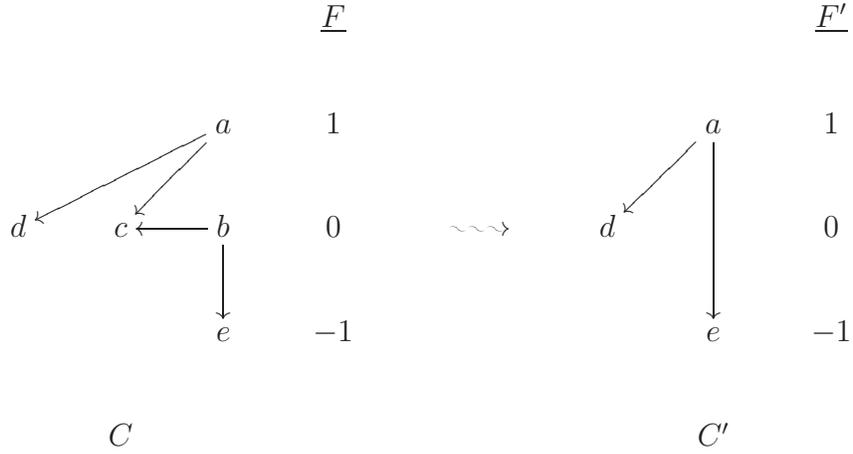

 A vertical arrow could similarly be canceled, but the map $h$ would not be filtered, and so the result would be a chain homotopy equivalent, but not filtered chain homotopy equivalent, complex. The idea of reduction, put simply, is that as long as we have horizontal arrows (terms in the differential which preserve the filtration level), we can reduce the number of generators of a chain complex, while maintaining its filtered chain homotopy type.

Suppose $C$ is a $\Z$-filtered complex which comes equipped with a specified filtered chain map $U$. Then we will call the pair $(C,U)$ a $(\Z,U)$-{\it{filtered chain complex}}. It will be convenient to view such complexes as modules over $\F[U]$, so that we will often refer to composition with the map $U$ as \textquotedblleft multiplication by $U$". Two $(\Z,U)$-filtered chain complexes $(C,U)$ and $(C',U')$ will be called $(\Z,U)$-{\it{filtered chain homotopy equivalent}} if they are filtered chain homotopy equivalent, and the maps $f$ and $g$ respect multiplication by $U$, in the sense that \[ gU'\sim Ug \ \ \ \ \text{and} \ \ \ \ \ fU\sim U'f,\]
where $\sim$ signifies that the maps are filtered chain homotopic.

\begin{remark}
\label{retractcomp}
The notion of $(\Z,U)$-filtered chain homotopy equivalence is an equivalence relation.
\end{remark}

Finally, we will also want to consider complexes which arise as products of other complexes. If $(C_1,F_1)$ and $(C_2,F_2)$ are two $\Z$-filtered chain complexes which are freely generated over $\F$, we can define a filtration $F^{\times}$ on the direct product $C_1\times C_2$ in the following way. Suppose that $\{x_i\}$ and $\{y_i\}$ are generating sets for $C_1$ and $C_2$, respectively, so that any element in $C_1\times C_2$ can be written as $\sum_{i,j} \varepsilon_{ij}x_iy_j$, where $\varepsilon_{ij}\in \F$, and all but finitely many of the $\epsilon_{ij}$ are zero. Then,
\[F^{\times}\Bigg(\sum_{i,j} \varepsilon_{ij}x_iy_j \Bigg) = \max \{ F_1(x_i)+F_2(y_j) | \varepsilon_{ij}=1 \}\]
defines a filtration on $C_1\times C_2$.

If each $C_i$ comes with a filtered chain map $U_i$, they can naturally be thought of as modules over $\F[U]$, so that we will refer to the maps $U_i$ as \textquotedblleft multiplication by $U$". In this case, we can also consider the complex \[C_1\otimes_{\F[U]} C_2. \] An element in $C_1\otimes _{\F[U]} C_2$ is an equivalence class of elements in $C_1\times C_2$, and we define a filtration $F$ on the tensor product by simply taking the minimum over each equivalence class. That is, for any $s\in C_1\times C_2$, 
\begin{equation}
F([s]) = \min \left\{ F^{\times}(t) \big| t\in [s] \right\}.
\label{tensorfiltration1}
\end{equation}
Actually, for the tensor products we will consider in this paper, we can describe the product filtration more concretely. In our case, we will consider a $(\Z,U)$-filtered complex $C_1$ for which $U$ is not necessarily homogeneous, but always decreases the filtration level by at least 1; that is, 
\begin{equation}
F_1(Ux)\leq F_1(x)-1 \ \ \text{for all } x \in C_1.
\label{filtge1}
\end{equation}
We will then consider a second $(\Z,U)$-filtered complex, $C_2$, on which $U$ is homogeneous of degree 1, so \[F_2(Ux)=F_2(x)-1 \ \ \text{for all } x,\] and further, $C_2$ is free when viewed as an $\F[U]$-module. In this case, for a homogeneous element $x\in C_1$ and a generator $y\in C_2$, the filtration on $C_1\otimes_{\F[U]} C_2$ given by \eqref{tensorfiltration1} is
\begin{equation}
\label{tensorfiltration}
F(x\otimes U^ny) = F_1(U^nx)+F_2(y),
\end{equation}
for all $n\geq 0$. In other words, to avoid ambiguity, we can think of $U$ as always being applied to the first component (the module on which $U$ is not necessarily homogeneous). In this case, we can prove the following lemma, which we will use in Subsection \ref{sums}.

\begin{lemma}
\label{retracttensor}
Suppose that there is a $(\Z,U)$-filtered chain homotopy equivalence between $C_1$ and $C_1'$, and $C_2$ is a $(\Z,U)$-filtered chain complex which is freely generated over $\F [U]$. Suppose also that the maps $U$ on $C_1$ and $C_1'$ always decrease the filtration level by at least $k$, and the map $U$ on $C_2$ is homogeneous of degree $k$. Then there is a $(\Z,U)$-filtered chain homotopy equivalence between $C_1\otimes_{\F[U]} C_2$ and $C_1'\otimes_{\F[U]} C_2$.
\end{lemma}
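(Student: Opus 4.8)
The plan is to carry the entire homotopy-equivalence datum across the functor $-\otimes_{\F[U]}C_2$. Write the given $(\Z,U)$-filtered chain homotopy equivalence as filtered chain maps $f\colon C_1\to C_1'$ and $g\colon C_1'\to C_1$, filtered homotopies $h,h'$ with $gf\sim I$ and $fg\sim I$, together with the filtered homotopies $\phi,\psi$ recording the $U$-equivariance, so that $fU+U'f=\partial'\phi+\phi\partial$ and $gU'+Ug=\partial\psi+\psi\partial$ (signs being irrelevant over $\F$). Since $C_2$ is free over $\F[U]$, I would fix an $\F[U]$-basis $\{y_j\}$ and use \eqref{tensorfiltration} to present $C_1\otimes_{\F[U]}C_2$ concretely as the filtered $\F$-complex whose underlying space is $\bigoplus_j C_1$ and whose differential is $\partial\otimes I$ plus a twist $\tau$ encoding $\partial_{C_2}$: if $\partial_{C_2}y_j=\sum_k q_{kj}(U)\,y_k$, then $\tau$ applies $q_{kj}(U)$ to the $C_1$-summands. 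The natural candidate maps are $f\otimes I$, $g\otimes I$, $h\otimes I$, $h'\otimes I$.

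The main obstacle is that $f$ and $g$ are $\F[U]$-linear only up to homotopy, so none of these tensored maps is automatically well defined over $\F[U]$, and in the $\bigoplus_j C_1$ model $f\otimes I$ fails to commute with the twist $\tau$ exactly by the $U$-equivariance defect. The remedy is to correct each map by terms built from $\phi$ and $\psi$: one inserts the homotopy at every occurrence of $U$ appearing in $\tau$, using that a homotopy witnessing $fU^m\sim U'^mf$ is assembled from $\phi$ as $\sum_{i=0}^{m-1}U'^{i}\phi\,U^{m-1-i}$ and extended additively over each polynomial $q_{kj}$ (no correction is needed for the filtration-preserving, constant, terms of $q_{kj}$, since $f$ is already $\F$-linear). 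I would organize this cleanly by building $C_2$ from its $\Z$-filtration as an iterated filtered mapping cone of multiplication maps and inducting: the inductive step is precisely the classical statement that a homotopy-commutative square whose vertical maps are the relevant multiplications, whose horizontal maps are $f$ (resp.\ $g$), and whose homotopy is $\phi$ (resp.\ $\psi$) induces a filtered homotopy equivalence of mapping cones, the induced map being the triangular $\begin{pmatrix} f & \phi \\ 0 & f\end{pmatrix}$-type map. The base case $C_2=\F[U]$ gives $C_1\otimes_{\F[U]}\F[U]\cong C_1$, for which the statement is the hypothesis.

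It then remains to verify three things, none conceptually hard but the first two requiring care: that the corrected maps are genuine \emph{filtered} chain maps, that they compose to the identity up to filtered homotopy, and that they still respect $U$ up to filtered homotopy. Filtration is preserved because each correction strictly lowers the $C_1$-filtration --- $\phi$ and $\psi$ are filtered while every inserted $U$ drops the filtration by at least $k$ --- so the corrections lie strictly below the leading $f\otimes I$ and $g\otimes I$ terms and cannot spoil \eqref{tensorfiltration}. I expect the genuinely delicate point to be checking that a single layer of correction by $\phi$ (and $\psi$) suffices, i.e.\ that the second-order defects produced when $\tau$ meets a correction term cancel; this is exactly where the relation $\partial_{C_2}^{2}=0$, equivalently the vanishing of the length-two compositions $\sum_l q_{kl}(U)q_{lj}(U)$, is used, and it is what makes the mapping-cone induction close up without any higher homotopies. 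Composing the equivalences produced at the successive filtration levels of $C_2$ then yields the desired $(\Z,U)$-filtered chain homotopy equivalence between $C_1\otimes_{\F[U]}C_2$ and $C_1'\otimes_{\F[U]}C_2$. (Alternatively, one could first reduce the abstract equivalence to a composition of elementary arrow-cancellations, whose maps are honestly $\F[U]$-linear and filtered, and tensor each cancellation with the free module $C_2$; the correction terms above are what make the two routes agree.)
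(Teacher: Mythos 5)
Your proposal is correct in substance, but it takes a genuinely different and considerably heavier route than the paper. The paper's proof is precisely the ``naive'' one you set aside at the start: since $C_2$ is free over $\F[U]$, every element of the product has a canonical form with all $U$-powers pushed onto the first factor, and the paper simply defines $f(x\otimes U^ny)=f(U^nx)\otimes y$ (likewise for $g$, the homotopies $h,h'$, and at the end the equivariance homotopy $\phi$), extending bilinearly over $\F$; well-definedness over $\F[U]$ is not an issue because a $(\Z,U)$-filtered equivalence only requires $\F$-linear filtered maps that commute with $U$ up to homotopy. The identities $g\circ f=I+\del h+h\del$, filteredness, and $fU\sim Uf$ are then verified by short direct computations, with no correction terms and no induction. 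What your route buys is robustness at exactly the point you flag: your corrected $F$ is a chain map on the nose, the second-order defect canceling because $U^m\mapsto\sum_{i=0}^{m-1}U'^i\phi\,U^{m-1-i}$ extends to a $(U',U)$-derivation on polynomials and $\del_{C_2}^2=0$ kills the length-two compositions --- and this worry is not imaginary: in the paper's computation, the step identifying $h(U^nx\otimes\del y)$ with $h(U^nx)\otimes\del y$ silently commutes $h$ past the $U$-powers appearing in $\del_{C_2}y$, which is exactly the defect your $\phi$- and $\psi$-insertions are built to absorb (in the paper's application $f$ intertwines $U$ and $U_1$ strictly, but $g$ and $h$ do not, so the point is substantive). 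Two caveats on your sketch: the iterated-cone decomposition should be organized by the free/quotient structure of $C_2$ (cones of multiplication maps $U^m$ on rank-one pieces, or an ordering coming from a homological grading such as the Maslov grading in the intended application), not literally by the $\Z$-filtration, whose order does not triangularize the twist, and infinitely many filtration levels require the same truncation to a free subcomplex that the paper invokes after Theorem \ref{tensor}; and the homotopy-$U$-equivariance of the final composed equivalence, which you defer, is a genuine clause of the statement (it is the paper's last displayed computation) and must be propagated through your cone induction rather than assumed.
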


\begin{proof}[\sc Proof]
The idea is that, since $C_2$ is freely generated, we can define a map on $C_1\otimes C_2$, for example, by extending a map defined on $C_1$. Further, because the map $U$ on $C_1$ decreases the filtration level by at least as much as the map $U$ on $C_2$, the extended map will still be filtered. 

More precisely, let $f:C_1 \to C_1'$ and $g:C_1' \to C_1$ be the chain maps which give the equivalence, and let $h$ and $h'$ be the chain homotopies from $g\circ f$ to $I_{C_1}$ and from $f\circ g$ to $I_{C_1'}$, respectively. We define chain maps $f:C_1 \otimes C_2 \to C_1'\otimes C_2$ and $g:C_1' \otimes C_2 \to C_1 \otimes C_2$ as follows. Suppose first that $y$ is a generator of $C_2$ as an $\F[U]$-module, then we set
$$f(x\otimes U^ny) = f(U^nx)\otimes y \ \ \ \textrm{and} \ \ \ g(x\otimes U^ny) = g(U^nx)\otimes y,$$ and extend the maps bilinearly over $\F$. Similarly, for $y$ a generator of $C_2$, we define maps $h:C_1\otimes C_2 \to C_1\otimes C_2$  and $h':C_1'\otimes C_2 \to C_1'\otimes C_2$ by setting
\[ h(x\otimes U^ny) = h(U^nx)\otimes y \ \ \ \textrm{and} \ \ \ h'(x\otimes U^ny) = h'(U^nx)\otimes y, \]
and extending bilinearly.

Now we have
\begin{align*}
g\circ f (x\otimes U^ny)=&\ g\circ f (U^nx)\otimes y \\
=&\ (I+\del h + h \del)(U^nx)\otimes y \\
=&\ U^nx\otimes y +\del h(U^nx)\otimes y +h(\del U^nx)\otimes y \\
=&\ U^nx\otimes y + \del h(U^nx)\otimes y+ h(\del U^nx)\otimes y \\
& \ \ \ \ \ \ \ + [h(U^nx)\otimes \del y + h(U^nx)\otimes \del y]\\
=&\ U^nx\otimes y + (\del h(U^nx)\otimes y + h(U^nx)\otimes \del y)\\
&\ \ \ \ \ \ \  + (h(\del U^nx)\otimes y +h(U^nx)\otimes \del y)\\
=&\ (I+\del h +h\del) (x\otimes U^ny),
\end{align*}
and, since the maps are bilinear, we see that $g\circ f$ is chain homotopic to $I_{C_1\otimes C_2}$ via the chain homotopy $h$. By a symmetric argument, $f\circ g \sim I_{C_1'\otimes C_2}$ via $h'$.

Note also that, for example, 
\begin{align*}
F_{C_1'\otimes C_2}(f(x\otimes U^ny))=&\ F_{C_1'\otimes C_2}(f(U^nx)\otimes y)\\
=&\ F_{C_1'}(f(U^nx))+F_{C_2}(y)\\
\leq&\ F_{C_1}(U^nx)+F_{C_2}(y)\\
=&\ F_{C_1\otimes C_2}(x\otimes U^ny).
\end{align*}
In other words, because the maps $f, g, h$ and $h'$ are filtered on $C_1$ and $C_1'$, the maps $f,g,h$ and $h'$ are filtered on $C_1\otimes C_2$ and $C_1'\otimes C_2$. So, the two tensor product complexes are $\Z$-filtered chain homotopy equivalent.

Finally, we verify that this equivalence is in fact $(\Z,U)$-filtered. By assumption, $fU\sim Uf$ as maps from $C_1$ to $C_1'$, so there exists a map $\phi:C_1\to C_1'$ such that \[fU=Uf+\del_{C_1'}\phi + \phi\del_{C_1}.\] In a similar fashion as before, we define \[\phi:C_1\otimes C_2\to C_1'\otimes C_2 \] by setting \[\phi(x\otimes U^ny) = \phi(U^nx)\otimes y\] when $y$ is a generator of $C_2$, and extending bilinearly over $\F$. 

We then check that
\begin{align*}
f(U(x\otimes U^ny)) =&\ f(U^{n+1}x)\otimes y\\
=&\ (Uf+\del\phi + \phi\del)(U^nx)\otimes y \\
=&\ (Uf+\del\phi + \phi\del)(U^nx)\otimes y \\
&\ \ \ \ \ \  + [\phi(U^nx)\otimes \del y + \phi(U^nx)\otimes \del y]\\
=&\ (Uf + \del\phi +\phi\del)(x\otimes U^ny).
\end{align*}
That is, $fU\sim Uf$, and by the same reasoning, $Ug\sim gU$.
\end{proof}
This lemma will be the key to simplifying computations for connected sums of knots, for the complexes which we will define below.

\section{Reducing the knot Floer complex}
\label{knotfloer}

We now turn to the complexes of interest in this paper. Given a pointed Heegaard diagram for a three-manifold $Y$ (with $k$ $\alpha$-curves and $k$ $\beta$-curves), Ozsv\'ath and Szab\'o \cite{OSdisks} define a $\Z$-filtered chain complex $CF^{\infty}(Y,\mf t)$ for each Spin$^c$ structure $\mf t$ on $Y$. The complex is freely generated over \( \F[U,U^{-1}]\) by $k$-tuples of intersection points on the Heegaard diagram. We will denote the set of generators by $\mathfrak{G}$. The filtration level of the homogeneous element $U^nx$ is $-n$, for any $x\in \G$ and any $n\in \Z$. Adding a second basepoint to the Heegaard diagram specifies a knot $K$ in $Y$. Using this additional basepoint (and fixing a Seifert surface for $K$), each $x_i\in \G$ can be assigned an integer $A(x_i)$, called the {\it{Alexander grading}}. If we let \(y_i\) denote a homogeneous element, then after setting 
\begin{equation}
A(U^nx_i)=A(x_i)-n, \ \ \ A\left( \sum_i y_{i} \right) = \max_i \{ A(y_{i}) \},
\label{maxalex}
\end{equation}
$A$ defines an additional filtration on $CF^{\infty}(Y,\mf t),$ discovered in \cite{OSknot}, and independently by Rasmussen \cite{Rasmussenknot}. The $\Z \oplus \Z$-filtered chain homotopy type of this complex is an invariant of $K\subset Y$, and in the case $Y\cong S^3$, this invariant is denoted $CFK^{\infty}(K)$. 

We can write the \( \Z \oplus \Z\)-filtration level of the homogeneous element $U^nx$ as \[ F(U^nx)=(-n,A(x)-n)\] for any $x\in \G$ and $n\in \Z$. It will be convenient to represent these complexes graphically in the $(i,j)$-plane, where $U^nx$ will be represented by a dot with coordinates $(-n, A(x)-n)$. If $x$ and $y$ are two homogeneous elements such that \( \del (x,y)=1\), then we will draw an arrow from the dot representing $x$ to the dot representing $y$. We should also point out here that $CFK^{\infty}(K)$ comes with a homological $\Z$-grading $M$, called the {\it{Maslov grading}}, and that multiplication by $U$ decreases $M$ by 2; i.e., 
\begin{equation}
\label{maslov}
M(Ux)=M(x)-2.
\end{equation}
The difference between the Maslov gradings of two generators can be read from the Heegaard diagram, and to fix an absolute Maslov grading, we declare that the element 1 in \[H_*(CFK^{\infty}(K)) \cong HF^{\infty}(S^3) \cong \F[U,U^{-1}] \] has Maslov grading zero.

Following convention, for a subset $S\subset \Z\oplus \Z$, we will denote by $C\{S\}$ the elements of $C$ whose $(i,j)$-coordinates are contained in $S$, along with the arrows between these elements. We will often consider the subcomplex $C\{i\leq 0\}$, which is written as $CFK^-(K)$. Figure \ref{trefoil} shows the complex $CFK^-(K)$ in the case where $K$ is the right-handed trefoil.

\begin{figure}
$$\xymatrixcolsep{0.5 pc}\xymatrixrowsep{1 pc}
\xymatrix{
& & & & \bu \ x_{(0)} & 1 \\
& & & \bu & \bu \ y_{(-1)} \ar[l] \ar@<-3ex>[d] & j=0 \\
& &  \bu & \bu \ar[l] \ar[d] &  \bu \ z_{(-2)} & -1 \\
&  \bu & \bu \ar[l] \ar[d] & \bu & & -2 \\
& \da[ld] & \bu & &  &-3 \\
\\
& -3 & -2 & -1 & i=0  & 
 }$$
\caption{The complex $CFK^-(T(2,3))$. The generators are represented by dots in the $i=0$ column with their Maslov grading in parentheses, and multiplication by $U$ translates an element one row down and one column to the left. The arrows represent the nonzero terms of the differential.}
\label{trefoil}
\end{figure}
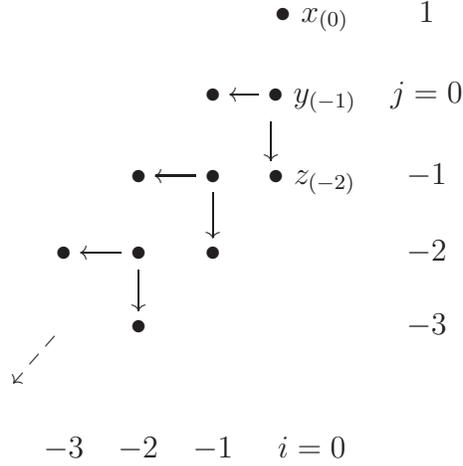

Saying that $A$ defines a filtration means in particular that $A(\del x)\leq A(x)$ for any $x$, and we will call the part of the differential which preserves the Alexander grading the {\it{horizontal differential}}, denoted $\del_H$. Diagramatically, $\del_H$ simply consists of those arrows which are horizontal.  For example, in Figure \ref{trefoil}, we have $\del y=Ux+z$, and $\del_H y=Ux$. If we restrict our attention to a single Alexander grading -- that is, a single row -- of $CFK^{\infty}$, we get a filtered chain complex which has homology isomorphic to $\F$. The \textquotedblleft simplest" such filtered complex would be one which has generators paired into acyclic summands, and a single isolated generator of homology, which has no arrow going into or out of it. It was shown in \cite[Proposition 11.52]{LOT} that one can always find such a basis for $CFK^{\infty}(K)$, called a {\it{horizontally simplified}} basis. In other words, we can choose a generating set \( \G = \{x_i, y_i, z, |  1\leq i\leq N \},\) such that
\begin{itemize}
\item $\del_H (y_i) = U^{r_i}x_i$, for some $r_i >0$, \\
\item $\del_H (x_i) = \del_H (z) = 0$.
\end{itemize}
With respect to this basis, the homology of each subquotient complex \( C\{ j=k\} \) (each row) is generated by the class $U^lz$ (where $l$ differs from $k$ by a constant). All other generators are paired by horizontal arrows, and can therefore be canceled as in Section \ref{prelim}. 

We refer back to Figure \ref{trefoil} to make one more observation. The basis $\{x,y,z\}$ shown there is horizontally simplified, but because we are considering the subcomplex $CFK^-(K)$, rather than all of $CFK^{\infty}(K)$, it is not only $U^kz$ which generates homology: $x$ is also homologically nontrivial (the horizontal arrow which would cancel it has been \textquotedblleft cut off"). Starting with any nontrivial knot $K$, if we cancel horizontal arrows in $CFK^-(K)$, we are left with some elements which are eventually canceled for high enough powers of $U$, but not for lower powers of $U$. 

We are now ready to construct the object of interest in this section, the reduced knot Floer complex. It will be convenient to think of $CFK^-(K)$ as being $(\Z,U)$-filtered, by the Alexander grading $A$, rather than $\Z\oplus \Z$-filtered. So, when we refer to the filtration level of an element, we will mean its \( j\)-coordinate in the diagram (although, for aesthetic reasons, we will maintain the appearance of different $i$-coordinates). In this case, the map $U$ is a filtered chain map of degree 1; i.e., $A(Ux)=A(x)-1$. 

Let $K$ be a knot in $S^3$. To simplify notation, let us define $C_0:=CFK^-(K)$. After choosing a horizontally simplified basis 
\[ \{x_i, y_i, z | 1\leq i\leq N\},\] we will reduce the complex by \textquotedblleft canceling" all of the horizontal arrows, as in Section \ref{prelim}. More precisely, let $h_1$ be the $\F$-linear map on $C_0$ which inverts the horizontal differential going from $y_1$ to $x_1$, and all of its $U$-translates,  so that $h_1(U^{r_1+n}x_1)=U^ny_1$ for all $n\geq0$, and $h_1$ is zero on all other homogeneous elements. 

We now define a filtered chain complex $C_1$ which is freely generated over $\F$ as follows. The generators for $C_1$ over $\F$ are obtained from the homogeneous elements of $C_0$ by removing $U^{r_1+n}x_1$ and $U^ny_1$ for all $n\geq 0$. Define maps $f_1:C_0\to C_1$ and $g_1: C_1 \to C_0$ by

\[ f_1 = \pi \circ (I+\del  h_1), \ \ \ g_1 = (I+h_1 \del)\circ \iota,\] where $\pi$ and $\iota$ are the natural projection and inclusion maps. Further, we define a differential and $U$ map on $C_1$:
\begin{equation}
\label{U_i}
\del_{1} := \pi \circ (\del+\del h_1 \del )\circ \iota, \ \ \ \ \ U_{1}:= \pi\circ(U+\del h_1 U)\circ \iota .
\end{equation}

The filtration $A_1$ on $C_1$ is induced by inclusion, $A_1(x) :=A(\iota x)$.
Since the maps $\del, U$ and $h_1$ are all filtered, so are the maps $f_1,g_1,\del_{1},$ and $U_{1}$ defined above. Let us consider the map $U_1$ in more detail. Recall that on the complex $C_0$, $U$ is a homogeneous map of degree 1. The map $U_1$, however, will not be homogeneous. There are two cases to consider. For the generator $U^{r_1-1}x_1$ (the highest remaining $U$-power which was not canceled by a horizontal arrow), 
\begin{align}
\begin{split}
U_1(U^{r_1-1}x_1)=&\ \left( \pi\circ (U+\del h_1U)\circ\iota\right) (U^{r_1-1}x_1) \\
=&\ \pi\circ \big(U^{r_1}x_1 + \del h_1(U^{r_1}x_1)\big)\\
=&\ \pi\circ (U^{r_1}x_1 + \del y_1).
\label{U_1}
\end{split}
\end{align}
Since we chose a basis which is horizontally simplified, $U^{r_1}x_1$ is the only term in $\del y_1$ which has filtration level equal to that of $y_1$. That is, \[ A( U^{r_1}x_1+\del y_1) < A(y_1).\] It follows from \eqref{U_1} then, that 
\begin{equation}
A_1\big(U_1(U^{r_1-1}x_1)\big)< A(y_1) = A_1(U^{r_1-1}x_1)-1,
\label{bentU}
\end{equation}
so the map $U_1$ decreases the filtration level by more than 1. On all other generators of $C_1$, however, the map $U_1$ is equal to $\pi\circ U \circ\iota$, and therefore still decreases the filtration level by exactly 1. Therefore, the complex $C_1$ is of the type mentioned in Equation \eqref{filtge1}; it is $(\Z,U)$-filtered, and $U$ is a filtered map which decreases the filtration level by {\it at least} one.

\begin{remark}
\label{u1deg}
By its definition, the map $h_1$ increases the Maslov grading by 1, and as a result, the map $U_1$ still lowers the Maslov grading by exactly 2.
\end{remark}

\begin{lemma}
\label{che}
$C_1$ is a $(\Z,U)$-filtered chain deformation retract of $C_0$.
\end{lemma}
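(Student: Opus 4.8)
The plan is to recognize Lemma \ref{che} as the filtered ``Gaussian elimination'' of the horizontal arrow $y_1 \to U^{r_1}x_1$ together with all of its $U$-translates, promoted to the $(\Z,U)$-filtered setting. The maps $f_1, g_1$ and the homotopy $h_1$ are exactly those of the reduction described in Section \ref{prelim}, so the bulk of the work is to verify the conditions in the definition of a $(\Z,U)$-filtered chain deformation retract: that $f_1, g_1$ are filtered chain maps, that $f_1 g_1 = I_{C_1}$, that $g_1 f_1 = I_{C_0} + \del h_1 + h_1\del$, and that $f_1, g_1$ intertwine $U$ and $U_1$ up to filtered chain homotopy.

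First I would record the elementary structural facts about $h_1$. By construction $h_1^2 = 0$ (its image lies in the span of the $U^n y_1$, which it annihilates), and $h_1$ preserves the Alexander filtration level exactly, since the cancelled arrow is horizontal, $A(U^{r_1}x_1) = A(y_1)$; combined with Remark \ref{u1deg} this makes $h_1$ a filtered map of Maslov degree $+1$. The two Maslov-grading observations that drive everything are: (i) $\del y_1 = U^{r_1}x_1 + \rho$, where for Maslov-grading reasons $\rho$ contains no $U$-translate of $x_1$ (equivalently, no generator that $h_1$ sees), so $\rho$ lies in the span of the surviving generators; and (ii) $\del x_1$ likewise contains no $U$-translate of $x_1$. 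From these it follows at once that $h_1 \del h_1 = h_1$.

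With these identities in hand, the retract conditions fall out by evaluating on the three families of homogeneous generators: the surviving generators, the $U^n y_1$, and the $U^{r_1+n}x_1$. The identity $f_1 g_1 = I_{C_1}$ is the most transparent, since $h_1$ vanishes on surviving generators and the correction produced by $g_1$ lands in the span of the $U^n y_1$, which $\pi$ kills. The identity $g_1 f_1 = I_{C_0} + \del h_1 + h_1\del$ is checked family by family; the only delicate case is the $U^{r_1+n}x_1$, where matching the two sides requires $h_1\del(U^n\rho) = 0$, and this is precisely where $\del^2 = 0$ enters, via $\del\rho = U^{r_1}\del x_1$ together with fact (ii). That $f_1, g_1$ are chain maps for $(\del, \del_1)$, and that every map in sight is filtered, is then routine, the latter already noted in the text since $\del, U$ and $h_1$ are all filtered.

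The step I expect to be the main obstacle, and the only genuinely new content beyond the standard cancellation lemma of \cite[Lemma 4.1]{HeddenNi} and \cite[Section 5.1]{Rasmussenknot}, is the $(\Z,U)$-equivariance, because $U_1$ is not homogeneous and is not simply the restriction of $U$. Here I would argue as in Lemma \ref{retracttensor}: a direct evaluation on the three families of generators shows that in fact $f_1 U = U_1 f_1$ holds on the nose (observation (i) again forces the relevant $h_1$-corrections to vanish). For the other direction one does not get equality, but feeding $U_1 = f_1 U g_1$ (a consequence of $f_1 U = U_1 f_1$ and $f_1 g_1 = I_{C_1}$) into $g_1 f_1 = I_{C_0} + \del h_1 + h_1\del$, and using that $U$ is a chain map, yields $g_1 U_1 - U g_1 = \del H + H\del_1$ with the explicit filtered homotopy $H := h_1 U g_1$. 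This confirms $g_1 U_1 \sim U g_1$ and completes the verification that $C_1$ is a $(\Z,U)$-filtered chain deformation retract of $C_0$.
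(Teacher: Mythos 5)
Your proposal is correct and takes essentially the same approach as the paper: the same maps $f_1=\pi\circ(I+\del h_1)$ and $g_1=(I+h_1\del)\circ\iota$, the same homotopy $h_1$, the same Maslov-grading observations controlling which $U$-translates of $x_1$ and $y_1$ can appear in $\del y_1$ and $\del x_1$, and the same case-by-case verification on the three families of generators, including the on-the-nose identity $f_1U=U_1f_1$. Your derivation of $g_1U_1\sim Ug_1$ by substituting $U_1=f_1Ug_1$ into $g_1f_1=I+\del h_1+h_1\del$ is just a tidy way of producing exactly the homotopy $h_1Ug_1$ that the paper asserts at the end of its proof.
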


\begin{proof}[\sc Proof]
We first verify the chain homotopy equivalence. For any homogeneous element $x$ for which \( \pi(x)\neq 0\), 
\begin{align*}
f_1\circ g_1(\pi(x)) = \ & \pi\circ (I+\del h_1)(I+h_1\del )\circ \iota (\pi(x)) \\
= \ & \pi\circ (I+ \del h_1 + h_1\del )(x) \\
= \ & \pi \circ (x + \del(0) +h_1(\del x)) \\
= \ & \pi(x),
\end{align*}
since the image of $h_1$ projects to zero. So, $f_1\circ g_1 = I_{C_1}$.

Next, we consider the composition $g_1\circ f_1$, in three distinct cases. First, if $x$ is any homogeneous element for which \( \pi(x)\neq 0\), then \( h_1(x)=0\), so
\begin{align*}
g_1\circ f_1 (x) = \ & \left( (I+h_1\del )\circ\iota\right) \circ\left( \pi\circ (I + \del h_1)\right) (x) \\
= \ & (I+h_1\del ) (x) \\
= \ & (I+h_1\del + \del h_1) (x).
\end{align*}
Second, for $n\geq 0$, we have
\begin{align}
\begin{split}
g_1\circ f_1 (U^{r_1+n}x_1) = \ &  \left( (I+h_1\del )\circ\iota\right) \circ \left( \pi\circ (I + \del h_1)\right) (U^{r_1+n}x_1) \\
= \ & (I+h_1\del )\circ\iota\circ \pi\circ (U^{r_1+n}x_1 + \del (U^n y_1)) .
\end{split}
\label{g1f1}
\end{align}

Recall that multiplication by $U$ lowers the Maslov grading by 2. Since $ \del (U^ny_1,U^{r_1+n}x_1) =1$, it follows that $\del (U^ny_1,U^{k}x_1) =0$ for any $k\neq r_1+n$. So, the expression $ (U^{r_1+n}x_1 + \del (U^n y_1))$ in \eqref{g1f1} has no terms of the form $U^kx_1$. Similarly, by considering Maslov gradings, it also contains no elements of the form $U^ky_1$. Because of this, the composition $\iota\circ \pi$ in \eqref{g1f1} is the identity, so we again get
$$ g_1\circ f_1 (U^{r_1+n}x_1) = (I+h_1\del + \del h_1) (U^{r_1+n}x_1).$$
 Finally, $g_1\circ f_1 (U^ny_1) = 0$ for all $n\geq 0$ (since $f_1(U^ny_1)=0$). Also, 
\begin{equation*}
(I + h_1\del + \del h_1)(U^ny_1) = U^ny_1 + U^ny_1 + 0 = 0.
\end{equation*} So, we have verified that in all cases, $$g_1\circ f_1 = I+h_1\del + \del h_1,$$ which is to say, $g_1\circ f_1$ is chain homotopic to the identity. 

This shows that $C_1$ is a $\Z$-filtered chain deformation retract of $C_0$. It remains to check that this equivalence respects multiplication by $U$. In most cases, the fact that $f_1$ commutes with $U$ is immediate, because, in most cases, all maps in the definition of $f_1$ commute with $U$. In fact, this is true for every homogeneous element except $U^{r_1-1}x_1$, on which $h_1$ and $U$ do not commute. We verify the claim directly in this case,
\begin{align*}
f_1 U (U^{r_1-1}x_1) = \ & \pi\circ (U+\del h_1 U)(U^{r_1-1}x_1) \\
= \ &  \pi\circ (U+\del h_1 U)\circ\iota\circ \pi\circ (U^{r_1-1}x_1) \\
= \ & U_1 \circ\pi\circ (U^{r_1-1}x_1)\\
= \ & U_1 \circ\pi\circ (I+\del h_1)(U^{r_1-1}x_1)\\
= \ & U_1 f_1 (U^{r_1-1}x_1).
\end{align*}
Similarly, one can verify that $g_1U_1$ is chain homotopic to $Ug_1$ via the (filtered) chain homotopy $h_1Ug_1$.
\end{proof}

Beginning with $C_0$, we have now reduced the number of horizontal arrows and obtained a $(\Z,U)$-filtered chain homotopy equivalent complex $C_1$. The differential $\del_1$ is nearly just $\pi\circ\del$. The exception being that, if there was an arrow going from a homogeneous element to $U^{r_1+n}x_1$, the differential $\del_1$ adds an arrow from that element to the remaining image of $\del (U^ny_1)$ (see the discussion in Section \ref{prelim} and Figure \ref{example}). Note though, that these additional arrows must always decrease the filtration level (in fact, by more than 1). In particular, the basis given for $C_1$ is still horizontally simplified, with the horizontal differential being $\pi\circ \del_H$. This means we can iterate the above process, at each step moving from $C_i$ to $C_{i+1}$ by canceling the horizontal arrows $U^ny_i \to U^{r_i+n}x_i$, and obtaining a $(\Z,U)$-filtered chain homotopy equivalent complex with a filtered chain map $U_i$. If we begin with a basis $\G$ for $C_0$ consisting of $2N+1$ elements, then $C_N$ will have no horizontal arrows, and will be said to be a \textquotedblleft reduced" version of $CFK^-(K)$. An example of this process of reduction is shown for the $(2,7)$-torus knot in Figure \ref{t27}.

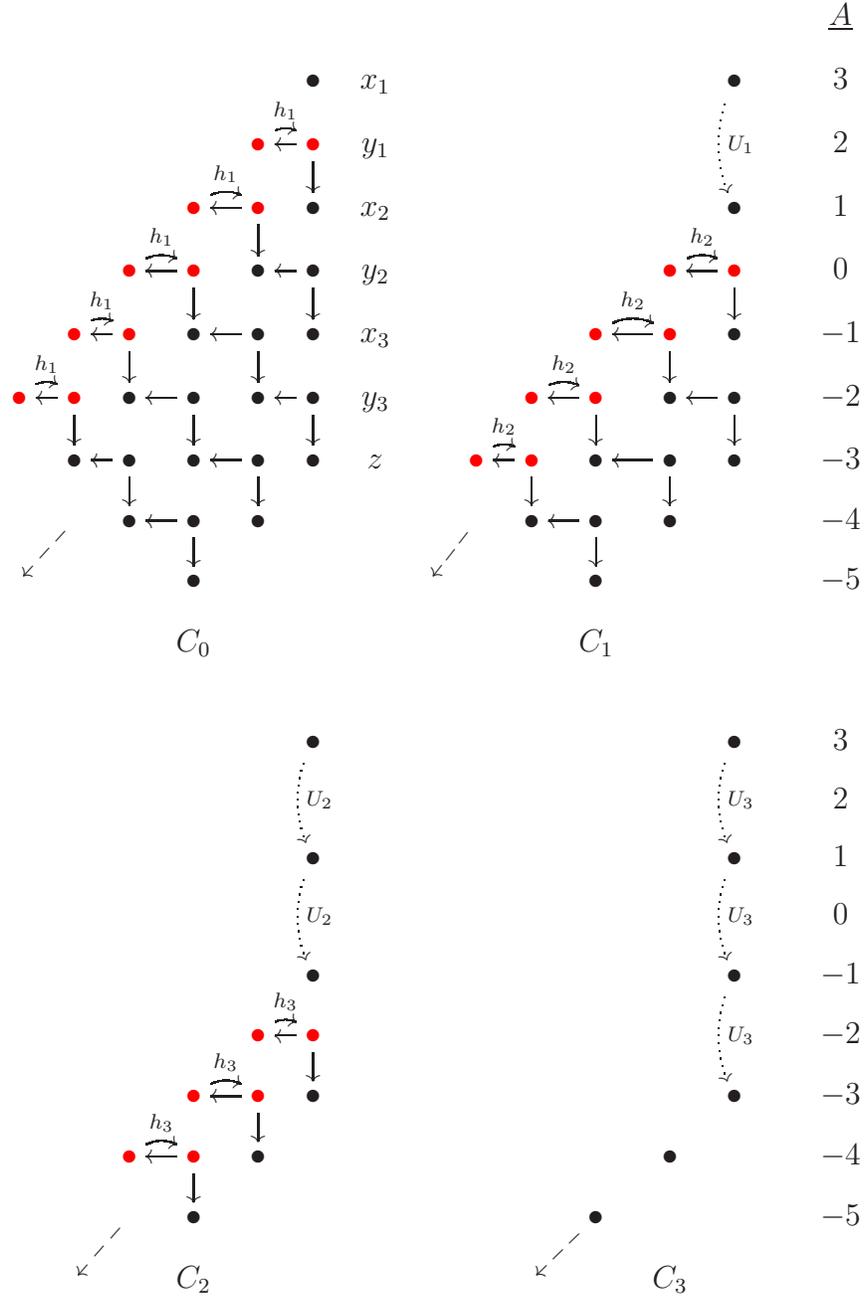
\begin{figure}
$$\xymatrixcolsep{0.65 pc}\xymatrixrowsep{0.6 pc}
\xymatrix{
& & & & & & & & & & & & & & \underline{A} \\
& & & & & \bu & x_1 & &       & & & & \bu \dar@/_/[dd]^{U_1} &                                                                                                     &     3 \\
& & & & \tc{red}{\bu} \ar@/^/[r]^{h_1} & \tc{red}{\bu} \ar[l] \ar[d] & y_1 &   &               & & & & &                                             &    2 \\
& & & \tc{red}{\bu}  \ar@/^/[r]^{h_1} & \tc{red}{\bu} \ar[l] \ar[d] & \bu & x_2 &    &      & & & & \bu &                                       &     1 \\
& & \tc{red}{\bu}  \ar@/^/[r]^{h_1} & \tc{red}{\bu} \ar[l] \ar[d] & \bu & \bu \ar[l] \ar[d] & y_2 &         &       & & & \tc{red}{\bu} \ar@/^/[r]^{h_2} & \tc{red}{\bu} \ar[l] \ar[d] &   &   0 \\
& \tc{red}{\bu}  \ar@/^/[r]^{h_1} & \tc{red}{\bu} \ar[l] \ar[d] & \bu &\bu \ar[l] \ar[d] & \bu & x_3 &  &    & & \tc{red}{\bu} \ar@/^/[r]^{h_2} & \tc{red}{\bu} \ar[l] \ar[d] & \bu &    &  -1 \\
\tc{red}{\bu}  \ar@/^/[r]^{h_1} & \tc{red}{\bu} \ar[l] \ar[d] & \bu & \bu \ar[l] \ar[d] & \bu  & \bu \ar[l] \ar[d] & y_3 & &   & \tc{red}{\bu} \ar@/^/[r]^{h_2} & \tc{red}{\bu} \ar[l] \ar[d] & \bu & \bu \ar[l] \ar[d] &            &         -2 \\
& \bu & \bu \ar[l] \ar[d] & \bu &\bu \ar[l] \ar[d] & \bu & z &   &      \tc{red}{\bu} \ar@/^/[r]^{h_2} & \tc{red}{\bu} \ar[l] \ar[d] & \bu &\bu \ar[l] \ar[d] & \bu &                                                      &  -3 \\
& \da[ld] & \bu & \bu \ar[l] \ar[d] & \bu & &  &                   &       \da[ld] & \bu & \bu \ar[l] \ar[d] & \bu & &                                    &  -4 \\
& & & \bu & & & &                                                     &                & & \bu & & &                                                            &  -5 \\
& & & C_0 & & & & & & & C_1 \\
\\
& & & & & \bu \dar@/_/[dd]^{U_2} & & & & & & & \bu \dar@/_/[dd]^{U_3} & & 3 \\
& & & & & & & & & & & & & & 2 \\
& & & & & \bu \dar@/_/[dd]^{U_2} & & & & & & & \bu \dar@/_/[dd]^{U_3} & & 1 \\
& & & & & & & & & & & & & & 0 \\
& & & & & \bu & & & & & & & \bu \dar@/_/[dd]^{U_3} & & -1 \\
& & & & \tc{red}{\bu} \ar@/^/[r]^{h_3} & \tc{red}{\bu} \ar[l] \ar[d] & & & & & & & & & -2 \\
& & & \tc{red}{\bu} \ar@/^/[r]^{h_3} & \tc{red}{\bu} \ar[l] \ar[d] & \bu & & & & & & & \bu  & & -3 \\
& & \tc{red}{\bu} \ar@/^/[r]^{h_3} & \tc{red}{\bu} \ar[l] \ar[d] & \bu & & & & & & &  \bu & & & -4 \\
& & \da[ld] & \bu & & & & & & & \bu \da[dl] & & & & -5 \\
& & & C_2 & & & & & & & & C_3 & & &
}$$
\caption{ The complex $C_0=CFK^-(T(2,7))$, and the process of reducing to $C_3=\underline{CFK}^-(T(2,7))$. At each step, the map $h_i$ provides the chain homotopy necessary to cancel the horizontal arrows from $U^ny_i$ to $U^{n+1}x_i$ (the dots colored red). Multiplication by $U$ is always taken to be translation one down and one to the left, unless otherwise shown with a dotted arrow. }
\label{t27}
\end{figure}

The process described above explicitly obtains a reduced complex after a choice of an ordered, horizontally simplified basis. More generally, we have the following definition.

\begin{definition}
\label{reduced}
Let $K$ be a knot in $S^3$, and $C$ be a $(\Z,U)$-filtered chain complex. If $C$ is $(\Z,U)$-filtered chain homotopy equivalent to $CFK^-(K)$, and the differential on $C$ strictly decreases the filtration level, then $C$ is called the {\it{reduced $CFK^-(K)$}}, denoted $\underline{CFK}^-(K)$.
\end{definition}

The condition that the differential strictly decreases filtration level says precisely that there are no horizontal arrows. Of course, this complex is only well-defined up to $(\Z, U)$-filtered chain homotopy equivalence of complexes with no horizontal arrows.

Note that, since $CFK^-(K)$ is $\Z$-filtered by the Alexander grading, there is naturally a spectral sequence whose $(E_0,d_0)$ page is $(CFK^-(K),\del_H)$,  which converges to 
\begin{equation}
H_*(CFK^-(K),\del ) \cong
\begin{cases}
\F_{(-2i)}   &   i\geq 0, \\
0 &  \text{else}
\end{cases}
\end{equation}
as a graded group, where, as in Figure \ref{trefoil}, the subscript denotes the homological grading of each generator (the filtration on this group depends on $K$). An alternative view of the above construction, then, is that $\underline{CFK}^-(K)$ is the $E_1$ page of this spectral sequence. Further, the filtered endomorphism $U$ on $E_0$ induces a filtered endomorphism on $E_1$, and that is precisely the map $U$ we have defined on $\underline{CFK}^-(K)$. It is the additional information given by this induced $U$ map which will be useful for computing $d$-invariants in Section \ref{applications}.

\subsection{Sums of knots}
\label{sums}

As will be seen in this section and Section \ref{applications}, the reduced complex $\underline{CFK}^-(K)$ retains much of the information contained in $CFK^-(K)$, including, by definition, the homology of its associated graded complex, which is denoted $HFK^-(K)$. It was shown by Ozsv\'ath and Szab\'o in \cite[Theorem 7.1]{OSknot} that $CFK^-$ behaves simply under connected sums of knots; namely,
\begin{equation}
\label{connectedsumformula}
 CFK^-(K_1\# K_2) \cong CFK^-(K_1)\otimes_{\F [U]} CFK^-(K_2).
\end{equation}
However, these tensor product complexes are inconvenient to deal with by hand, even for sums of knots with small knot Floer homology. Many of the applications of this paper are to sums of knots, and so it will be convenient to be able to reduce a complex before taking a tensor product, in order to decrease the size of the product. The following theorem ensures that this is possible.
\begin{theorem}
\label{tensor}
If $K_1$ and $K_2$ are knots in $S^3$, then 
\begin{equation}
\underline{CFK}^-(K_1)\otimes_{\F [U]} CFK^-(K_2)
\label{reducedtensorproduct}
\end{equation}
 is a $(\Z,U)$-filtered chain deformation retract of $CFK^-(K_1\# K_2)$.
\end{theorem}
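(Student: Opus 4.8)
The plan is to combine the connected-sum formula \eqref{connectedsumformula} with Lemma \ref{retracttensor} in an essentially mechanical way. By \cite[Theorem 7.1]{OSknot}, we have the identification $CFK^-(K_1\# K_2)\cong CFK^-(K_1)\otimes_{\F[U]}CFK^-(K_2)$, so it suffices to show that the left-hand tensor factor may be replaced by $\underline{CFK}^-(K_1)$ up to $(\Z,U)$-filtered chain deformation retract. First I would invoke Lemma \ref{che}: by iterating the cancellation of horizontal arrows $C_0\to C_1\to\cdots\to C_N=\underline{CFK}^-(K_1)$, each step is a $(\Z,U)$-filtered chain deformation retract, and by Remark \ref{retractcomp} the composite is again such a retract. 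Thus $\underline{CFK}^-(K_1)$ is a $(\Z,U)$-filtered chain deformation retract of $CFK^-(K_1)$.

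Next I would feed this equivalence into Lemma \ref{retracttensor}, taking $C_1:=CFK^-(K_1)$, $C_1':=\underline{CFK}^-(K_1)$, and $C_2:=CFK^-(K_2)$. Here one must verify the hypotheses of that lemma with $k=1$: the map $U$ on $CFK^-(K_2)$ is homogeneous of degree $1$ in the Alexander filtration (it satisfies $A(Ux)=A(x)-1$ and $CFK^-(K_2)$ is free over $\F[U]$, being generated over $\F$ by the $U$-translates of the intersection points $\G$), and the map $U$ on both $CFK^-(K_1)$ and $\underline{CFK}^-(K_1)$ decreases the filtration level by at least $1$ --- on $CFK^-(K_1)$ this is exact, while on $\underline{CFK}^-(K_1)$ it follows from the construction, in particular the estimate \eqref{bentU}. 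With these hypotheses checked, Lemma \ref{retracttensor} yields a $(\Z,U)$-filtered chain homotopy equivalence between $CFK^-(K_1)\otimes_{\F[U]}CFK^-(K_2)$ and $\underline{CFK}^-(K_1)\otimes_{\F[U]}CFK^-(K_2)$.

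The one point requiring care is that Lemma \ref{retracttensor} as stated delivers only a $(\Z,U)$-filtered chain homotopy \emph{equivalence}, whereas the theorem claims a \emph{deformation retract} (i.e. $f\circ g=I$ on the nose). I would therefore revisit the proof of Lemma \ref{retracttensor}: the maps $f,g,h,h'$ there are obtained by extending the corresponding maps on the first factor bilinearly over the free generators of $C_2$. Since the equivalence $CFK^-(K_1)\simeq\underline{CFK}^-(K_1)$ is a deformation retract, the homotopy $h'$ on $\underline{CFK}^-(K_1)$ is trivial; hence its bilinear extension to $\underline{CFK}^-(K_1)\otimes C_2$ is also trivial, so the induced $f\circ g$ equals the identity on the reduced tensor product. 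This upgrades the equivalence to a deformation retract, as required. The main obstacle --- and the only genuinely nontrivial verification --- is confirming that the asymmetric filtration convention \eqref{tensorfiltration} is the correct one for this retract: because $U$ on the first factor is inhomogeneous (by \eqref{bentU}), one must apply $U$ to the first component when computing filtration levels, exactly as in \eqref{tensorfiltration}, and then the filtration estimates carried out in the proof of Lemma \ref{retracttensor} apply verbatim. Everything else is the routine bookkeeping already packaged inside Lemma \ref{retracttensor}.
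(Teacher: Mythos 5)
Your proof is correct and follows essentially the same route as the paper: it combines the connected-sum formula \eqref{connectedsumformula} with Lemma \ref{retracttensor}, verifying exactly the two hypotheses the paper checks (that $U$ is homogeneous of degree $1$ on the free complex $CFK^-(K_2)$, and that $U$ decreases the filtration by at least $1$ on $\underline{CFK}^-(K_1)$, via \eqref{bentU}). Your additional observation --- that the bilinearly extended homotopy $h'$ vanishes when the original one does, upgrading the homotopy equivalence of Lemma \ref{retracttensor} to a genuine deformation retract with $f\circ g=I$ --- is a point the paper leaves implicit in its appeal to the lemma, and you handle it correctly.
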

\begin{proof}[\sc Proof] After noting the relationship in equation \eqref{connectedsumformula}, the result will follow from Lemma \ref{retracttensor}. To verify that the lemma applies, however, we must verify the following. 

First, the complex $CFK^-(K_2)$ is, by definition, freely generated over $\F[U]$, and multiplication by $U$ decreases the filtration level by 1. Second, recall that the maps $U_i$ defined as in equation \eqref{U_i}, decrease the filtration level by at least 1. It follows that the map $U$ on the reduced complex $\underline{CFK}^-(K_1)$ also decreases the filtration level by at least 1. In other words, we define a filtration on the tensor product by formula \eqref{tensorfiltration}, and the result follows from Lemma \ref{retracttensor}.
\end{proof}

In order to use this effectively, we would like to further reduce the product complex \eqref{reducedtensorproduct}, which we will call $C$ for brevity, to get \( \underline{CFK}^-(K_1\# K_2) \). The method of reduction described explicitly above was for complexes which are freely generated over \( \F[U]\), but we should point out here that a complex such as $C$ can be handled similarly. This is because for sufficiently negative $i$, the subcomplex \[ A_i = \{ x \in C| A(x)\leq i \} \] is freely generated over \( \F [U]\), and the corresponding quotient \[ C/ A_i = \{ x\in C | A(x)>i \} \] is finitely generated over $\F$. So, after canceling the finite number of horizontal arrows with filtration level greater than $i$, we can use the same method as before.

\begin{remark}
\label{reducefirst}
Suppose $K_1$ and $K_2$ are knots in $S^3$, and we wish to reduce $CFK^-(K_1\# K_2)$. Then we can first reduce $CFK^-(K_1)$, tensor the reduced complex with $CFK^-(K_2)$, and then further reduce the product.
\end{remark}


To give an idea of how this facilitates computation, we consider a simple example, the sum of the right-handed and left-handed trefoils, $T(2,3)\#-~T(2,3)$. Figure \ref{trefoilsum} shows the knot Floer complexes of these two knots. To obtain $\underline{CFK}^-(T(2,3)\# -T(2,3))$, we first compute $\underline{CFK}^-(T(2,3))$ and then tensor it with $CFK^-(-T(2,3))$. To provide contrast, we also show in Figure \ref{trefoiltensor} the tensor product complex $CFK^-(T(2,3))\otimes_{\F[U]}CFK^-(-T(2,3))$.

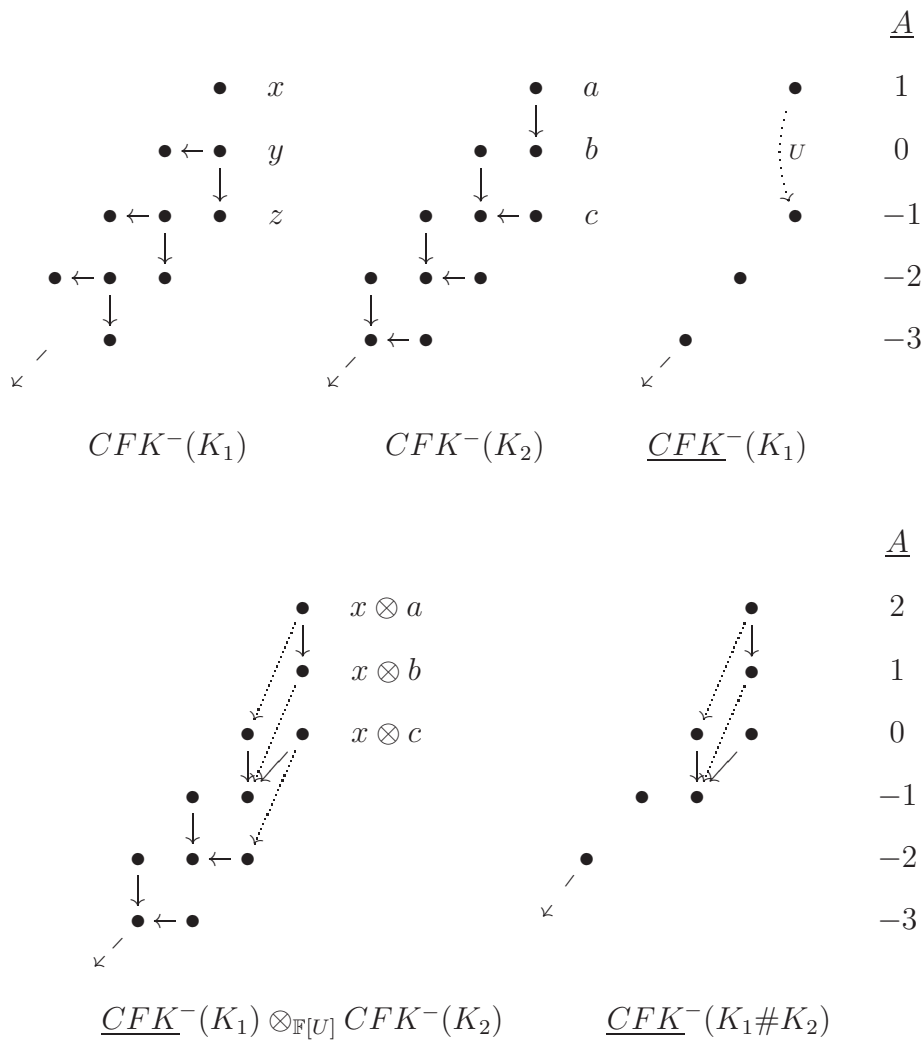
\begin{figure}
$$\xymatrixcolsep{0.65 pc}\xymatrixrowsep{0.65 pc}
\xymatrix{
 & & & & & & &               & & & & & &                      & & & &  \underline{A} \\
 & & & & \bu & x & &         & & & \bu \ar[d] & a & &     & & \bu \dar@/_/[dd]^U & & 1 \\
 & & & \bu & \bu \ar[l] \ar[d] & y & &   & & \bu \ar[d] & \bu & b & &          & & & & 0 \\
 & & \bu & \bu \ar[d] \ar[l] & \bu & z & &        & \bu \ar[d] & \bu & \bu \ar[l] & c & &      & & \bu & & -1 \\
 & \bu & \bu \ar[d] \ar[l] & \bu & & & &        \bu \ar[d] & \bu & \bu \ar[l] & & & &     & \bu & & & -2 \\
 & \da[dl] & \bu & & & & &                            \bu \da[dl] & \bu \ar[l] & & & & &          \bu \da[dl] & & & & -3\\
 & & & & & & &               & & & & & &                     & & & &  }$$
$$\xymatrixcolsep{0.5 pc}\xymatrixrowsep{0.5 pc}
\xymatrix{
 CFK^-(K_1) &  & &  &     CFK^-(K_2) & & &  \underline{CFK}^-(K_1) & }$$
$$\xymatrixcolsep{0.65 pc}\xymatrixrowsep{0.65 pc}
\xymatrix{
& & & & & & & & &                                 & & & & & & & & \\
& & & & & & & & &                                  & & & & & & & & \underline{A} \\
& & & & & & \bu \ar[d]  \dar[ddl] & x\otimes a & &                 & & & & & \bu \ar[d]  \dar[ddl] & & & 2 \\
& & & & & & \bu  \dar[ddl] & x\otimes b & &                           & & & & & \bu  \dar[ddl] & & & 1 \\
& & & & & \bu \ar[d] & \bu \ar[dl]  \dar[ddl] & x\otimes c & &                          &  & & & \bu \ar[d] & \bu \ar[dl] & & & 0 \\
& & & & \bu \ar[d] & \bu & & & &                                      & & & \bu & \bu & & & & -1 \\
& & & \bu \ar[d] & \bu & \bu \ar[l] & & & &                       & & \bu \da[dl] & & & & & & -2 \\
& & & \bu \da[dl] & \bu \ar[l] & & & & &                            & & & & & & & & -3 \\
& & & & & & & & &                                   & & & & & & & & }$$
$$ \xymatrixcolsep{0.5 pc}\xymatrixrowsep{0.5 pc}
\xymatrix{
& & & \underline{CFK}^-(K_1)\otimes_{\F[U]}CFK^-(K_2) & & &  \underline{CFK}^-(K_1\# K_2) & & & & & & & & & & &
}$$
\caption{An example of a connected sum, where $K_1$ is the right-handed trefoil and $K_2$ is the left-handed trefoil. We can first reduce to get $\underline{CFK}^-(K_1)$, then tensor this complex with $CFK^-(K_2)$. One more reduction gives the complex $\underline{CFK}^-(K_1\# K_2)$. }
\label{trefoilsum}
\end{figure}

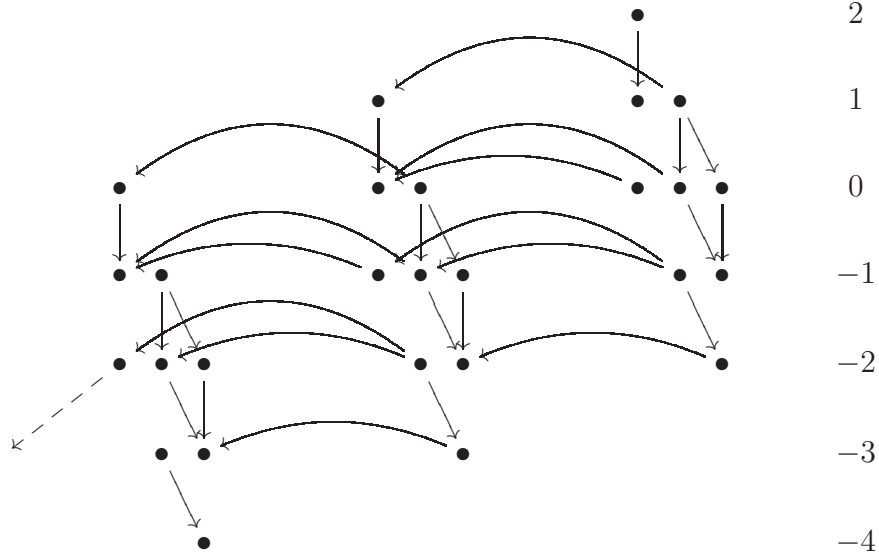
\begin{figure}
$$\xymatrixcolsep{0.25 pc} \xymatrixrowsep{1.5 pc} 
\xymatrix{
& & & & & & &                     & & & & &                                     & & &                                         & & &  & &                                \bu \ar[d] & & &           & & & 2\\
& & & & & & &                     & & &  & &                                  \bu \ar[d] & & &                                    & & &   & &                               \bu & \bu \ar[dr] \ar@/_2pc/[lllllllll] \ar[d] & &     & & & 1 \\
& & & & \bu \ar[d] & & &                 & & &  & &                    \bu & \bu \ar[dr] \ar@/_2pc/[lllllllll] \ar[d] & &                              & & & & &                       \bu \ar@/_1pc/[llllllll] & \bu \ar[dr] \ar@/_2pc/[lllllllll] & \bu \ar[d] &    & & & 0 \\
& & & & \bu & \bu \ar[dr] \ar[d] & &        & & &  & &                        \bu \ar@/_1pc/[llllllll] & \bu \ar@/_2pc/[lllllllll] \ar[dr] & \bu \ar[d] &                        & & &   & &                        & \bu \ar@/_1pc/[llllllll] \ar@/_2pc/[lllllllll] \ar[dr] & \bu &         & & & -1 \\
& & & & \bu \da[lllld] & \bu \ar[dr] & \bu \ar[d] &  & & &  & &                         & \bu \ar@/_1pc/[llllllll] \ar@/_2pc/[lllllllll] \ar[dr] & \bu &                               & & &  & &                       & & \bu \ar@/_1pc/[llllllll] &               & & & -2 \\
& & & &  & \bu \ar[dr] & \bu &        & & &  & &                                  & & \bu \ar@/_1pc/[llllllll] &                                     & & &  & &                        & & &                      & & & -3 \\
& & & & & & \bu &              & & &  & &                                  & & &                                           & & &   & &                       & & &                       & & & -4 
}$$
\caption{The tensor product complex corresponding to the sum of the right- and left-handed trefoils. Even for knots with simplest nontrivial knot Floer complexes, computing with tensor product complexes becomes tedious.}
\label{trefoiltensor}
\end{figure}

\section{Applications to $L$-space knots}
\label{applications}

In this section, we show how the reduced complex $\underline{CFK}^-(K)$ can be used to elucidate some properties of $L$-space knots. Recall that a rational homology 3-sphere $Y$ is called an {\it{$L$-space}} if -- like a lens space -- it has the \textquotedblleft smallest possible" Heegaard Floer homology; i.e., for each spin$^{c}$ structure $\mf{t}$, $\widehat{HF}(Y,\mf{t} )\cong\F$. A knot $K$ in $S^3$ is called an {\it{$L$-space knot}} if $n$-surgery on $S^3$ along $K$ is an $L$-space, for some positive integer $n$. It was shown in \cite[Theorem 1.2 and Corollary 1.6]{OSlens}, and restated more conveniently for our purposes in \cite[Remark 6.6]{Homconcordance}, that $L$-space knots have knot Floer complexes of a particular form, which we describe here.

\begin{prop}[Ozsv\'ath-Szab\'o]
\label{lspacechar}
If $K$ admits a positive $L$-space surgery, then $CFK^-(K)$ has a basis $\{ x_{-k}, \cdots, x_k \}$ with the following properties:
\begin{itemize}
\item $A(x_i)=n_i$, where $n_{-k}<n_{-k+1} <\cdots < n_{k-1}<n_k$ \\
\item $n_i = -n_{-i}$\\
\item If $i \equiv k \mod 2$, then $\del(x_i)=0$\\
\item If $i \equiv k+1 \mod 2$, then $\del(x_i) = x_{i-1} + U^{n_{i+1}-n_i}x_{i+1}$ \qed
\end{itemize}
\end{prop}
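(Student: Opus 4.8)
The plan is to reduce the statement to the large-surgery formula of Ozsv\'ath--Szab\'o and then read the staircase structure off from the rank-one condition it imposes. First I would observe that it suffices to treat large positive integral surgeries: if $S^3_p(K)$ is an $L$-space for some $p>0$, then the standard fact that the $L$-space surgery slopes form an interval unbounded above (equivalently, that $S^3_N(K)$ is an $L$-space for all $N\geq 2g(K)-1$) lets me pass to arbitrarily large $N$. For such $N$ the large-surgery formula identifies, in each spin$^c$ structure $[s]$,
\[ \widehat{HF}(S^3_N(K),[s]) \cong H_*(\widehat A_s), \qquad \widehat A_s := C\{\max(i,j-s)=0\}, \]
a subquotient complex of $CFK^\infty(K)$. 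The $L$-space hypothesis \eqref{lspacedef} then forces
\[ H_*(\widehat A_s)\cong \F \quad \text{for every } s\in\Z. \]

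Next I would extract the structure of the associated graded. The rank-one condition on all of the $\widehat A_s$ is rigid: a rank comparison between the $\widehat A_s$ and the vertical associated graded $\widehat{HFK}(K,j)=H_*(C\{i=0,\,j\})$ forces each graded piece to have rank at most one, so $\widehat{HFK}(K,j)\cong\F$ or $0$. This is the content of \cite[Theorem 1.2]{OSlens}. It tells me that a reduced (simultaneously horizontally and vertically simplified) basis of $CFK^\infty(K)$ has exactly one generator in each Alexander grading where $\widehat{HFK}$ is supported; I label these $x_{-k},\dots,x_k$ in increasing order of Alexander grading $n_{-k}<\cdots<n_k$, with $n_k=g(K)$ since knot Floer homology detects genus.

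I would then build the differential by tracking how the generator of $H_*(\widehat A_s)$ moves as $s$ increases. For $s\gg 0$ this generator is the vertical one (the class surviving in $\widehat{CF}(S^3)=C\{i=0\}$), while for $s\ll 0$ it is the horizontal one; each time $s$ crosses a grading at which $\widehat{HFK}$ is supported, preserving rank one forces a single arrow of the differential, and these arrows must alternate between horizontal and vertical. This yields exactly the stated pattern: the generators with $i\equiv k\pmod 2$ are the outer corners of the staircase and are cycles, while each $x_i$ with $i\equiv k+1\pmod 2$ carries a vertical arrow to $x_{i-1}$ (at Alexander grading $n_{i-1}<n_i$, hence with $U^0$) together with a horizontal arrow to $U^{n_{i+1}-n_i}x_{i+1}$ (which indeed lands at Alexander grading $n_i$), that is $\del(x_i)=x_{i-1}+U^{n_{i+1}-n_i}x_{i+1}$. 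Finally, the symmetry $n_i=-n_{-i}$ follows from the conjugation symmetry of $CFK^\infty(K)$, which exchanges the two $\Z$-filtrations and reflects the staircase about the diagonal, fixing the indexing to be symmetric about $0$.

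The step I expect to be the main obstacle is the rigidity argument of the third paragraph: converting the bare statement ``$H_*(\widehat A_s)\cong\F$ for all $s$'' into the explicit alternating staircase, with no extra generators or arrows. The difficulty is to rule out more complicated differentials that nonetheless produce rank-one homology in every $\widehat A_s$, and making this precise is exactly where I would lean most heavily on \cite[Theorem 1.2 and Corollary 1.6]{OSlens}, whose conclusion about the alternating $\pm1$ coefficients of the Alexander polynomial is equivalent to the staircase recorded here.
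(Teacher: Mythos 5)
Your proposal is correct in outline and takes essentially the same approach as the paper, which gives no independent proof of this proposition: it is stated as a restatement of \cite[Theorem 1.2 and Corollary 1.6]{OSlens} (in the form of \cite[Remark 6.6]{Homconcordance}), which is exactly the result you invoke at the step you identify as the main obstacle. Your surrounding sketch --- passing to large surgeries, the large-surgery identification $\widehat{HF}(S^3_N(K),[s])\cong H_*(\widehat A_s)$, the rank-one rigidity forcing $\widehat{HFK}(K,j)\in\{0,\F\}$ with alternating staircase differential, and conjugation symmetry giving $n_i=-n_{-i}$ --- is the standard route to that cited theorem, so there is no substantive divergence to report.
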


Diagramatically, the knot Floer complex of an $L$-space knot has a \textquotedblleft staircase" shape, as shown, for example, in Figure \ref{t34}. The basis described in Proposition \ref{lspacechar} is, in particular, horizontally simplified, and so the method of reduction will proceed exactly as in Section \ref{knotfloer}. We include a proof of the corollary below, although the result should be more readily evident by seeing the reduction in Figure \ref{t34}.

\begin{figure}
$$\xymatrixcolsep{0.75 pc} \xymatrixrowsep{0.75 pc}
\xymatrix{
& & & & &            & & &      & & &                                              & & & &            &                                           & & \underline{A} \\
& & & & \bu & x_2      & &  &    & & & \bu \dar@/_/[ddd]^U          & &  & &            & \bu \dar@/_/[ddd]^U       & &  3 \\
& & & \bu & \bu \ar[l] \ar[dd] & x_1   &  & &      & & &                    & & &  &            &                                          & & 2 \\
& & \bu & \bu \ar[l] \ar[dd] & &    & &  &     & & &                    & & &  &            &                                          & & 1 \\
& \bu & \bu \ar[l] \ar[dd] & & \bu & x_0    & & &    & & & \bu        & & & &            & \bu                                    & & 0 \\
\bu & \bu \ar[l] \ar[dd] & & \bu & &     & & &    & & \bu &        & & & &             \bu \dar@/_/[ddr]^U &      & & -1 \\
& & \bu & & \bu \ar[ll] \ar[d] & x_{-1}            & & &    & \bu & & \bu \ar[ll] \ar[d]    & & & &     &                          & & -2 \\
& \bu \da[dl] & & \bu \ar[ll] \ar[d] & \bu & x_{-2}      & & &     \bu & & \bu \ar[ll] \ar[d] & \bu   & & & &    & \bu                    & & -3 \\
& & & \bu & &        & &  &                              & \da[dl] & \bu &            & & & &             \bu \da[dl] &                                  & & -4 \\
& & & &              & & &                                & & &                                 & & & &                  &                           & & & -5
}$$
$$\xymatrix{
& CFK^-(T(3,4)) & & & & & \underline{CFK}^-(T(3,4)) &
}$$
\caption{At the left is the staircase-shaped knot Floer complex for the (3,4)-torus knot, and at the right is its reduction (with an intermediate step shown in between). Each time we cancel the horizontal arrow from $x_{2-2i-1}$ to $x_{2-2i}$, we see that $U$ takes the last remaining power of $x_{2-2i}$ to $x_{2-2i-2}$, so, in fact, every generator which remains in $\underline{CFK}^-(T(3,4))$ can be written as $U^kx_2$ for some $k\geq 0$.}
\label{t34}
\end{figure}
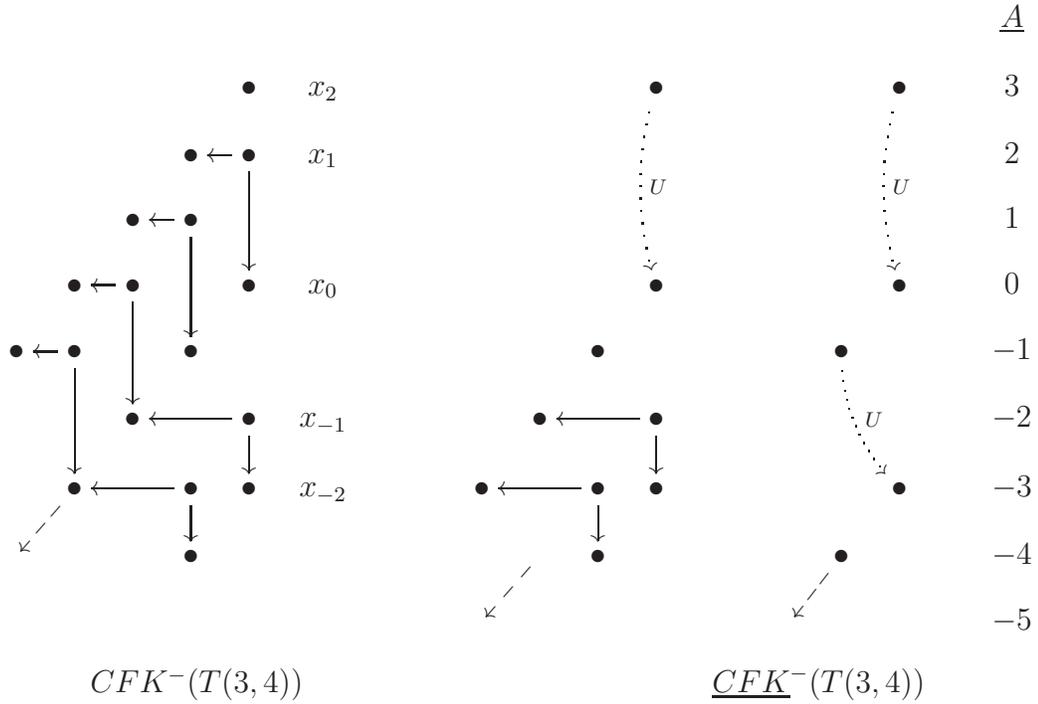

\begin{corollary}
\label{lspacered}
If $K$ is an $L$-space knot, then $\underline{CFK}^-(K)$ has exactly one generator of Maslov grading $-2i$ for each $i\geq 0$, and no other generators. Further, if $x$ is the generator with Maslov grading $-2i$, and $y$ is the generator with Maslov grading $-2i-2$, then $Ux=y$.
\end{corollary}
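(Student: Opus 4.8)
The plan is to run the reduction procedure of Section \ref{knotfloer} directly on the horizontally simplified basis $\{x_{-k},\dots,x_k\}$ supplied by Proposition \ref{lspacechar}, cancelling the staircase from the top downward and tracking both the surviving generators and the induced $U$-map. The first step is to identify the horizontal differential explicitly. Generators with $i\equiv k \pmod 2$ are cycles, while those with $i\equiv k+1\pmod 2$ satisfy $\del x_i = x_{i-1}+U^{n_{i+1}-n_i}x_{i+1}$; since $A(x_{i-1})=n_{i-1}<n_i$ but $A(U^{n_{i+1}-n_i}x_{i+1})=n_i=A(x_i)$, the only horizontal arrows are $\del_H x_i=U^{n_{i+1}-n_i}x_{i+1}$ for $i\equiv k+1$. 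Consequently each cycle generator $x_m$ with $m>-k$ is the target of exactly one horizontal arrow, coming from $x_{m-1}$; each non-cycle generator is the source of exactly one; and $x_{-k}$ is hit by none.

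Next I would cancel these arrows as in Lemma \ref{che}, proceeding from the top. Cancelling $\del_H x_{m-1}=U^{n_m-n_{m-1}}x_m$ deletes every power of the non-cycle generator $x_{m-1}$ together with the powers $U^{n_m-n_{m-1}}x_m,U^{n_m-n_{m-1}+1}x_m,\dots$, leaving $x_m,Ux_m,\dots,U^{n_m-n_{m-1}-1}x_m$. The essential computation is the analogue of \eqref{U_1}: on the last surviving power the induced $U$-map gives
\[ U\!\left(U^{n_m-n_{m-1}-1}x_m\right)=\pi\!\left(U^{n_m-n_{m-1}}x_m+\del x_{m-1}\right)=\pi(x_{m-2})=x_{m-2}, \]
because the two copies of $U^{n_m-n_{m-1}}x_m$ cancel over $\F$ and $x_{m-2}$ has not yet been removed. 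Thus the $U$-map threads the surviving powers of each cycle generator $x_m$ onto the single power $x_{m-2}$ of the next cycle generator below it; since $x_{-k}$ receives no horizontal arrow, all of its powers survive and the tower continues to $-\infty$.

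Iterating from $x_k$ down to $x_{-k}$, every surviving generator has the form $U^a x_m$ for a cycle generator $x_m$, each such element is a cycle in $CFK^-(K)$ and hence carries trivial reduced differential, and the $U$-map identifies the entire surviving set with a single free $\F[U]$-tower generated by $x_k$. Therefore $\underline{CFK}^-(K)\cong\F[U]$ with $U$ acting by the module structure, one generator in each Maslov grading $M(x_k),M(x_k)-2,\dots$. By Remark \ref{u1deg} the induced $U$-map lowers the Maslov grading by exactly $2$, so these are consecutive even integers; comparing the homology of this zero-differential complex with $H_*(CFK^-(K))\cong\F[U]$ supported in gradings $-2i$, $i\ge 0$, forces $M(x_k)=0$. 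This yields exactly one generator in each grading $-2i$ and $Ux=y$ for consecutive generators, as claimed.

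The main obstacle is the cascade bookkeeping in the middle step: verifying that, stage by stage, the induced $U$-map carries the bottom survivor of one cycle generator precisely onto the next cycle generator, that the horizontal terms cancel, and that no spurious differential is introduced. Working strictly top-down makes this manageable, since the chain homotopies from earlier cancellations are supported on already-deleted generators and so leave $\del x_{m-1}$ untouched. Once this cancellation pattern is pinned down, the grading count and the identification with $\F[U]$ follow formally.
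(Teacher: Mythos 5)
Your construction is correct and is essentially the same as the first half of the paper's proof: you identify the horizontal differential $\del_H x_{i}=U^{n_{i+1}-n_i}x_{i+1}$ from the explicit form in Proposition \ref{lspacechar}, cancel top-down, and your key computation
\[ U\bigl(U^{n_m-n_{m-1}-1}x_m\bigr)=\pi\bigl(U^{n_m-n_{m-1}}x_m+\del x_{m-1}\bigr)=x_{m-2} \]
is precisely the paper's calculation in the style of \eqref{U_1}; your observation that the earlier chain homotopies are supported on already-deleted generators, so that $\del x_{m-1}$ is untouched at the relevant stage, is the correct justification for the cascade. Your method of pinning the absolute grading differs slightly but harmlessly: you compare the zero-differential reduced complex with $H_*(CFK^-(K))\cong \F_{(-2i)}$, $i\geq 0$, to force $M(x_k)=0$, whereas the paper gets $M(x_k)=0$ directly from the fact that $x_k$ generates $H_*(\widehat{CFK}(K))\cong \widehat{HF}(S^3)$, whose generator has Maslov grading zero by convention. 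Both are valid, since the reduction maps are graded of degree zero.

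There is, however, a genuine gap: you have shown that \emph{one} representative of $\underline{CFK}^-(K)$ --- the one produced from the staircase basis --- has the stated form, but by Definition \ref{reduced} the reduced complex is only well-defined up to $(\Z,U)$-filtered chain homotopy equivalence among complexes with no horizontal arrows, and the corollary asserts the stated form for \emph{every} representative. This is not a pedantic point: in the proof of Theorem \ref{prime}, the reduced representative of $CFK^-(K_1\# K_2)$ arises by a different route (reduce $K_1$, tensor with $CFK^-(K_2)$, reduce the product), and the contradiction there requires knowing that \emph{no} reduced representative of an $L$-space knot can contain two generators whose Maslov gradings have opposite parity. The paper closes this with a short uniqueness argument that your proposal omits: for any representative $C'$ with no horizontal arrows, each subquotient row $C'\{j=k\}$ has trivial differential and homology isomorphic to that of the constructed complex's row (either $0$ or $\F$), so each row has at most one generator, in even homological grading; hence $C'$ has trivial differential and is isomorphic to the constructed complex as a filtered complex, and the $(\Z,U)$-filtered equivalence then forces the $U$-maps to agree as well. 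You should append this step to make the argument complete.
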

\begin{proof}[\sc Proof]
We will first show that if we use a basis as in Proposition \ref{lspacechar}, and proceed with the reduction as described in Section \ref{knotfloer}, then we get a representative of \( \underline{CFK}^-(K)\) with the desired properties. We will then show that, in fact, any representative must be isomorphic to this one.

First, we consider the subquotient complex \[ \widehat{CFK}(K) := C\{i=0\}. \] This is a filtered complex which is chain homotopy equivalent to $\widehat{CF}(S^3)$, and so its homology is generated by a single element. To fix an absolute Maslov grading, this generator of the homology of $\widehat{CF}(S^3)$ is declared to have Maslov grading zero. It is clear from the explicit differential given in Propostion \ref{lspacechar} that, when $K$ is an $L$-space knot, $x_k$ generates the homology of $\widehat{CFK}(K)$, and so $M(x_k)=0.$

With this as our starting point, we now consider what happens through reduction. To simplify notation, we will define \[r_i := n_i-n_{i-1} \] to be the difference in the Alexander gradings of $x_i$ and $x_{i-1}$.  We first cancel the horizontal arrows which form the top steps of the staircases, those corresponding to \[ \del_H(U^mx_{k-1}) = U^{m+r_k}x_k\] for $m\geq 0$. Note that after canceling $U^{r_k}x_k$, we have 
\begin{align*}
U_1(U^{r_k-1}x_k) =& \ \left( \pi\circ (U-\del h_1 U)\circ\iota\right) (U^{r_k-1}x_k) \\
=& \ \pi\circ (U^{r_k}x_k-\del h_1 (U^{r_k}x_k)) \\
=& \ \pi\circ (U^{r_k}x_k- \del x_{k-1}) \\
=& \ x_{k-2}
\end{align*}
That is, after canceling higher $U$-powers of $x_k$, the map $U$ takes the highest remaining power, $U^{r_k-1}x_k$, to the next generator, $x_{k-2}$. The exact same argument applies to the cancellation of the horizontal arrows \[\del_H (x_{k-3}) = U^{r_{k-2}}x_{k-2}, \] and we see that \[ U_2(U^{r_{k-2}-1}x_{k-2}) = x_{k-4}.\] We proceed in this fashion, until finally we see that \[ U_k(U^{r_{-k+2}-1}x_{-k+2}) = x_{-k}.\] It follows that each of these generators of $\underline{CFK}^-(K)$ can be written as \[U^ix_k \] for some $i\geq 0$. Since multiplication by $U$ lowers the Maslov grading by 2, the result follows.

Now let us call the reduced complex just constructed $C$, and suppose that $C'$ is a $(\Z,U)$-filtered chain homotopy equivalent complex which also has no horizontal arrows (i.e., a different representative of \( \underline{CFK}^-(K)\)). Since it is filtered chain homotopy equivalent, each subquotient complex \( C'\{j=k\} \) must have homology isomorphic to that of \( C\{j=k\} \), which is either 0 or $\F$, depending on $k$. Since there are no horizontal arrows, these subquotient complexes have trivial differential, so they either have no generators or they have exactly 1 generator, with even homological grading. Therefore, $C'$ also has trivial differential, so it is isomorphic to $C$ as a filtered chain complex. The fact that the equivalence is $(\Z, U)$-filtered implies that $U$ also takes generator to generator for $C'$ as it does for $C$, so in fact they are isomorphic as $(\Z,U)$-filtered complexes. \qedhere 
\end{proof}
A concise way of stating Corollary \ref{lspacered} is that, for an $L$-space knot $K_1$, 
\begin{equation}
\label{lspacefu}
\underline{CFK}^-(K_1)\cong \F[U]_{(0)},
\end{equation}
where the subscript here means that the generator has Maslov grading zero. Necessarily, this complex also has trivial differential.
This means that a tensor product of the form 
\begin{equation}
\label{lspacetensorfu}
\underline{CFK}^-(K_1)\otimes_{\F[U]} CFK^-(K_2)
\end{equation}
will be isomorphic to $CFK^-(K_2)$ as a chain complex, which we will make use of below. It is important to note however, that the isomorphism \eqref{lspacefu} is not filtered (when $K_1$ is not the unknot). As a result, the tensor product \eqref{lspacetensorfu} is not $(\Z,U)$-filtered chain homotopy equivalent to $CFK^-(K_2)$.


Knowing that $L$-space knots must have this particularly simple reduced knot Floer complex, we make use of the behavior under connected sums to record the following observation.

\begin{theorem}
\label{prime}
A knot in $S^3$ which admits an $L$-space surgery must be a prime knot.
\end{theorem}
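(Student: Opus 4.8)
The plan is to argue by contradiction: suppose $K = K_1 \# K_2$ is a nontrivial connected sum admitting an $L$-space surgery, and derive a contradiction with Corollary \ref{lspacered}. Since $K$ itself admits an $L$-space surgery, Corollary \ref{lspacered} (equivalently, the isomorphism \eqref{lspacefu}) tells us that $\underline{CFK}^-(K) \cong \F[U]_{(0)}$, a complex with exactly one generator in each nonpositive even Maslov grading and trivial differential. The strategy is to show that if \emph{both} $K_1$ and $K_2$ are nontrivial, then $\underline{CFK}^-(K_1 \# K_2)$ cannot be this thin, because each nontrivial factor forces extra generators to survive reduction.

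First I would reduce $\underline{CFK}^-(K_1 \# K_2)$ using Remark \ref{reducefirst} and Theorem \ref{tensor}: it is computed by reducing $CFK^-(K_1)$ to $\underline{CFK}^-(K_1)$, tensoring with $CFK^-(K_2)$ over $\F[U]$, and reducing once more. The key qualitative fact, observed in Section \ref{knotfloer}, is that for any \emph{nontrivial} knot $K$, the reduced complex $\underline{CFK}^-(K)$ must have more than one generator surviving in the relevant range — the horizontal arrow that would cancel certain generators has been ``cut off'' by passing to the subcomplex $CFK^-(K)$ rather than $CFK^\infty(K)$. Concretely, a nontrivial knot has $\widehat{HFK}$ supported in at least two distinct Alexander gradings, so $\underline{CFK}^-(K_i)$ contains at least two generators that are not simply $U$-translates lying in a single $\F[U]$-tower. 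The main step is then to show that tensoring two such complexes, each with a ``branching'' beyond a single tower, produces generators in $\underline{CFK}^-(K_1 \# K_2)$ that cannot all collapse to the single-tower form $\F[U]_{(0)}$.

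I expect the cleanest way to make this precise is through homological gradings and the structure of the differential on the tensor product. Because $\underline{CFK}^-(K_1)$ for a nontrivial $K_1$ has (at least) a generator $x$ with $Ux$ \emph{not} equal to the expected next tower generator — i.e., the map $U$ bends the filtration by more than one, as in \eqref{bentU} — tensoring against the full nontrivial complex $CFK^-(K_2)$ creates pairs of generators in the same Maslov grading, or generators in odd Maslov grading, neither of which can occur in $\F[U]_{(0)}$. The contradiction comes from comparing the set of Maslov gradings of surviving generators: $\underline{CFK}^-(K_1 \# K_2)$ would have to have exactly one generator per nonpositive even grading, but the product structure forces either a repeated grading or a generator outside the even-graded tower.

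The hard part, and the step I would spend the most care on, is controlling precisely which generators survive the final reduction of the tensor product. Reduction only cancels \emph{horizontal} arrows, so I must verify that the ``extra'' generators contributed by the two nontrivial factors are genuinely not paired by any horizontal differential after taking the tensor product — that their survival is robust and not an artifact of the chosen basis. Equivalently, since $\underline{CFK}^-(K)$ is well-defined only up to $(\Z,U)$-filtered chain homotopy equivalence, I must phrase the obstruction in an invariant way: the simplest is that $\underline{CFK}^-(K) \cong \F[U]_{(0)}$ forces $\dim_{\F} \widehat{HFK}(K) = 1$, whence $K$ is the unknot by the fact that knot Floer homology detects the unknot; but each nontrivial factor makes $\dim_{\F} \widehat{HFK}(K_1 \# K_2) \geq 4 > 1$ by the Künneth formula for $\widehat{HFK}$. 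This rank computation is the robust, basis-independent form of the argument, and reconciling it with the explicit reduction picture in Figure \ref{t34} is where I would focus the verification.
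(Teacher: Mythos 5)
Your proposal has a genuine gap, and it is located precisely where you placed your confidence: the final ``robust, basis-independent'' step is false. You claim that $\underline{CFK}^-(K)\cong \F[U]_{(0)}$ forces $\dim_{\F}\widehat{HFK}(K)=1$, whence $K$ is the unknot by unknot detection. But Corollary \ref{lspacered} says that \emph{every} $L$-space knot has $\underline{CFK}^-(K)\cong\F[U]_{(0)}$ --- for instance the right-handed trefoil, whose $\widehat{HFK}$ has rank $3$. If your implication held, it would prove that no nontrivial knot admits an $L$-space surgery, which is absurd. The error is that the rank of $\widehat{HFK}(K)$ is not visible from the reduced complex as an $\F[U]$-module: the reduction cancels exactly the horizontal arrows that account for the extra generators of $\widehat{HFK}$, and the only trace they leave is in the \emph{filtration}, through the non-homogeneous jumps of the $U$-map. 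The paper stresses this when it warns that the isomorphism \eqref{lspacefu} is not filtered. For the same reason, your intermediate claim that any nontrivial knot has $\underline{CFK}^-(K_i)$ containing generators outside a single $\F[U]$-tower is false for every nontrivial $L$-space knot.

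The earlier part of your sketch --- that the bent $U$-map \eqref{bentU} should force, after tensoring, a surviving generator of odd Maslov grading, contradicting Corollary \ref{lspacered} --- is in fact the paper's actual argument, but you explicitly deferred the hard step (which generators survive the final reduction) to the faulty rank computation, so no working proof of survival remains in the proposal. The paper makes the survival quantitative: choose horizontally simplified bases with $\del_H(y_1)=U^{r_1}x_1$ in $CFK^-(K_1)$ and $\del_H(Y_1)=U^{R_1}X_1$ in $CFK^-(K_2)$, and assume without loss of generality $r_1\le R_1$. After reducing the first factor, $F_1(U^{r_1}x_1)<F_1(x_1)-r_1$, hence $F_1(U^{R_1}x_1)<F_1(x_1)-R_1$, and therefore $F(U^{R_1}x_1\otimes X_1)<F(x_1\otimes Y_1)$ in the tensor product. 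This shows $\del_H(x_1\otimes Y_1)=0$, so $x_1\otimes Y_1$ survives reduction alongside $x_1\otimes X_1$, and since $M(x_1\otimes Y_1)=M(x_1\otimes X_1)+1-2R_1$, the reduced complex of $K_1\# K_2$ has two generators with Maslov gradings of opposite parity --- the contradiction with Corollary \ref{lspacered}. The normalization $r_1\le R_1$ is the filtration comparison your sketch lacks, and it is what makes the obstruction robust rather than basis-dependent. You also omit the reduction of negative $L$-space surgeries to positive ones via mirrors, which is needed before invoking Corollary \ref{lspacered}, though that is minor next to the main gap.
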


The general argument, a proof by contradiction using Corollary \ref{lspacered}, will be suggested by considering an example, so we refer back to Figure \ref{trefoilsum}. Consider the generator of maximal Alexander grading in $CFK^-(K_1)$, which we labeled $x$. We have $A(x)=1$, and so \[A(Ux)=0=A(x)-1.\] But $Ux$ is canceled by a horizontal arrow, so when we move to $\underline{CFK}^-(K_1)$, we have \[A(Ux)=-1 < A(x)-1\] That is, multiplication by $U$ \textquotedblleft jumps" down by more than 1. When we take the tensor product \[\underline{CFK}^-(K_1)\otimes_{\F[U]}CFK^-(K_2), \] this has the effect of \textquotedblleft bending downward" the horizontal arrow from $c$ to $Ub$. That is, the arrow from $x\otimes c$ to $Ux\otimes b$ is not horizontal. Therefore, when we reduce this tensor product complex, we cannot cancel these generators, and so they both remain. But, being connected by an arrow, their Maslov gradings differ by exactly 1, so the complex $\underline{CFK}^-(K_1\# K_2)$ does not have the form described in Corollary \ref{lspacered}. With this example as motivation, we provide the details.

\begin{proof}[\sc Proof of Theorem \ref{prime}]
We begin by noting that if {\it{negative}} surgery on a knot produces an $L$-space, then positive surgery on its mirror image produces an $L$-space, so by definition its mirror image is an $L$-space knot. Since a knot is a nontrivial connected sum if and only if its mirror image is, it will be sufficient to show that {\it{positive}} surgery on a connected sum can never produce an $L$-space. That is, we will show that no nontrivial connected sum is an $L$-space knot.

Suppose $K_1$ and $K_2$ are two nontrivial knots in $S^3$. Let $C_i$ denote the complex $CFK^-(K_i)$. Choose a horizontally simplified basis for $C_1$, where $y_1$ and $x_1$ are generators such that $\del_H(y_1)=U^{r_1}x_1$. Likewise, choose a horizontally simplified basis for $C_2$, where $Y_1$ and $X_1$ are generators such that $\del_H(Y_1)=U^{R_1}X_1$. Without loss of generality, assume that $r_1\leq R_1$. Our goal is to show by contradiction that $\underline{CFK}^-(K_1\# K_2)$ is a complex which cannot correspond to an $L$-space knot, by Corollary \ref{lspacered}.

We will make use of Remark \ref{reducefirst}, and begin by reducing $C_1$ to get a complex $\underline{C_1}$. We will denote the filtration on this reduced complex by $F_1$ and the filtration on $C_2$ by $F_2$.  Finally, the filtration on the tensor product $\underline{C_1}\otimes_{\F[U]} C_2$, defined as in equation \eqref{tensorfiltration}, will be denoted $F$.

After reducing $C_1$, we have $$F_1(U^{r_1-1}x_1)= F_1(x_1)-(r_1-1),$$ but the map $U$ on $U^{r_1-1}x_1$ lowers the filtration level by at least two, so $$F_1(U^{r_1}x_1) < F_1(x_1)-r_1.$$ As a consequence, since $R_1\geq r_1$, and $U$ always lowers the filtration level by at least 1, $$F_1(U^{R_1}x_1) < F_1(x_1)-R_1.$$ Note also that, on the freely generated complex $C_2$, $U$ is a homogeneous map of degree 1, so $$F_2(Y_1)=F_2(X_1)-R_1.$$ It follows that
\begin{align*}
F(U^{R_1}x_1\otimes X_1) =&\ F_1(U^{R_1}x_1) + F_2(X_1)\\
<&\ F_1(x_1)-R_1 + F_2(X_1)\\
=&\ F_1(x_1)+F_2(Y_1)\\
=&\ F(x_1\otimes Y_1).
\end{align*}
Because of this, every term in \[\del (x_1\otimes Y_1) = \del x_1\otimes Y_1 + x_1\otimes \del Y_1 \] has filtration level strictly less than that of $x_1\otimes Y_1$. That is to say, $\del_H(x_1\otimes Y_1) = 0$. When we reduce the tensor product complex, there is no horizontal differential to cancel $x_1\otimes Y_1$, so it will project to a nonzero homogeneous element in $\underline{CFK}^-(K_1 \# K_2)$. Of course, $x_1\otimes X_1$ also projects to a nonzero homogeneous element in
$\underline{CFK}^-(K_1 \# K_2)$. But,
\begin{align*}
M(x_1\otimes Y_1) =&\ M(x_1)+M(Y_1)\\
=&\ M(x_1) + M(U^{R_1}X_1)+1 \\
=&\ M(x_1) + M(X_1)+1-2R_1 \\
=&\ M(x_1\otimes X_1)+1-2R_1,
\end{align*}
so $\underline{CFK}^-(K_1 \# K_2)$ has two elements with Maslov gradings of opposite parity. By Corollary \ref{lspacered}, $K_1 \# K_2$ cannot be an $L$-space knot. 
\end{proof}

We now turn to our second application, pertaining to the Heegaard Floer correction terms, or $d$-invariants. Given a rational homology three-sphere $Y$ and spin$^{c}$ structure $\mf{t}$, we obtain a chain complex $CF^{\infty}(Y,\mf t)$ which is freely generated over $\F[U,U^{-1}]$, as described in Section \ref{knotfloer}, and its associated subcomplex $CF^-(Y,\mf t)$. The homology of this subcomplex, denoted $HF^-(Y,\mf t)$, consists of a direct summand isomorphic to $\F [U]$, and possibly other terms which are $U$-torsion. The correction term associated to $(Y,\mf t)$, denoted $d(Y,\mf t)$, is simply the maximal Maslov grading of any nontorsion generator in $HF^-(Y,\mf t)$. 

 Given a knot $K$ in $S^3$, let $S^3_1(K)$ denote the integer homology sphere obtained from $S^3$ by doing Dehn surgery along $K$ with slope 1. We can associate to this manifold a number, $d(S_1^3(K),\mf{t})$ (where $\mf{t}$ is the unique spin$^c$ structure on $S_1^3(K)$), which we will abbreviate as $d_1(K)$. This invariant was studied by Peters in \cite{Peters}, where it was shown to have the following properties.

\begin{prop}[Theorem 1.5 and Proposition 2.1 of \cite{Peters}] 
\label{d1prop}
For any knot $K$ in $S^3$,
\begin{itemize}
\item $d_1(K)$ is an even integer\\
\item $d_1(K)$ is a concordance invariant of $K$\\
\item  If we denote by $g_4(K)$ the smooth four-dimensional genus of $K$, \[ 0\leq -d_1(K) \leq 2g_4(K). \] 
\end{itemize}
\end{prop}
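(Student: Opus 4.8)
The plan is to package all three statements around a single non-negative integer extracted from the surgery formula. First I would use the integer surgery formula of Ozsv\'ath--Szab\'o \cite{OSinteger} (refined for rational surgeries by Ni--Wu \cite{NiWu}) to express $HF^-(S^3_1(K))$, and hence $d_1(K)$ through the definition \eqref{ddef2}, in terms of $CFK^\infty(K)$. Concretely, for $N$ large the ``large surgery'' isomorphism identifies the relevant summand of $HF^+(S^3_N(K))$ with $H_*(A_0^+)$, where $A_0^+ = C\{\max(i,j)\geq 0\}$, and the natural map $v_0 \colon A_0^+ \to CF^+(S^3)$ shifts the bottom non-torsion tower down by a non-negative even amount which we call $2V_0(K)$. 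Tracking gradings through the mapping cone that computes $+1$-surgery then yields the identity $d_1(K) = -2V_0(K)$ with $V_0(K)\in\Z_{\geq 0}$.

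Granting this identity, the first bullet and the lower inequality of the third are immediate: since $V_0(K)$ is a non-negative integer, $d_1(K) = -2V_0(K)$ is a non-positive even integer, so $d_1(K)\in 2\Z$ and $0\leq -d_1(K)$. The only point needing care is the grading normalization; with the convention of Remark \ref{convention} one checks that the two towers being compared both lie in even Maslov grading, so that $V_0$ is an honest integer rather than a half-integer.

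For concordance invariance I would pass to four dimensions and use that $d$ is an invariant of integral homology cobordism. If $K$ and $K'$ are smoothly concordant via an annulus $A\subset S^3\times[0,1]$, then performing $+1$-framed surgery along $A$ produces a smooth cobordism $W$ from $S^3_1(K)$ to $S^3_1(K')$; a Mayer--Vietoris computation shows $W$ is an integral homology cobordism. Because homology-cobordant rational homology spheres have equal correction terms, $d_1(K)=d_1(K')$, which is the second bullet.

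The remaining step, the upper bound $-d_1(K)=2V_0(K)\leq 2g_4(K)$, is the genuinely hard and genuinely four-dimensional one. Here I would take a smoothly embedded genus-$g$ surface $\Sigma\subset B^4$ with $g=g_4(K)$ and $\del\Sigma=K$, and use it to build a cobordism whose induced map on $HF^-$ controls how far down the non-torsion tower is pushed, the aim being to show the tower survives after at most $g$ multiplications by $U$, i.e. $V_0(K)\leq g_4(K)$, via an adjunction-type estimate on the grading shift of the cobordism map. \emph{This grading-shift estimate is the main obstacle}: the algebraic parts above are formal once $d_1=-2V_0$ is in hand, but bounding $V_0$ by the slice genus requires genuine input from the geometry of $\Sigma$ together with careful bookkeeping of the absolute gradings fixed in Remark \ref{convention}.
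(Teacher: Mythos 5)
The paper itself offers no proof of this proposition: it is quoted verbatim from \cite{Peters} (Theorem 1.5 and Proposition 2.1 there), and the only related material in the paper is the recounting of Peters' computational scheme via \eqref{d1n} and \eqref{shift}. Measured against Peters' actual arguments, your first two steps are correct and are essentially his. The large-surgery identification of $HF^+(S^3_N(K),\mf{s}_0)$ with $H_*(A_0^+)$, combined with the grading shift in \eqref{d1n}, expresses $d_1(K)$ as the minimal grading of a nontorsion element of $H_*(A_0^+)$, which is your $-2V_0(K)$; since that element sits in even grading and $V_0(K)\geq 0$, the first bullet and the inequality $d_1(K)\leq 0$ follow. (The appeal to Ni--Wu is unnecessary, and anachronistic relative to \cite{Peters}, but harmless.) Likewise, concordance invariance by surgering the annulus $A\subset S^3\times[0,1]$ into an integral homology cobordism from $S^3_1(K)$ to $S^3_1(K')$ and invoking the homology-cobordism invariance of $d$ from \cite{OSabsgr} is exactly Peters' proof.

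The genuine gap is the upper bound $-d_1(K)\leq 2g_4(K)$, which you explicitly defer (``the grading-shift estimate is the main obstacle''). That estimate is not bookkeeping; it is the substantive content of Peters' Theorem 1.5, and nothing set up earlier in your proposal produces it --- in particular, the identity $d_1=-2V_0$ gives no relation between $V_0$ and any surface in $B^4$. What is required is a genuinely four-dimensional argument modeled on the proof of $|\tau(K)|\leq g_4(K)$ in \cite{OSfourballgenus}: cap a genus-$g_4(K)$ slice surface with the core of the $+1$-framed two-handle to obtain a closed surface $\hat F$ of square $+1$ in the surgery trace; excise a tubular neighborhood of $\hat F$ to get a cobordism, with vanishing rational second homology, from the Euler-number-one circle bundle over a genus-$g_4(K)$ surface to $S^3_1(K)$; then combine the correction-term inequalities for bounding four-manifolds with the computation of the relevant correction terms of such circle bundles, both from \cite{OSabsgr}. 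The conclusion you aim for, $V_0(K)\leq g_4(K)$ (``the tower survives after at most $g$ multiplications by $U$''), is true, but it is exactly this input that delivers it. As written, your proposal establishes only the first two bullets together with $d_1(K)\leq 0$, and the third bullet's upper bound remains unproven.
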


In addition, Peters gave an algorithm to compute $d_1(K)$ from $CFK^{\infty}(K)$, using the fact that $CFK^{\infty}(K)$ contains all the information needed to compute the Heegaard Floer homology of manifolds arising from surgery on $K$. We briefly recount the idea here. For details, see \cite{Peters}.

In \cite[Lemma~7.11]{OSabsgr}, the degrees of the maps in the integer surgery exact sequence \[\cdots \rightarrow HF^+(S^3_0(K))\rightarrow HF^+(S^3_N(K)) \rightarrow HF^+(S^3) \rightarrow \cdots\] were computed, from which it was concluded in \cite[Sec.~5]{Peters} that\footnote{For three-manifolds with $H_1(Y)\cong \Z $, Ozsv\'ath and Szab\'o define $d_{\pm 1/2}(Y)$ to be the minimal grading of an element in $HF^+(Y,\mf{s_0})$ which is in the image of $U^k$ for all $k>0$ whose grading is additionally congruent to $\pm 1/2 \mod 2$, where $\mf{s_0}$ is the unique spin$^c$-structure for which $c_1(\mf{s_0})=0$.} 
\begin{equation}
d_{1/2}(S^3_0(K))=d(S^3_N(K),\mathfrak{s}_0)-\frac{N-3}{4}.
\label{d0n}
\end{equation}
In particular, we have
\begin{equation}
d_1(K) = d(S^3_N(K),\mathfrak{s}_0) - \frac{N-1}{4},
\label{d1n}
\end{equation}
so the invariant $d_1(K)$ is determined by $d(S_N^3(K),\mathfrak{s}_0)$. For $N$ sufficiently large, this can be computed directly from $CFK^{\infty}(K)$.

Let $A_0^+$ denote the quotient complex \[ C\{i\geq 0 \text{ or } j\geq 0\}\] of $CFK^{\infty}(K)$ (recall that $C\{ S\}$ denotes the elements with $(i,j)$-coordinates in $S$, and the arrows between these elements). Ozsv\'ath and Szab\'o \cite[Corollary 4.2]{OSknot} (c.f., \cite{Rasmussenknot}) show that, for any sufficiently large positive integer $N$,
\begin{equation}
\label{shift}
HF^+_{l+\left(\frac{N-1}{4}\right)}(S^3_N(K),\mf{s}_0) \cong H_l (A_0^+ ).
\end{equation}
That is, up to a shift in grading which depends on $N$, the homology of this complex is the Heegaard Floer homology of the three-manifold obtained by surgery. Combining equations \eqref{d1n} and \eqref{shift}, we see that the grading shifts cancel nicely, and $d_1(K)$ is equal to the minimum grading of a generator of $H_*(A_0^+)$ which is in the image of $U^k$ for all $k>0$.

\begin{remark}
It should be pointed out that, by \eqref{d0n}, we could get the same information from the invariant $d(S^3_N(K),\mf{s}_0)$, which is also a concordance invariant, as we get from $d_1(K)$. The choice $N=1$ is a matter of convenience, because it gives a four-genus bound without any shift.
\end{remark}

We will find it convenient to work with the subcomplex $CFK^-(K)$ rather than the quotient $CFK^+(K)$. From this point of view, $d_1(K)$ is the {\it{maximum}} grading of a non-torsion generator of homology of the subcomplex \[C\{i\leq 0 \ \ \text{and} \ \ j\leq 0\}. \] 

\begin{remark}
To justify this this point of view, we first point out that Ozsv\'ath and Szab\'o define $d^-(Y,\mf t)$ to be the maximal grading of a non-torsion generator in $HF^-(Y,\mf t)$, and observe in the proof of \cite[Prop.~4.2]{OSabsgr} that \[d^-(Y,\mf t) = d(Y,\mf t) -2\] (recalling that $U$ lowers grading by 2). But our definition of $CF^-$ differs from Ozsv\'ath and Szab\'o's by a shift by $U^{-1}$ (see Remark \ref{convention}). So, the maximal grading of a non-torsion generator of our $CF^-(Y,\mf{t})$ is \[d'(Y,\mf t) = d^-(Y,\mf t) + 2 = d(Y, \mf t), \] so we will think of the $d$-invariant this way.
\end{remark}

Figure \ref{d1T(2,5)} shows how $d_1$ can be computed from the knot Floer complex in the case of the (2,5)-torus knot.

\begin{figure}
$$\xymatrixcolsep{0.25 pc}\xymatrixrowsep{0.75 pc}
\xymatrix{
& & & & & & \!\!\!\! (0) \!\!\!\! & \bu & \ \ & \bu \ar[ll] \ar[dd] & & & & 2 \\
& & & & & & & & & & \da[ru] & & & \\
& & & & \! \!\!\! (-2)\!\!\! \! & \bu & & \bu \ar[ll] \ar[dd] & & \bu & \ \  & \bu \ar[ll] \ar[dd] & & 1 \\
& & & & & & & & \ar@{--}[llllllll] \ar@{--}[dddddddd] & & & & & \\
& & \!\!\!\! (-4)\!\!\!\! & \bu & & \bu \ar[ll] \ar[dd] & & \bu & & \bu \ar[dd] \ar[ll] & & \bu & & j=0 \\
& & & & & & \ar@{.}[llllll] \ar@{.}[dddddd] & & & & & & &  \\
(-6)\!\!\!\! & \bu & & \bu \ar[ll] \ar[dd] & & \bu & & \bu \ar[ll] \ar[dd] & & \bu & & & & -1 \\
& & & & & & & & & & & & & \\
& & & \bu & & \bu \ar[ll] \ar[dd] & & \bu & & & & & & -2 \\
& & \da[dl] & & & & & & & & & & & \\
& & & & & \bu & & & & & & & & -3 \\
& & & & & & & & & & & & & \\
& -3 & & -2 & & -1 & & i=0 & & 1 & & 2 & 
}$$
\caption{The complex $CFK^{\infty}(T(2,5))$. The invariant $d_1(T(2,5))$ can be seen to equal $-2$, either by considering the maximal grading of a generator of homology of the subcomplex $C\{i\leq 0  \ \text{and}  \ j\leq 0\}$ (below and to the left of the dashed line); or by considering the minimal grading of a generator of homology of the quotient complex $C\{i\geq 0  \ \text{or} \ j\geq 0\}$ (above or to the right of the dotted line). We will find it easier to work with the subcomplex through the rest of this paper.}
\label{d1T(2,5)}
\end{figure}
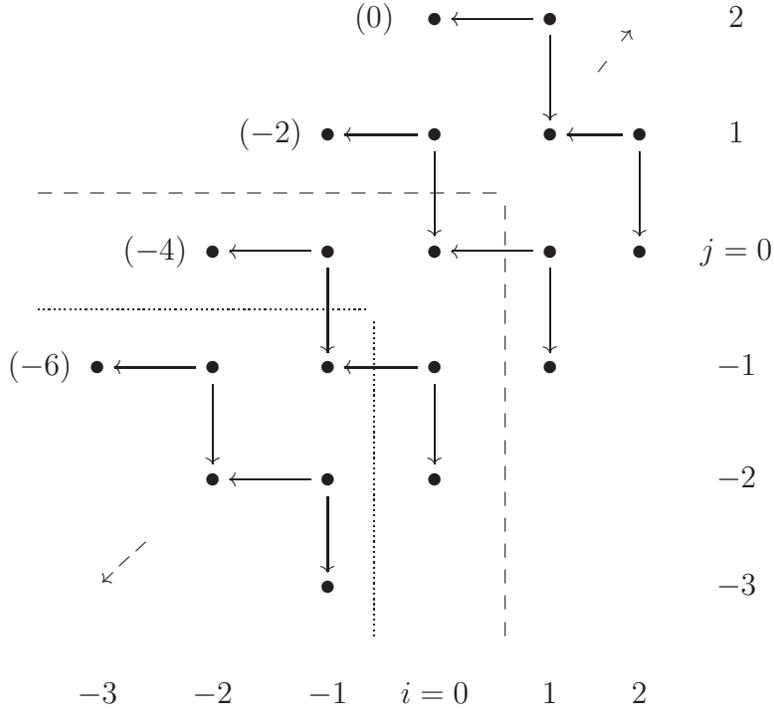

There is another concordance invariant which comes from the knot Floer complex, $\tau(K)$, which was introduced by Ozsv\'ath and Szab\'o in \cite{OSfourballgenus}, where they showed that it also gives a lower bound for the smooth four-dimensional genus of $K$, \[|\tau(K)| \leq g_4(K).\]
Given a knot Floer complex, this invariant is easily computed, and yet has been shown to be a quite powerful four-genus bound. For example, its value on torus knots, shown in \cite[Corollary 1.7]{OSfourballgenus} to be 
\begin{equation}
\tau(T(p,q)) = \frac{(p-1)(q-1)}{2},
\label{tautorus}
\end{equation}
was used to provide an alternate proof of the Milnor conjecture, which says that this is in fact the four-genus of $T(p,q)$. The invariant is defined from $CFK^{\infty}(K)$ by considering the subquotient complex \[ \widehat{CFK}(K):=C\{i=0\}. \] If we let $\iota_k$ be the inclusion map \[\iota_k: C\{i=0,j\leq k\} \to C\{i=0\},\] we get an induced map on homology
\[(\iota_k)_*: H_*(C\{i=0,j\leq k\}) \to H_*(C\{i=0\}).\] This map is clearly an isomorphism for large enough $k$, and the zero map for sufficiently negative values of $k$ (since the complex is finitely generated). We can then define \[\tau(K):= \min \{k | (\iota_k)_* \ \text{is non-trivial} \}. \] This quantity is additive under tensor products of complexes, and therefore $\tau$ defines a homomorphism from the smooth concordance group to $\Z$. 

It follows from Proposition \ref{lspacechar} that, for an $L$-space knot $K$, \[\tau(K) = A(x_k) = \max\{ j | \widehat{HFK}(K,j)\neq 0\},\] (which is also the Seifert genus of $K$). In general, it was shown in \cite{OSknot} that
\begin{equation}
\sum_j \chi \left(\widehat{HFK}(K,j)\right)\cdot T^j = \Delta_K(T),
\label{alexander}
\end{equation}
where $\Delta_K(T)$ is the symmetrized Alexander polynomial of $K$. Since, for an $L$-space knot, we can choose a basis for which the rank of $\widehat{CFK}(K,j)$ is either 0 or 1 for each $j$, the rank of each subcomplex is determined by its Euler characteristic, so, by Proposition \ref{lspacechar}, the knot Floer complex contains the same amount of information as the Alexander polynomial.

In particular, $\tau(K)=\deg \Delta_K(T)$. That, however, is all the information $\tau$ can give in this case. The statement that $\tau(K_1\#-K_2)=0$ for two $L$-space knots $K_1$ and $K_2$ is precisely the statement that their Alexander polynomials have equal degree. In contrast, the next theorem gives  a sense in which the invariant $d_1$ is more sensitive.

\begin{theorem}
\label{alexanderconcordance}
Suppose that $K_1$ and $K_2$ are two knots in $S^3$ which admit positive $L$-space surgeries. If $$d_1(K_1\#-K_2)=d_1(-K_1\#K_2)=0,$$ then $$\Delta_{K_1}(T)=\Delta_{K_2}(T).$$ In particular, the Alexander polynomial is a concordance invariant of $L$-space knots.
\end{theorem}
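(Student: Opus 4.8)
The plan is to make $d_1$ visible on the reduced complex, exploit $\underline{CFK}^-(K_i)\cong \F[U]_{(0)}$ for $L$-space knots, and read off a comparison of the two staircases that both hypotheses together upgrade to an equality. First I would record how $d_1$ sees the reduced complex. Because the reduction $CFK^-(K)\to\underline{CFK}^-(K)$ is a $(\Z,U)$-filtered chain homotopy equivalence, its maps and homotopies restrict to each Alexander sublevel $\{j\le s\}$ and induce $U$-compatible isomorphisms on their homology; since $d_1(K)$ is the top non-$U$-torsion Maslov grading of $H_*(C\{i\le 0,\ j\le 0\})=H_*(CFK^-(K)\{j\le 0\})$, it equals the top non-torsion grading of $H_*(\underline{CFK}^-(K)\{j\le 0\})$. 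For an $L$-space knot, where by Corollary \ref{lspacered} the generator $U^{n}v$ has grading $-2n$ and Alexander filtration $A^K(n)$, this gives $d_1(K)=-2\min\{n: A^K(n)\le 0\}$, and it lets me package the staircase of Proposition \ref{lspacechar} into the non-increasing sequence $\delta_K(n):=A^K(n)+n\ge 0$, running from $g(K)$ down to $0$, which carries the same data as the torsion coefficients of $\Delta_K$.

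Next I would compute $CFK^-(K_1\#-K_2)$ up to $(\Z,U)$-filtered chain homotopy. By Theorem \ref{tensor} together with $\underline{CFK}^-(K_1)\cong\F[U]_{(0)}$ from \eqref{lspacefu}, the complex $\underline{CFK}^-(K_1)\otimes_{\F[U]}CFK^-(-K_2)$ is a filtered deformation retract of $CFK^-(K_1\#-K_2)$ and is isomorphic to $CFK^-(-K_2)$ as an $\F[U]$-complex, but with the filtration rescaled: by the product filtration \eqref{tensorfiltration}, a homogeneous class at level $A_{-K_2}(U^m y)$ is boosted to $A_{-K_2}(U^m y)+\delta_{K_1}(m)$. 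In the $(i,j)$-coordinates of $CFK^\infty(-K_2)$ this reads $\tilde A=j+\delta_{K_1}(-i)$, so the new sublevel $\{\tilde A\le 0\}$ is exactly $\{\,i\le 0,\ j\le -\delta_{K_1}(-i)\,\}$. Thus $d_1(K_1\#-K_2)$ is the top non-torsion grading of the homology of $-K_2$'s complex truncated by a staircase profile prescribed by $K_1$.

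From here the argument is a comparison of staircases. As $d_1\le 0$ always, the hypothesis $d_1(K_1\#-K_2)=0$ says the grading-zero tower generator of $H_*(CFK^-(-K_2))$ already has a cycle representative inside the $\delta_{K_1}$-truncated region, which I expect to be equivalent to the pointwise domination $\delta_{K_1}(n)\le\delta_{K_2}(n)$ for all $n\ge 0$ (equivalently, the torsion coefficients of $K_1$ are dominated by those of $K_2$). Applying the identical computation to $-K_1\#K_2$ gives the reverse domination $\delta_{K_2}\le\delta_{K_1}$, and the two force $\delta_{K_1}=\delta_{K_2}$. Since $\delta_K(n)-\delta_K(n+1)$ recovers the torsion coefficients, and for an $L$-space knot these determine the symmetrized Alexander polynomial through \eqref{alexander}, I conclude $\Delta_{K_1}(T)=\Delta_{K_2}(T)$.

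The main obstacle is the middle inequality made precise: identifying the filtration jumps of $\underline{CFK}^-(K_1)$ with the torsion data, realizing $CFK^-(-K_2)$ as the dual staircase, and then computing the minimal filtration of a cycle representing the non-torsion class in the $\delta_{K_1}$-truncated complex. This is precisely the point where the interplay of the $i$- and $j$-filtrations, which the reduced complex otherwise lets me suppress, must be reintroduced. It is, however, a robust step: I only ever use the equality case of the two inequalities, so any monotone functional of the staircase whose coincidence pins down $\Delta_K$ will do, and the symmetrization survives whether the sharp condition is phrased via $\delta$, via the torsion coefficients, or via the $V_s$.
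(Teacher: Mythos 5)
Your proposal is correct, and its skeleton is the paper's: reduce $K_1$ to $\F[U]_{(0)}$ via Corollary \ref{lspacered}, tensor with the upside-down staircase of $-K_2$ via Theorem \ref{tensor}, and read $d_1$ from the Alexander level at which the Maslov-grading-zero tower class enters the subcomplex. Where you genuinely differ is the organization of the comparison. The paper runs an interleaved induction at the staircase corners: it checks one grading-zero generator $U^{\beta_1+\cdots+\beta_{k+1}}x_ny_{-m+2k+2}$ at a time, and must alternate between the two hypotheses at every stage, since it needs $\alpha_{k+1}=\beta_{k+1}$ (extracted from $d_1(K_1\#-K_2)=0$ together with its mirror) before the next Alexander grading can be written in closed form. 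You instead decouple the hypotheses, packaging each as a pointwise domination of gap functions, $d_1(K_1\#-K_2)=0\iff\delta_{K_1}\leq\delta_{K_2}$, which is cleaner and is essentially the Borodzik--Livingston gap-function viewpoint that the paper itself flags as an alternative. The step you identify as the main obstacle is true, and the ingredients that close it are exactly the paper's Remark \ref{upsidedown} together with the convention \eqref{maxalex}: the tensor product splits as a chain complex into summands $C_{2i}$; the grading-zero cycle generating the homology of $C_0$ is the sum of \emph{all} Maslov-grading-zero elements, and it is the unique such representative (grading-one chains live in the summand $C_2$, which has no grading-zero part, so there are no grading-zero boundaries), whence its filtration level is the maximum over those elements. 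These elements sit at the $U$-powers $B_k=\beta_1+\cdots+\beta_k$, $0\leq k\leq m$, where the filtration \eqref{tensorfiltration} evaluates to $\tilde A=\delta_{K_1}(B_k)-\delta_{K_2}(B_k)$ -- note this needs no knowledge that earlier $\alpha$'s equal $\beta$'s, which is what makes your decoupling work -- and since $\delta_{K_2}$ is constant on each interval $[B_k,B_{k+1})$ while $\delta_{K_1}$ is non-increasing, non-positivity at the corners upgrades to domination everywhere. Combined with $d_1\leq 0$ and the fact that the jump sizes of $\delta_K$ at their positions recover the staircase, hence $\Delta_K$ by Proposition \ref{lspacechar}, your argument is complete once that one lemma is written out; it is precisely where the paper's proof spends its effort.
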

\begin{proof}[\sc Proof]
As mentioned above, in light of Proposition \ref{lspacechar}, the Alexander polynomial of $K_i$ gives equivalent information to the knot Floer complex of $K_i$, which we will represent by its staircase shape. We can represent a staircase by listing the horizontal lengths in order from left to right. By the symmetry of the Alexander polynomial, this list is also the list of vertical lengths, in order from bottom to top. Suppose that $K_1$ has staircase $\{ \alpha_1, \alpha_2, \ldots , \alpha_n\}$, and $K_2$ has staircase $\{ \beta_1, \beta_2, \ldots , \beta_m\}$, as shown in Figure \ref{labeledstaircases}, and also that $$d_1(K_1\#-K_2)=d_1(-K_1\#K_2)=0.$$ The proof will proceed by showing first that the Alexander polynomials must have equal degrees, then, one step at a time, that $\alpha_i = \beta_i$ for all $i$ (and consequently, that $m=n$). 

\begin{figure}
$$\xymatrixcolsep{1.5 pc}\xymatrixrowsep{1.5 pc}
\xymatrix{
\bu & \bu \ar[l]^{\alpha_1} \ar[d]^{\alpha_n} & &  &  & \bu & \bu \ar[l]^{\beta_1} \ar[d]^{\beta_m} & & &  \\
& \bu & \bu \ar[l]^{\alpha_2} \ar@{..}[dr] &  &  &   & \bu & \bu \ar[l]^{\beta_2} \ar@{..}[dr]  &  &       \\
&  &  & \bu & \bu \ar[l]^{\alpha_n} \ar[d]^{\alpha_1} &    &  &  & \bu & \bu \ar[l]^{\beta_m} \ar[d]^{\beta_1}      \\
&  &  &  & \bu &    &  &  &  &   \bu    \\
& & K_1 & & & & & K_2 & & 
}$$
\caption{The staircases for $K_1$ and $K_2$.}
\label{labeledstaircases}
\end{figure}
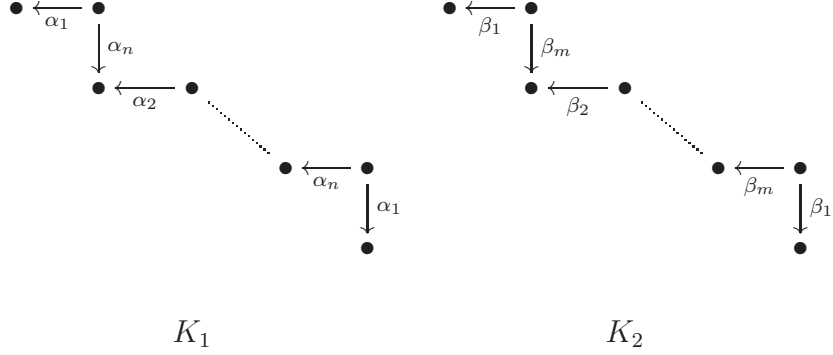

The complex $CFK^-(-K_2)$ will take the shape of an \textquotedblleft upside down staircase", since $-K_2$ is the mirror image of an $L$-space knot (see Figure \ref{mirrorstaircase}). In this case, $\tau(-K_2) = - \deg \Delta_{K_2}(T)$. We record here a particularly useful property of these complexes, which follows from direct inspection.

\begin{remark}
\label{upsidedown}
If $K$ is an $L$-space knot, then the knot Floer complex of its mirror, $CFK^-(-K)$, has a basis for which it satisfies the following:
\begin{itemize}
\item $CFK^-(-K)$ splits into a direct sum of complexes $C_{2i}$, for each integer $i$, where $C_{2i}$ consists of the homogeneous elements of Maslov gradings $2i$ and $2i-1$\\
\item for $i>0$, the complex $C_{2i}$ is acyclic\\
\item for $i\leq 0$, the complex $C_{2i}$ has homology isomorphic to $\F$, generated by the sum of all homogeneous elements of Maslov grading $2i$
\end{itemize}
\end{remark}

\begin{figure}
$$\xymatrixcolsep{1.5 pc}\xymatrixrowsep{1.5 pc}
\xymatrix{
 \bu & \bu \ar[l]^{\beta_1} \ar[d]^{\beta_m} & & &                        & \bu \ar[d]^{\beta_1} & & & &  \\
   & \bu & \bu \ar[l]^{\beta_2} \ar@{..}[dr]  &  &                         & \bu & \bu \ar[l]^{\beta_m} \ar@{..}[dr] & & &    \\
&  &  & \bu & \bu \ar[l]^{\beta_m} \ar[d]^{\beta_1}                   & & & \bu & \bu \ar[l]^{\beta_2} \ar[d]^{\beta_m} &    \\
   &  &  &  &   \bu                                                                            & & & & \bu & \bu \ar[l]^{\beta_1}    \\
 & & K_2 & &  & & & -K_2 & & 
}$$
\caption{The knot Floer complex of the $L$-space knot $K_2$, and that of its mirror image.}
\label{mirrorstaircase}
\end{figure}
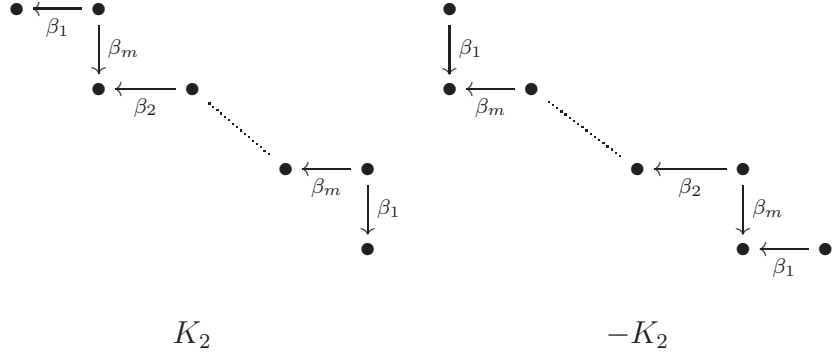

If $K_1$ is an $L$-space knot, then, by Corollary \ref{lspacered}, the complex $\underline{CFK}^-(K_1)$ is \textquotedblleft almost" the knot Floer complex of the unknot. More precisely, each is isomorphic to $\F[U]$, supported in grading zero, with the only difference being that, for the unknot, $U$ is homogeneous of degree one, while for $K_1$, $U$ is a non-homogeneous map which decreases the Alexander grading by {\it{at least}} one.  It follows that the tensor product complex $$\underline{CFK}^-(K_1)\otimes_{\F[U]} CFK^-(-K_2),$$ which we will denote by $C$, is \textquotedblleft almost" an upside down staircase; the only difference being that some of its \textquotedblleft stairs" have been bent. Figure \ref{t27-t34} shows an example of this, the complex $\underline{CFK}^-(T(2,7))\otimes_{\F[U]} CFK^-(-T(3,4))$ (recall that the reduced complex $\underline{CFK}^-(T(2,7))$ is shown in Figure \ref{t27}).

\begin{figure}
$$\xymatrixcolsep{0.6 pc} \xymatrixrowsep{0.6 pc}
\xymatrix{
& & & & & \bu \ar[d] & 3 \\
& & & & \bu \ar[d] & \bu & 2 \\
& & & \bu \ar[d] & \bu & & 1 \\
& & \bu \ar[d] & \bu & & \bu \ar[ll] \ar[dd] & 0 \\
& \bu \ar[d] & \bu & & \bu \ar[dd] \ar[ll] & & -1 \\
\bu \ar[d] & \bu & & \bu \ar[dd] \ar[ll] & & \bu & -2 \\
\bu & & \bu \ar[dd] \ar[ll] & & \bu & \bu \ar[l] & -3 \\
& \da[ld] & & \bu & \bu \ar[l] & & -4 \\
& & \bu & \bu \ar[l] & & & -5
 } \ \ \ \xymatrix{
& & & & & & & \bu \ar[d] & 6 \\
& & & & & & & \bu & 5 \\
& & & & & & \bu \ar[d] & & 4 \\
& & & & & & \bu & \bu \ar[lldd] \ar[dd] & 3 \\
& & & & & \bu \ar[d] & & & 2 \\
& & & & & \bu & \bu \ar[lldd] \ar[dd] & \bu & 1 \\
& & & & \bu \ar[d] & & & \bu \ar[ld] & 0 \\
& & & \bu \ar[d] & \bu & \bu \ar[lld] \ar[dd] & \bu & & -1 \\
& & \bu \ar[d]  & \bu & & & \bu \ar[ld] & & -2 \\
& \bu \ar[d] & \bu & & \bu \ar[ll] \ar[dd] & \bu & & & -3 \\
\bu \ar[d] & \bu & & \bu \ar[ll] \ar[dd] & & \bu \ar[ld] & & & -4 \\
\bu & & \bu \ar[ll] \ar[dd] & & \bu & & & & -5 \\
& \da[ld] & & \bu & \bu \ar[l] & & & & -6 \\
& & \bu & \bu \ar[l] & & & & & -7
}
$$
\caption{ On the left, the complex of the mirror image knot $CFK^-(-T(3,4))$. On the right, the complex $\underline{CFK}^-(T(2,7))\otimes_{\F[U]} CFK^-(-T(3,4)).$ Note that it retains the same \textquotedblleft staircase" form, but some horizontal arrows get bent downward. }
\label{t27-t34}
\end{figure}
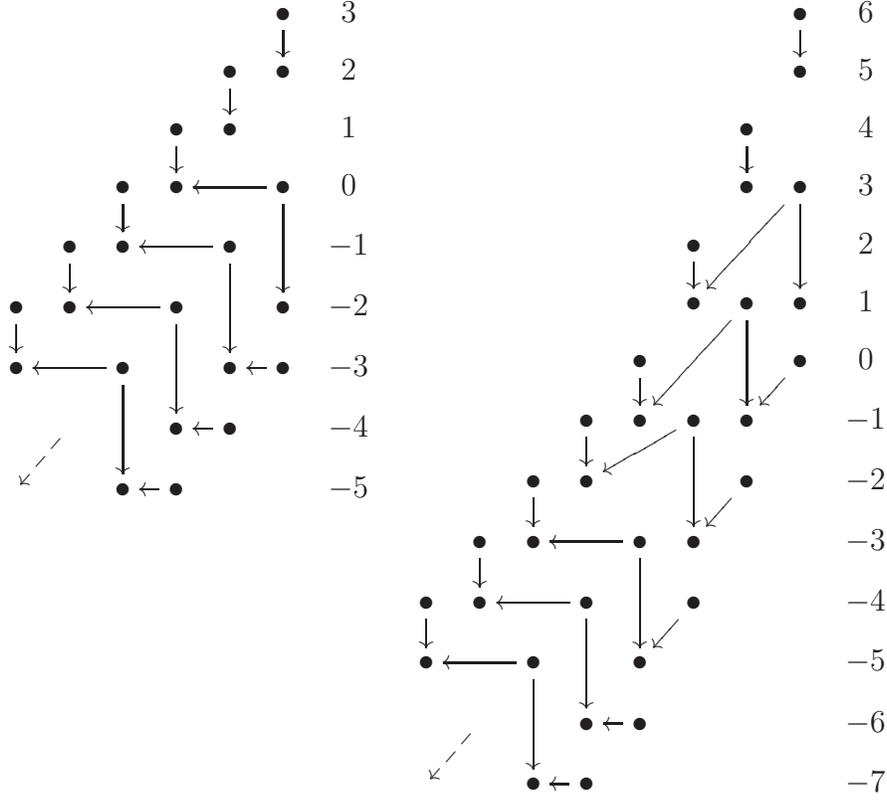

\begin{remark}
In particular, $C$ still has the properties in Remark \ref{upsidedown}, splitting into summands $C_{2i}$. Since the generator of homology of $C_0$ is the sum of {\it{all}} homogeneous elements with Maslov grading 0, its Alexander grading is the {\it{maximum}} of the Alexander gradings of all of these elements (see Equation \eqref{maxalex}). So, in this case, the $d_1$-invariant is zero if and only if {\it{all}} of the elements with Maslov grading zero have Alexander grading less than or equal to zero.
\end{remark}

We will now see how the Alexander grading on the $C_0$ summand is determined from the shape of the staircases; i.e., from the $\alpha_i$'s and $\beta_i$'s. Let us choose generators $\{ x_{-n}, \ldots, x_n\}$ for $CFK^-(K_1)$ as in Proposition \ref{lspacechar}. Then the generators for $\underline{CFK}^-(K_1)$ are $$x_{n-2i}, Ux_{n-2i}, \ldots ,U^{\alpha_{i+1} -1}x_{n-2i}$$ for every $0\leq i\leq n-1$, and $U^kx_{-n}$, for all $k\geq 0$. Further, as in the proof of Corollary \ref{lspacered}, we have that $$U( U^{\alpha_{i+1} -1}x_{n-2i}) = x_{n-2(i+1)} \ \ \ \textrm{for all} \ \ 0\leq i\leq n-1 $$ We should also point out that if $0\leq k <\alpha_{i+1}$, 

\begin{equation}
\label{filtlevel}
A(U^kx_{n-2i}) = \tau(K_1) - k  -\sum_{1\leq j\leq i}(\alpha_j  +\alpha_{n+1-j}),
\end{equation}
and \[M(U^kx_{n-2i}) = -2k-\sum_{1\leq j\leq i}2\alpha_j.\]

For $CFK^-(-K_2)$, we choose a basis $\{y_{-m},\ldots, y_m\}$, so that \[A(y_{-m+2k}) = \tau(-K_2) + \sum_{1\leq j\leq k}(\beta_{m+1-j}+\beta_j)\] and \[M(y_{-m+2k}) = \sum_{1\leq j\leq k}2\beta_j.\]

We now consider each generator in $C_0$ which has Maslov grading zero, and see what restrictions we get on the $\alpha_i$'s and $\beta_i$'s by assuming it has Alexander grading less than or equal to zero. The first generator we consider is $x_ny_{-m}$, and we have that
\begin{align*}
A(x_ny_{-m}) =&\  A(x_n)+A(y_{-m})\\
=& \ \tau(K_1)+\tau(-K_2)\\
=& \ \tau(K_1)-\tau(K_2).
\end{align*}
In order for this to be less than or equal to zero, we must have $\tau(K_1)\leq \tau(K_2)$. On the other hand, considering instead the knot $-K_1\# K_2$, the same argument says we must also have $\tau(K_2)\leq \tau(K_1)$, so $\tau(K_1)=\tau(K_2)$, and the Alexander polynomials of $K_1$ and $K_2$ must have equal degree.

The rest of the proof proceeds similarly. We next consider the generator $U^{\beta_1}x_ny_{-m+2}$. If $\alpha_1>\beta_1,$ then \[A(U^{\beta_1}x_ny_{-m+2}) = \tau(K_1)-\beta_1 +\tau(-K_2)+\beta_1+\beta_m>0,\] so, in order to have $d_1=0$, we must have $\alpha_1\leq \beta_1$. Again, considering $-K_1\# K_2$, we must also have $\beta_1\leq \alpha_1$, so $\alpha_1=\beta_1$.
Since $\alpha_1=\beta_1$, $U^{\beta_1}x_n = x_{n-2}$, so 
\begin{align*}
A(U^{\beta_1}x_ny_{-m+2}) =&\ A(x_{n-2}y_{-m+2})\\
=&\ \tau(K_1) - \alpha_1 - \alpha_n + \tau(-K_2)+\beta_1 +\beta_m\\
=&\ -\alpha_n +\beta_m.
\end{align*}
This means we must also have $\alpha_n\geq \beta_m$; and once again considering $-K_1\# K_2$, we see that in fact $\alpha_n=\beta_m$.

We have to this point shown that the first elements of the lists representing these two staircases agree, and also that the last elements agree. Taking this as our base case, we will now work our way inductively toward the middle.

To that end, assume that $\alpha_i=\beta_i$ and $\alpha_{n+1-i}=\beta_{m+1-i}$, for all $1\leq i\leq k$. 
Then consider the generator $$U^{\beta_1+\beta_2+\cdots +\beta_{k+1}}x_ny_{-m+2k+2} =  U^{\beta_{k+1}}x_{n-2k}y_{-m+2k+2}.$$
If $\alpha_{k+1}>\beta_{k+1}$, then 
\begin{align*}
A(U^{\beta_{k+1}}x_{n-2k}y_{-m+2k+2})=&\ A(U^{\beta_{k+1}}x_{n-2k})+A(y_{-m+2k+2})\\
=&\ \tau(K_1)-\beta_{k+1}-\sum_{1\leq j\leq k}(\alpha_j+\alpha_{n+1-j})\\
&\ -\tau(K_2) + \sum_{1\leq j\leq k+1} (\beta_j +\beta_{m+1-j})\\
=&\ \beta_{m-k}\\
>&\ 0,
\end{align*}
so it must be that $\alpha_{k+1}\leq \beta_{k+1}$; considering $-K_1\# K_2$ gives $\alpha_{k+1}=\beta_{k+1}$. 

Since $\alpha_{k+1}=\beta_{k+1}$, $U^{\beta_{k+1}}x_{n-2k}=x_{n-2k-2}$, so 

\begin{align*}
A(U^{\beta_{k+1}}x_{n-2k}y_{-m+2k+2}) =&\  A(x_{n-2k-2}y_{-m+2k+2})\\
=&\ \tau(K_1)-\sum_{1\leq j\leq k+1}(\alpha_j+\alpha_{n+1-j})\\
&\ -\tau(K_2) +  \sum_{1\leq j\leq k+1} (\beta_j +\beta_{m+1-j})\\
=&\ -\alpha_{n-k}+\beta_{m-k}
\end{align*}

This means $\alpha_{n-k}\geq \beta_{m-k}$, and as before, we see that $\alpha_{n-k}=\beta_{m-k}$, which completes the inductive step. {\it{A priori}}, $n$ may not be equal to $m$, but this induction can be continued for all $i$ until either $\alpha_i$ or $\beta_i$ does not exist. That is, until we exceed the minimum of $n$ and $m$. Assume, without loss of generality, that it is $n$. Upon reaching that point, we have $\alpha_i=\beta_i$, for $1\leq i\leq n$. But since the Alexander polynomials have equal degree, $$\sum_{1\leq i\leq n}\alpha_i = \sum_{1\leq i \leq m} \beta_i,$$ so $n$ and $m$ must be equal.
\end{proof}

\begin{figure}
$$\xymatrixcolsep{0.6 pc}\xymatrixrowsep{0.2 pc}
\xymatrix{
& & & & & & & \bu \ar[dd] & 6 \\
& & & & & & & & \\
& & & & & & & \bu & 5 \\
& & & & & & & & \\
& & & & & & \bu \ar[dd] & & 4 \\
& & & & & & & & \\
& & & & & & \bu & \bu \ar[lldddd] \ar[dddd] & 3 \\
& & & & & & & & \\
& & & & & \bu \ar[dd] & & & 2 \\
& & & & & & & & \\
& & & & & \bu & \tc{red}{\bullet} \ar@[red][lldddd] \ar@[red][dddd] & \bu & 1 \\
\ar@{--}[rrrrrrrr] & & & & & & & & \\
& & & & \tc{red}{\bu} \ar@[red][dd] & & & \tc{red}{\bu} \ar@[red][ldd]  & 0 \\
& & & & & & & & \\
& & & \bu \ar[dd] & \tc{red}{\bu} & \bu \ar[lldd] \ar[dddd] & \tc{red}{\bu}  & & -1 \\
& & & & & & & & \\
& & \bu \ar[dd]  & \bu & & & \bu \ar[ldd] & & -2 \\
& & & & & & & & \\
& \bu \ar[dd] & \bu & & \bu \ar[ll] \ar[dddd] & \bu & & & -3 \\
& & & & & & & & \\
\bu \ar[dd] & \bu & & \bu \ar[ll] \ar[dddd] & & \bu \ar[ldd] & & & -4 \\
& & & & & & & & \\
\bu & & \bu \ar[ll] \ar[dddd] & & \bu & & & & -5 \\
& & & & & & & & \\
& \da[ldd] & & \bu & \bu \ar[l] & & & & -6 \\
& & & & & & & & \\
& & \bu & \bu \ar[l] & & & & & -7
}$$
\caption{ The complex $\underline{CFK}^-(T(2,7))\otimes_{\F[U]} CFK^-(-T(3,4))$. Multiplication by $U$ takes staircase to staircase, but we suppress the dotted arrows to avoid obscuring the picture. Although $\tau\big(T(2,7)\#-T(3,4)\big)=0$, the summand $C_0$ (shaded red) has its generator of homology with Alexander grading 1 (above the dashed line), so $d_1\big(T(2,7)\#-T(3,4)\big)\neq 0$ (in fact, $d_1=-2$). }
\label{d>0}
\end{figure}
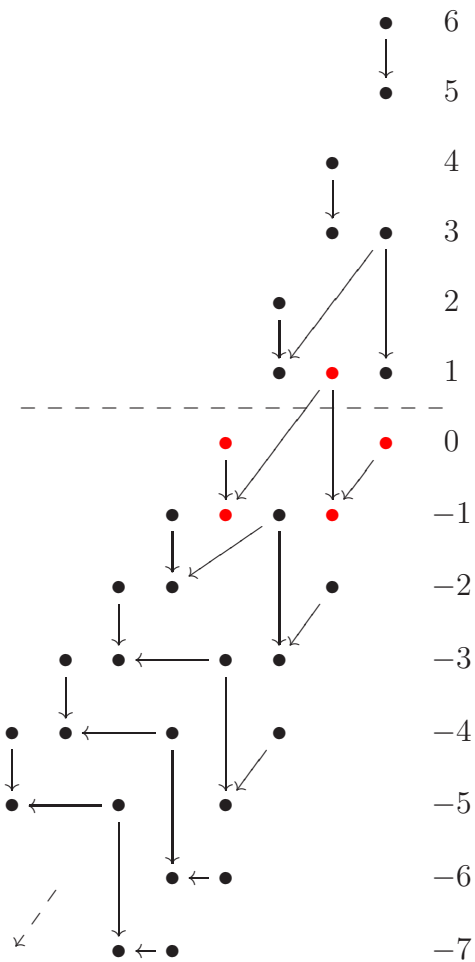

Figure \ref{t27-t34} shows the example of the sum $T(2,7)\# -T(3,4)$. In Figure \ref{d>0}, it is exhibited that \[d_1(T(2,7)\# -T(3,4)) =-2,\] although \[\tau(T(2,7)\# -T(3,4)) =0.\] This is an instance of a general fact which follows from Theorem \ref{alexanderconcordance}.

\begin{corollary}
If $K_1$ and $K_2$ are two $L$-space knots whose Alexander polynomials are distinct but have the same degree, then $$\tau(K_1\# -K_2) = \tau (-K_1\# K_2) =0,$$ but either $$d_1(K_1\# -K_2)\neq 0 \ \ \ \textrm{ or } \ \ \ d_1(-K_1\# K_2)\neq 0.$$ In particular, $d_1$ gives a stronger four-genus bound than $\tau$ for $K_1\# -K_2$ and its mirror.
\label{lspacecor}
\end{corollary}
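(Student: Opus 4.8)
The plan is to derive the corollary almost entirely from Theorem~\ref{alexanderconcordance}, together with the formulas for $\tau$ and $d_1$ recorded above. I would first dispose of the statement about $\tau$. Since $\tau$ is additive under connected sum and satisfies $\tau(-K)=-\tau(K)$, and since for an $L$-space knot $\tau(K)=\deg\Delta_K(T)$, we have
\[
\tau(K_1\#-K_2)=\tau(K_1)-\tau(K_2)=\deg\Delta_{K_1}(T)-\deg\Delta_{K_2}(T)=0,
\]
because the two Alexander polynomials are assumed to have the same degree; the identical computation gives $\tau(-K_1\#K_2)=0$.

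For the statement about $d_1$, I would simply invoke the contrapositive of Theorem~\ref{alexanderconcordance}. An $L$-space knot admits a positive $L$-space surgery by definition, so the hypotheses of that theorem are satisfied. The theorem asserts that if both $d_1(K_1\#-K_2)$ and $d_1(-K_1\#K_2)$ vanish, then $\Delta_{K_1}(T)=\Delta_{K_2}(T)$. Since we are assuming the two Alexander polynomials are distinct, the two $d_1$-invariants cannot both be zero; hence at least one of them is nonzero, which is precisely the claim.

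Finally, to see that $d_1$ gives the stronger four-genus bound, I would use that $K_1\#-K_2$ and its mirror $-K_1\#K_2$ share the same smooth four-genus. Because $\tau$ vanishes on both, the bound $|\tau|\leq g_4$ is vacuous for the pair. On the other hand, Proposition~\ref{d1prop} always gives $d_1\leq 0$ and $-d_1\leq 2g_4$, so whichever of the two knots has nonzero $d_1$ in fact has $d_1<0$, and therefore satisfies the strictly positive lower bound $g_4\geq -d_1/2>0$. Thus $d_1$ detects positive four-genus where $\tau$ detects nothing.

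I do not expect a serious obstacle here: the real content is already contained in Theorem~\ref{alexanderconcordance}, and the remaining steps are formal consequences of the additivity of $\tau$, the behavior of both invariants under mirroring, and the sign constraints in Proposition~\ref{d1prop}. The one point that warrants a moment's care is confirming that the hypothesis ``admits a positive $L$-space surgery'' in Theorem~\ref{alexanderconcordance} is implied by being an $L$-space knot in the sense defined in this section, which it is.
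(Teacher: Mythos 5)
Your proposal is correct and matches the paper's (largely implicit) argument: the $\tau$ statement follows from additivity of $\tau$ and the identity $\tau(K)=\deg\Delta_K(T)$ for $L$-space knots, and the $d_1$ statement is exactly the contrapositive of Theorem \ref{alexanderconcordance}. Your added care on the sign constraint $0\leq -d_1\leq 2g_4$ from Proposition \ref{d1prop} and on the mirror-invariance of $g_4$ correctly justifies the final ``stronger bound'' sentence, which the paper leaves unstated.
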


\noindent {\bf{Example 1.}} The example illustrated in Figure \ref{d>0} can be generalized to the knots \[K_p:= T\left( 2,p(p-1)+1\right)\# -T(p,p+1).\] By examining the Alexander polynomials of torus knots, it can be seen that the lengths of the staircase for $T(p,p+1)$ are $\{ 1,2,\cdots, p-1 \}$, whereas the staircase for $T(2,q)$ has $\frac{q-1}{2}$ steps, all of length 1.

From this it can be seen (see Figure \ref{t231-t67} for an example) that the generator of homology of the complex $C_0$ has Alexander grading
\begin{align}
\begin{split}
A =&\ \max_k \sum_{i=1}^k (p-i)-i \\
=&\ \sum_{i=1}^{\lfloor \frac{p}{2}\rfloor } p-2i\\
=&\ \left\lfloor \frac{p}{2}\right\rfloor \left( p-\left\lfloor \frac{p}{2}\right\rfloor-1\right).
\end{split}
\label{A0}
\end{align}
In general, showing that the Alexander grading of this generator is positive only shows that $d_1\leq 0$, but in this case we can get an explicit value with relative ease. Roughly speaking, this is because the map $U$ on $\underline{CFK}^-(T(2,p(p-1)+1))$ decreases the Alexander grading by 2 (at least on elements with high enough Maslov grading), and of course also decreases the Maslov grading by 2. So, in fact, the grading in \eqref{A0} is exactly $-d_1$. That is, for any $p>1$,
\begin{equation}
\tau(K_p)=0, \ \ \ \ \  \text{but} \ \ \ \ \ 
d_1(K_p) =
\begin{cases}
-\frac{p^2-2p}{4} &  p\  \text{ even},\\
-\left( \frac{p-1}{2}\right)^2 & p \equiv 1 \mod 4,\\
-\left( \frac{p-1}{2}\right)^2 -1 & p \equiv 3 \mod 4.
\end{cases}
\end{equation}
It should be pointed out that while $d_1$ has more to say than $\tau$ for these knots, the knot signature $\sigma$ gives an even better topological four-genus bound (at least for $p>5$).

\begin{figure}
$$\xymatrixcolsep{0.3 pc} \xymatrixrowsep{0.3 pc}
\xymatrix{
 & & &  & & & & & & & & & & \bu \ar[ddd] \ar[ddddllll] & & & \bu \ar[dddd] \ar[dddlll] & & & & 6 \\
 & & &  & & & & & & & & & & & & & & & & &  \\
 & & &  & & & & & & \bu \ar[dd] \ar[dddddlllll] & & & & & & & & & \bu \ar[ddddd] \ar[ddll] & & 4 \\
 & & &  & & & & & & & & & & \bu & & & & & & &  \\
 & & &  & & & & & & \bu & & & & & & & \bu & & & & 2 \\
 \ar@{--}[rrrrrrrrrrrrrrrrrrrr] & & & &  & & & & & & & & & & & & & & & &  \\
 & & &  & \bu \ar[d] & & & & & & \bu \ar[ddd] \ar[dldldldl] & &  & \bu \ar[dddd] \ar[dldldl] &  & & & & & \bu \ar[dl] & 0 \\
 & & &  & \bu & & & & & & & & & & & & & & \bu & &  \\
 & & & & & & \bu \ar[dd] \ar[ddlllll] & & & & & & & & & \bu \ar[ddddd] \ar[ddll] & & & & &  -2 \\
 & \bu \ar[d] & & & & & & & & & \bu & & & & & & &  & & &  \\
 & \bu & & & & & \bu & & & & & & & \bu & & & &  & & & -4 \\
 & & & & & & & & & & & & & & & & &  & & &  \\
 & & & & & & & & & & & & & & & & \bu \ar[dl] & & & &  -6 \\
 & & & & & & & & & & & & & & & \bu & &  & & &  \\
 }
$$
\caption{A portion of the complex $\underline{CFK}^-(T(2,31))\otimes_{\F[U]}CFK^-(-T(6,7))$. The upper summand shown is $C_0$, and its generator of homology has Alexander grading 6. The generators of homology for $C_{-2}$ and $C_{-4}$ (not shown) are also above the dashed line. The generator of homology of the summand $C_{-6}$, the lower summand shown, has the maximal Maslov grading of any generator below the dashed line, so $d_1(K_6)=-6$. }
\label{t231-t67}
\end{figure}
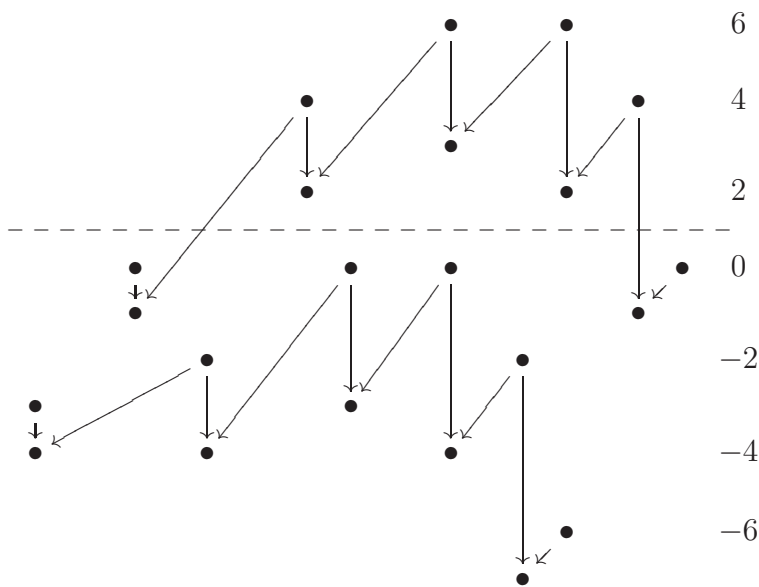

\noindent {\bf{Example 2.}} Even among sums of torus knots, however, we can find examples for which $\tau$, $\sigma$ and Rasmussen's $s$ invariant defined using Khovanov homology \cite{RasmussenSlice} are all equal to zero, and $|d_1|$ is arbitrarily large (for all knots discussed in this paper, $s=2\tau$). Define \[J_p^+:=T(2,8p+1)\# T(4p,4p+1),\] and then let \[J_p:= J_p^+ \# -T(4p+1,4p+2).\] A direct computation using \eqref{tautorus} and, for example, \cite[Theorem 5.2]{GLM} shows that \[ \tau(J_p)=\sigma (J_p) =0 \ \ \ \text{for all } p>0.\]

The staircases of the individual torus knot summands here have the type described in Example 1. The sum of two $L$-space knots, as we have seen, is not an $L$-space knot. However, its reduced complex has an acyclic subcomplex which is $U$-torsion, and the corresponding quotient complex is isomorphic to $\F[U]$. Since $U$-torsion elements are not relevant to the computation of $d$-invariants, this means we can treat the sum of staircases as a staircase, if it is only the $d$-invariants we are interested in (see \cite[Section~5]{BL} and \cite[Section~2.4]{BLSemi}, where Borodzik and Livingston discuss the gap functions of connected sums of algebraic knots, for an alternate point of view). That is to say, this quotient complex is filtered isomorphic to the reduced complex corresponding to some staircase, which we may call the \textquotedblleft representative staircase".

 If one of the summands is $T(2,n)$, the resulting representative staircase can be obtained relatively simply. In the case at hand, the representative staircase for $J_p^+$ is given by \[ \{\overbrace{1,\cdots,1}^{2p^2+5p},3,\overbrace{1,\cdots,1}^{2p-2},5,\overbrace{1,\cdots,1}^{2p-3},\cdots, 4p-5,1,1,4p-3,1,4p-1 \}.   \]
An example of this staircase is shown in Figure \ref{j2+}, for the case $p=2$. Recall that, by the symmetry of the Alexander polynomials of the summands of $J_p^+$, this is also the list of vertical lengths, from bottom to top. With this, and also knowing the upside-down staircase shape of $CFK^-(T(4p+1,4p+2))$, we can compute the Alexander gradings as we did above. This allows us to see that, for all $p>0$,
\begin{equation}
d_1(J_p) = -2p. 
\end{equation}
We suppress the explicit computations here, but instead show the $p=2$ case in Figure \ref{j2}.

\begin{figure}
$$\xymatrixcolsep{0.015 pc} \xymatrixrowsep{0.015 pc}
\xymatrix@=0pt{
&&&&&  &&&&&  &&&&&  &&&&&  &&&&& &&& \\
&&&&&  &&&&&  &&&&&  &&&&&  &&&&& &&& \\
&&  &&&  &&&&&  &&&&&  &&&&&  &&&&& &&& \\
&&&&&  &&&&&  &&&&&  &&&&&  &&&&& &&&  \\
&&  &&&  &&&&&  &&&&&  &&&&&  &&&&& &&& \\
&&&&&  &&&&&  &&&&&  &&&&&  &&&&& &&&  \\
&&&&&  &&&&&  &&&&&  &&&&&  &&&&& &&& \\
&&&&&  &&&&&  &&&&&  &&&&&  &&&&& &&& \\
&&&&&  &&&&&  &&&&&  &&&&&  &&&&& &&& \\
 \lar'[r]+0 '[rddddddd]+0  '[rrrddddddd]+0   '[rrrddddddddddddd]+0   '[rrrrrrddddddddddddd]+0  '[rrrrrrdddddddddddddddddd]+0  '[rrrrrrrrrrdddddddddddddddddd]+0  '[rrrrrrrrrrdddddddddddddddddddddd]+0  '[rrrrrrrrrrrrrrrdddddddddddddddddddddd]+0   '[rrrrrrrrrrrrrrrddddddddddddddddddddddddd]+0   '[rrrrrrrrrrrrrrrrrrrrrddddddddddddddddddddddddd]+0     '[rrrrrrrrrrrrrrrrrrrrrddddddddddddddddddddddddddd]+0  '[rrrrrrrrrrrrrrrrrrrrrrrrrrrrddddddddddddddddddddddddddd]+0                                                                        '[rrrrrrrrrrrrrrrrrrrrrrrrrrrrdddddddddddddddddddddddddddd]+0                                                                      &&&&&  &&&&&  &&&&&  &&&&&  &&&&& &&& \\
&&&&&  &&&&&  &&&&&  &&&&&  &&&&& &&& \\
&&  &&&  &&&&&  &&&&&  &&&&&  &&&&& &&& \\
&&&&&  &&&&&  &&&&&  &&&&&  &&&&& &&&  \\
&&&&&  &&&&&  &&&&&  &&&&&  &&&&& &&& \\
&&&&&  &&&&&  &&&&&  &&&&&  &&&&& &&& \\
&&&&&  &&&&&  &&&&&  &&&&&  &&&&& &&& \\
&&&  &&  &&&&&  &&&&&  &&&&&  &&&&& &&& \\
&&&&&  &&&&&  &&&&&  &&&&&  &&&&& &&&  \\
&&&&&  &&&&&  &&&&&  &&&&&  &&&&& &&& \\
&&&&&  &&&&&  &&&&&  &&&&&  &&&&& &&& \\
&&&&&  &&&&&  &&&&&  &&&&&  &&&&& &&& \\
&&&&&  &&&&&  &&&&&  &&&&&  &&&&& &&& \\
&&&&&  & &&&&  &&&&&  &&&&&  &&&&& &&&  \\
&&&&&  &&&&&  &&&&&  &&&&&  &&&&& &&& \\
&&&&&  &&&&&  &&&&&  &&&&&  &&&&& &&& \\
&&&&&  &&&&&  &&&&&  &&&&&  &&&&& &&& \\
&&&&&  &&&&&  &&&&&  &&&&&  &&&&& &&& \\
&&&&&  &&&&&  &&&&&  &&&&&  &&&&& &&&  \\
&&&&&  &&&&&  &&&&&  &&&&&  &&&&& &&& \\
&&&&&  &&&&&  &&&&&  &&&&&  &&&&& &&& \\
&&&&&  &&&&&  &&&&&  &&&&&  &&&&& &&& \\
&&&&&  &&&&&  &&&&&  &&&&&  &&&&& &&& \\
&&&&&  &&&&&  &&&&&  &&&&&  &&&&& &&&  \\
&&&&&  &&&&&  &&&&&  &&&&&  &&&&& &&& \\
&&&&&  &&&&&  &&&&&  &&&&&  &  &&&& &&& \\
&&&&&  &&&&&  &&&&&  &&&&&  &&&&& &&& \\
&&&&&  &&&&&  &&&&&  &&&&&  &&&&& &&&  \\ 
&&&&&  &&&&&  &&&&&  &&&&&  &&&&& &&& 
}  \!\!\!\!\!\!\!\!\!\!\!\!\!\!\!\!\!\!\!\!\!\!\!\!\!\!\!\!\!\!\!\!\!\!\!\!\!\!\!\!\!\!\!\!\!\!\!\!\!\!\!\!\!\!\!
\xymatrix{
\lar'[0,1]+0 '[7,1]+0 '[7,2]+0 '[8,2]+0 '[8,3]+0 '[13,3]+0 '[13,4]+0 '[14,4]+0 '[14,5]+0 '[15,5]+0 '[15,6]+0  '[18,6]+0  '[18,7]+0  '[19,7]+0 '[19,8]+0 '[20,8]+0 '[20,9]+0 '[21,9]+0 '[21,10]+0 '[22,10]+0 '[22,11]+0  '[23,11]+0  '[23,12]+0  '[24,12]+0  '[24,13]+0 '[25,13]+0  '[25,14]+0  '[26,14]+0  '[26,15]+0  '[27,15]+0  '[27,16]+0 '[28,16]+0  '[28,17]+0  '[29,17]+0  '[29,18]+0 '[30,18]+0  '[30,21]+0  '[31,21]+0  '[31,22]+0  '[32,22]+0  '[32,23]+0  '[33,23]+0  '[33,28]+0  '[34,28]+0  '[34,29]+0  '[35,29]+0  '[35,36]+0  '[36,36]+0
&&&&&& &&&&&& &&&&&& &&&&&& &&&&&& &&&&&&  \\
&&&&&& &&&&&& &&&&&& &&&&&& &&&&&& &&&&&&  \\
&&&&&& &&&&&& &&&&&& &&&&&& &&&&&& &&&&&&  \\
&&&&&& &&&&&& &&&&&& &&&&&& &&&&&& &&&&&&  \\
&&&&&& &&&&&& &&&&&& &&&&&& &&&&&& &&&&&&  \\
&&&&&& &&&&&& &&&&&& &&&&&& &&&&&& &&&&&&  \\
&&&&&& &&&&&& &&&&&& &&&&&& &&&&&& &&&&&&  \\
&&&&&& &&&&&& &&&&&& &&&&&& &&&&&& &&&&&&  \\
&&&&&& &&&&&& &&&&&& &&&&&& &&&&&& &&&&&&  \\
&&&&&& &&&&&& &&&&&& &&&&&& &&&&&& &&&&&&  \\
&&&&&& &&&&&& &&&&&& &&&&&& &&&&&& &&&&&&  \\
&&&&&& &&&&&& &&&&&& &&&&&& &&&&&& &&&&&&  \\
&&&&&& &&&&&& &&&&&& &&&&&& &&&&&& &&&&&&  \\
&&&&&& &&&&&& &&&&&& &&&&&& &&&&&& &&&&&&  \\
&&&&&& &&&&&& &&&&&& &&&&&& &&&&&& &&&&&&  \\
&&&&&& &&&&&& &&&&&& &&&&&& &&&&&& &&&&&&  \\
&&&&&& &&&&&& &&&&&& &&&&&& &&&&&& &&&&&&  \\
&&&&&& &&&&&& &&&&&& &&&&&& &&&&&& &&&&&&  \\
&&&&&& &&&&&& &&&&&& &&&&&& &&&&&& &&&&&&  \\
&&&&&& &&&&&& &&&&&& &&&&&& &&&&&& &&&&&&  \\
&&&&&& &&&&&& &&&&&& &&&&&& &&&&&& &&&&&&  \\
&&&&&& &&&&&& &&&&&& &&&&&& &&&&&& &&&&&&  \\
&&&&&& &&&&&& &&&&&& &&&&&& &&&&&& &&&&&&  \\
&&&&&& &&&&&& &&&&&& &&&&&& &&&&&& &&&&&&  \\
&&&&&& &&&&&& &&&&&& &&&&&& &&&&&& &&&&&&  \\
&&&&&& &&&&&& &&&&&& &&&&&& &&&&&& &&&&&&  \\
&&&&&& &&&&&& &&&&&& &&&&&& &&&&&& &&&&&&  \\
&&&&&& &&&&&& &&&&&& &&&&&& &&&&&& &&&&&&  \\
&&&&&& &&&&&& &&&&&& &&&&&& &&&&&& &&&&&&  \\
&&&&&& &&&&&& &&&&&& &&&&&& &&&&&& &&&&&&  \\
&&&&&& &&&&&& &&&&&& &&&&&& &&&&&& &&&&&&  \\
&&&&&& &&&&&& &&&&&& &&&&&& &&&&&& &&&&&&  \\
&&&&&& &&&&&& &&&&&& &&&&&& &&&&&& &&&&&&  \\
&&&&&& &&&&&& &&&&&& &&&&&& &&&&&& &&&&&&  \\
&&&&&& &&&&&& &&&&&& &&&&&& &&&&&& &&&&&&  \\
&&&&&& &&&&&& &&&&&& &&&&&& &&&&&& &&&&&&  \\
&&&&&& &&&&&& &&&&&& &&&&&& &&&&&& &&&&&&
}
$$
\caption{On the left is the staircase for the torus knot $T(8,9)$ (the horizontal lengths range from 1 to 7, in order). On the right is the representative staircase corresponding to the knot $T(2,17)\# T(8,9)$, which we have called $J_2^+$. This representative staircase contains all of the generators which are relevant for computing $d$-invariants.}
\label{j2+}
\end{figure}
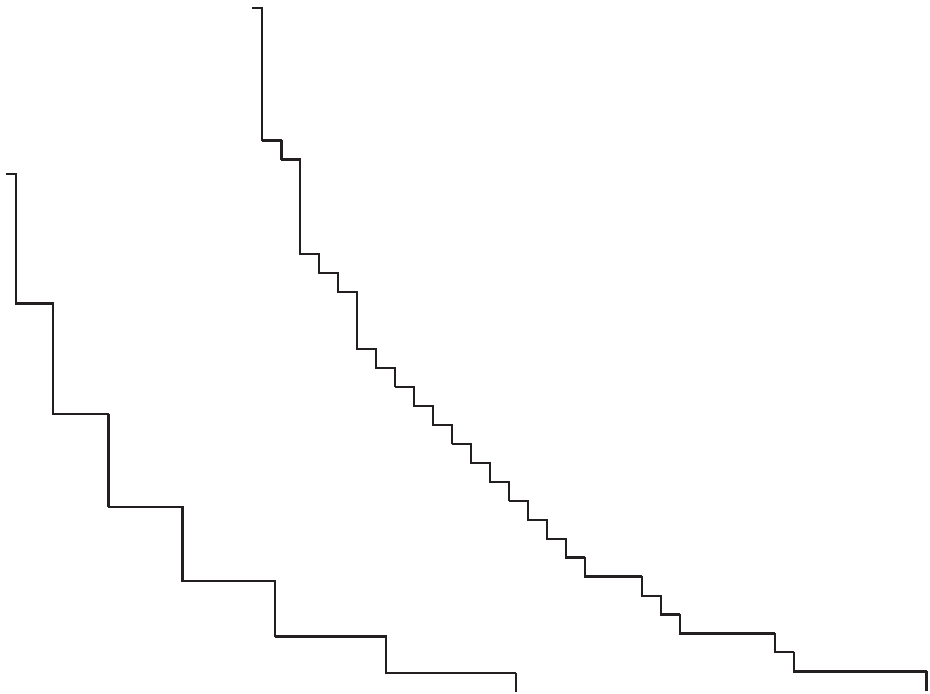

\begin{figure}
$$\xymatrixcolsep{0.02 pc}  \xymatrixrowsep{0.3 pc} \xymatrix{
&&&&&& &&&&&& &&&&&& &&&&&& &&&&&& &&&&&& &&& 4 \\
&&&&&& &&&&&& &&&&&& &&&&&& &&&&&& &&&&&& &&& 3 \\
&&&&&& &&&&&& &&&&&& &&&&&& &&&&&& &&&&&& &&& 2 \\
&&&&&& &&&&&& &&&&&& &&&&&& &&&&&& &&&&&& &&& 1 \\
\ar@{--}[0,39] &&&&&& &&&&&& &&&&&& &&&&&& &&&&&& &&&&&& &&& \\
&& \lar'[1,0]+0 '[-2,8]+0 '[1,8]+0 '[-3,15]+0 '[1,15]+0 '[-4,21]+0 '[1,21]+0 '[-5,26]+0*{\times} '[1,26]+0 '[-4,30]+0 '[3,30]+0 '[-3,33]+0 '[5,33]+0 '[-2,35]+0 '[7,35]+0 '[0,36]+0     &&&& &&&&&& &&&&&& &&&&&& &&&&&& &&&&&& &&& 0 \\
&   \lar'[1,0]+0 '[0,8]+0 '[2,8]+0 '[-1,15]+0 '[2,15]+0 '[-3,21]+0 '[2,21]+0 '[-4,26]+0 '[2,26]+0 '[-3,30]+0 '[4,30]+0 '[-1,33]+0 '[6,33]+0 '[0,35]+0 '[8,35]+0 '[7,36]+0  &&&&& &&&&&& &&&&&& &&&&&& &&&&&& &&&&&& &&& -1 \\
\lar'[1,0]+0 '[0,8]+0 '[2,8]+0 '[0,15]+0 '[3,15]+0 '[-1,21]+0 '[3,21]+0 '[-2,26]+0 '[3,26]+0 '[-1,30]+0 '[5,30]+0 '[0,33]+0 '[7,33]+0 '[5,35]+0 '[13,35]+0 '[8,36]+0   &&&&&& &&&&&& &&&&&& &&&&&& &&&&&& &&&&&& &&& -2 \\
&&&&&& &&&&&& &&&&&& &&&&&& &&&&&& &&&&&& &&& \\
&&&&&& &&&&&& &&&&&& &&&&&& &&&&&& &&&&&& &&& \\
&&&&&& &&&&&& &&&&&& &&&&&& &&&&&& &&&&&& &&& \\
&&&&&& &&&&&& &&&&&& &&&&&& &&&&&& &&&&&& &&& \\
&&&&&& &&&&&& &&&&&& &&&&&& &&&&&& &&&&&& &&& \\
&&&&&& &&&&&& &&&&&& &&&&&& &&&&&& &&&&&& &&& \\
&&&&&& &&&&&& &&&&&& &&&&&& &&&&&& &&&&&& &&& \\
&&&&&& &&&&&& &&&&&& &&&&&& &&&&&& &&&&&& &&& \\
&&&&&& &&&&&& &&&&&& &&&&&& &&&&&& &&&&&& &&& \\
&&&&&& &&&&&& &&&&&& &&&&&& &&&&&& &&&&&& &&& \\
&&&&&& &&&&&& &&&&&& &&&&&& &&&&&& &&&&&& &&& \\
&&&&&& &&&&&& &&&&&& &&&&&& &&&&&& &&&&&& &&& \\
&&&&&& &&&&&& &&&&&& &&&&&& &&&&&& &&&&&& &&&
}
$$
\caption{The portion of the complex $\underline{CFK}^-(J_p)$ which is relevant for computing $d_1(J_p)$, in the case where $p=2$. The uppermost summand is $C_0$. From right to left, its generators start with an Alexander grading of 0, and increase by 1 until reaching an Alexander grading of $2p$ (the generator marked with an $\times$). Multiplying by $U$ takes staircase to staircase, and notice that the Alexander grading of the $\times$ generator decreases by 2 each time. This is the basic idea behind showing that $d_1(J_p)=-2p$.}
\label{j2}
\end{figure}
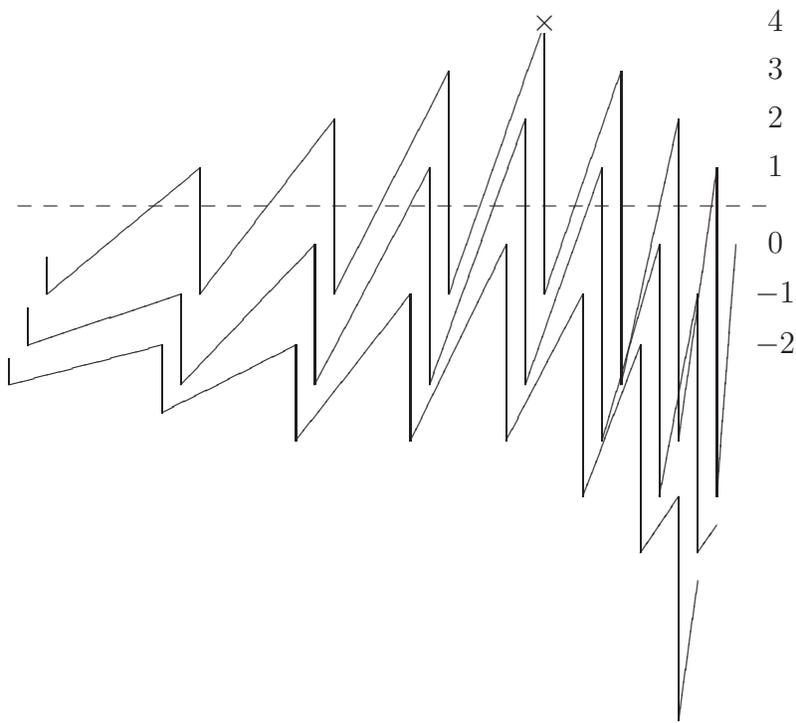

There is, more generally, a family of knots whose knot Floer complexes are the direct sum of a staircase and an acyclic complex. In addition to $L$-space knots, Petkova \cite[Lemma 7]{Petkova} showed that \textquotedblleft Floer homologically thin" knots -- which include alternating and quasi-alternating knots, as well as a family of hyperbolic knots found in \cite{GreeneWatson} -- are in this family (the staircase for a Floer homologically thin knot is the same as that of some $(2,n)$-torus knot).  Since $d$-invariants are defined in terms of non-torsion generators of homology, the acyclic summands have no effect on $d_1$; it is determined solely by the staircase summand. Therefore, if $K_1$ and $K_2$ are knots in this family, then $d_1(K_1\# -K_2)$ can be computed as it was in the above examples for torus knots. In particular, if the staircases of $K_1$ and $K_2$ have different shapes, these knots are not concordant. 

As an interesting further application of these ideas, one could investigate the linear independence of a family of knots in the smooth concordance group. As an example, if we let $T(r,s)_{p,q}$ denote the $(p,q)$-cable of $T(r,s)$, the knots \[K_1 = 
T(2,3)_{2,3}\# T(2,5)\ \text{ and}\ \ K_2= T(2,3)_{2,5}\# T(2,3) \] can be shown to be linearly independent using $d_1$, although \( \Delta_{K_1}(T)=\Delta_{K_2}(T)\) (showing that Corollary \ref{lspacecor} does not extend to sums of $L$-space knots). In contrast, the knots \[ T(2,3)_{2,13}\# T(2,15) \ \ \text{and}\ \  T(2,3)_{2,15}\# T(2,13) \] cannot be distinguished in the concordance group using $d_1$ (i.e.; $d_1$ cannot obstruct the sliceness of the \textquotedblleft Livingston-Melvin" knot \cite{LivingstonMelvin}). So, while Theorem \ref{alexanderconcordance} settles the question of when two $L$-space knots are concordant, it would be interesting to understand which families of $L$-space knots can be shown to be independent using the $d_1$ invariant.

\bibliography{bibCFK2}{}
\bibliographystyle{amsplain}

\end{document}